
\documentclass[11pt]{amsart}
\usepackage{amssymb,amsmath,amsthm,amsfonts,mathrsfs}
\usepackage[hmargin=3cm,vmargin=3.5cm]{geometry}
\usepackage[dvipsnames,table,xcdraw]{xcolor}
\usepackage[colorlinks = true,
            linkcolor = Fuchsia,
            urlcolor  = ForestGreen,
            citecolor = WildStrawberry,
            anchorcolor = blue]{hyperref}

\usepackage{placeins}

\usepackage{stackengine}

\usepackage[all,2cell]{xy} \UseAllTwocells 
\usepackage{tikz-cd}

\usepackage{color}
\usepackage{graphicx,psfrag}
\usepackage{amscd}  
\usepackage{stmaryrd}  %% double square brackets
\usepackage[all,2cell]{xy} \UseAllTwocells \SilentMatrices
\usepackage{tikz}
\usepackage[dvips]{epsfig}
\usepackage{comment}
\usepackage{kbordermatrix}
\usepackage{cancel,scalefnt}
\usepackage{blkarray}
\usepackage{multirow}

\theoremstyle{definition}
\newtheorem{theorem}{Theorem}[section]
\newtheorem{lemma}[theorem]{Lemma}
\newtheorem{remark}[theorem]{Remark}

\newtheorem{proposition}[theorem]{Proposition}

\newtheorem{corollary}[theorem]{Corollary}

\newtheorem{example}[theorem]{Example}

\catcode`~=11 % make LaTeX treat tilde (~) like a normal character
\newcommand{\urltilde}{\kern -.15em\lower .7ex\hbox{~}\kern .04em}  
\catcode`~=13 % revert back to treating tilde (~) as an active character

% Comment out before submission: 
%\usepackage[modulo]{lineno}
%\linenumbers

\usetikzlibrary{arrows,automata}
\usetikzlibrary{decorations.markings}
\usetikzlibrary{decorations.pathmorphing,shapes,decorations.text,shapes.geometric}
\usepackage{multirow}

\newcounter{sarrow}

\usepackage[colorinlistoftodos]{todonotes}

% Set page size and margins
% Replace `letterpaper' with`a4paper' for UK/EU standard size

\title[Eventually constant maps for two sets and nilpotent pairs]{Eventually constant maps for two sets and nilpotent pairs}

\author[Weixi Chen]{Weixi Chen}
\address{Department of Mathematics, Johns Hopkins University, Baltimore, MD 21218, USA}
\email{\href{mailto:wchen159@jh.edu}{wchen159@jh.edu}}

\author[Mee Seong Im]{Mee Seong Im}
\address{Department of Mathematics, Johns Hopkins University, Baltimore, MD 21218, USA}
\email{\href{mailto:meeseong@jhu.edu}{meeseong@jhu.edu}}

\author[Catherine Lillja]{Catherine Lillja}
\address{Department of Mathematics, Johns Hopkins University, Baltimore, MD 21218, USA}
\email{\href{mailto:clillja1@jh.edu}{clillja1@jh.edu}}

\author[Nicolas Rugo]{Nicolas Rugo}
\address{Department of Mathematics, Johns Hopkins University, Baltimore, MD 21218, USA}
\email{\href{mailto:nrugo1@jh.edu}{nrugo1@jh.edu}}

\makeatletter
\@namedef{subjclassname@2020}{%
  \textup{2020} Mathematics Subject Classification}

\subjclass[2020]{Primary: 20F18, 17B08, 20F19, 20D15;
Secondary: 05C05, 05C10, 05C85, 05C20.}
\date{December 4, 2025}

% 20F18 Nilpotent groups
% 17B08 Coadjoint orbits; nilpotent varieties
% 20F19 Generalizations of solvable and nilpotent groups
% 20D15 Finite nilpotent groups, $p$-groups

% 05C05 Trees
% 05C10 Planar graphs; geometric and topological aspects of graph theory 
% 05C85 Graph algorithms (graph-theoretic aspects)
% 05C20 Directed graphs (digraphs), tournaments

\providecommand{\keywords}[1]{\textbf{\textit{Key words and phrases.}} #1}

\keywords{Nilpotent cone, Boolean semiring, finite field, eventually constant maps, nilpotent pairs, balanced vectors.}

\begin{document}

\def\mfb{\mathfrak{b}}

\def\rank{\mathsf{rank}}

\def\Nmn{\mathcal{N}_{m,n}}
\def\NVW{\mathcal{N}(V,W)}

\def\AllPairs{\mathsf{All Pairs}}
\def\E{\mathsf E}
\def\F{\mathbb{F}}
\def\I{\mathsf I}
\def\J{\mathsf J}
\def\M{\mathsf{M}}
\def\R{\mathbb R}
\def\Q{\mathbb Q}
\def\Z{\mathbb Z}
\def\N{\mathbb N}
\def\C{\mathbb C}
\def\S{\mathbb S}
\def\Lin{\mathsf{Lin}}
\def\Nil{\mathsf{Nil}}
\def\SS{\mathbb S}
\def\GL{\mathsf{GL}}
\def\Graph{\mathsf{Graph}}

\def\for{\mathsf{for}}
\def\Hom{\mathsf{Hom}}
\def\End{\mathsf{End}}

\def\Der{\mathsf{Der}}
\def\Pol{\mathsf{Pol}}
\def\Span{\mathsf{Span}}

\newcommand{\dmod}{\mathsf{-mod}}
\newcommand{\comp}{\mathrm{comp}} % components 
\newcommand{\col}{\mathrm{col}}
\newcommand{\adm}{\mathrm{adm}}  % admissible colorings 
\newcommand{\Ob}{\mathrm{Ob}}
\newcommand{\Cob}{\mathsf{Cob}}
\newcommand{\UCob}{\mathsf{UCob}}
\newcommand{\COB}{\mathcal{COB}}
\newcommand{\ECob}{\mathsf{ECob}}
\newcommand{\id}{\mathsf{id}}
\newcommand{\undM}{\underline{M}}
\newcommand{\im}{\mathsf{im}}
\newcommand{\coker}{\mathsf{coker}}
\newcommand{\Aut}{\mathsf{Aut}}
\newcommand{\tripod}{\mathsf{Td}}
\newcommand{\BBC}{\mathbb{B}(\mathcal{C})}
\newcommand{\Pmod}{\mathrm{pmod}}
\newcommand{\gammaoneR}{\gamma_{1,R}}  % decide on notation
\newcommand{\gammaoneRbar}{\overline{\gamma}_{1,R}} % decide on notation
\newcommand{\gammaoneRprime}
{\gamma'_{1,R}}
\newcommand{\gammaoneRbarprime}
{\overline{\gamma}'_{1,R}}
\newcommand{\qbinom}[3]{\genfrac{[}{]}{0pt}{}{#1}{#2}_{#3}}

\def\l{\lbrace}
\def\r{\rbrace}
\def\o{\otimes}
\def\lra{\longrightarrow}
\def\ed{\mathsf{ed}}
\def\Ext{\mathsf{Ext}}
\def\ker{\mathsf{ker}}
\def\mf{\mathfrak} 
\def\mcC{\mathcal{C}}
\def\mcS{\mathcal{S}}  % set of pairs  
\def\mcQC{\mathcal{QC}}
\def\mcA{\mathcal{A}}
\def\mcE{\mathcal{E}}
\def\mcF{\mathcal{F}}
\def\mcN{\mathcal{N}}
\def\Fr{\mathsf{Fr}}  % free module 

\def\bbn{\mathbb{B}^n}
\def\ovb{\overline{b}}
\def\tr{{\sf tr}} 
\def\det{{\sf det }} 
\def\one{\mathbf{1}}   % unit  object of category 
\def\kk{\mathbf{k}}  %% base field  
\def\gdim{\mathsf{gdim}}  %% graded dimension 
\def\rk{\mathsf{rk}}
\def\IET{\mathsf{IET}}
\def\SAF{\mathsf{SAF}}

\newcommand{\indexw}{\R_{>0}} %subscript for weighted foams

\newcommand{\brak}[1]{\ensuremath{\left\langle #1\right\rangle}}
\newcommand{\oplusop}[1]{{\mathop{\oplus}\limits_{#1}}}
\newcommand{\addfigure}{\vspace{0.1in} \begin{center} {\color{red} ADD FIGURE} \end{center} \vspace{0.1in} }
\newcommand{\add}[1]{\vspace{0.1in} \begin{center} {\color{red} ADD FIGURE #1} \end{center} \vspace{0.1in} }
\newcommand{\vspin}{\vspace{0.1in} }

\newcommand\circled[1]{\tikz[baseline=(char.base)]{\node[shape=circle,draw,inner sep=1pt] (char) {${#1}$};}} %shifted dots

% redefine emptyset symbol 
\let\oldemptyset\emptyset
\let\emptyset\varnothing

%%%%% Offset in TOC
\let\oldtocsection=\tocsection
\let\oldtocsubsection=\tocsubsection
\renewcommand{\tocsection}[2]{\hspace{0em}\oldtocsection{#1}{#2}}
\renewcommand{\tocsubsection}[2]{\hspace{1em}\oldtocsubsection{#1}{#2}}

\renewcommand{\kbldelim}{(}% Left delimiter
\renewcommand{\kbrdelim}{)}% Right delimiter

% to insert comments 
\def\MK#1{{\color{red}[MK: #1]}}
\def\bfred#1{{\color{red}#1}}

%\pgfdeclarelayer{background}
%\pgfdeclarelayer{foreground}
%\pgfsetlayers{background, foreground}
%\input{foamstyles.tikzstyles}

\begin{abstract}
We give a bijective correspondence between the number of nilpotent matrices over a Boolean semiring and the number of directed acyclic graphs on ordered vertices. 
We then enumerate pairs of maps between two finite sets whose composites are eventually constant by forming a bijection that relates a pair of such maps with a spanning tree in a complete bipartite graph, and an edge of said tree. This generalizes the main principle of A. Joyal's proof of Cayley's formula.
Finally, we generalize T. Leinster's work by considering a pair of finite-dimensional vector spaces and show a bijectivity between a nilpotent pair of maps and a balanced vector with the hom spaces between them. This leads us to an elegant formula for the number of nilpotent pairs.
\end{abstract}

\maketitle
\tableofcontents

%%%%%%%%%%%%%%%%%%%%
%
% Introduction 
%
%%%%%%%%%%%%%%%%%%%%

\section{Introduction}
\label{sec_intro} 
The geometry, combinatorics and topology of the resolution of the nilpotent cone are central in geometric and combinatorial representation theory and low-dimensional topology. 
They provide profound and important geometric and algebraic structures in the study of nilpotent elements in Lie (super)algebras and representation theory.
The nilpotent elements appear in the moduli space of vector bundles on a Riemann surface (as nilpotent endomorphisms or Higgs fields), as a singular and  reducible space~\cite{HH22,FSS18,BGGH18}.
The nilpotent cone also gives rise to the theory of Grothendieck--Springer and Springer resolutions~\cite{CG97,Spr76_trig,Ste76,Gin97_geom,Dol84,MR3836769,MR3312842,Im_Scrimshaw_parabolic}, as the Springer fibers of the 2-Jordan block have deep connections to 2-row standard Young diagrams, as well as crossingless cups and rays~\cite{SW12,ILW_Proc_AMS,ILWquiver,ILW19}.

Let $X$ be an $n$-dimensional vector space over a field $k$. T.~Leinster in \cite{Lei21} proved the existence of a bijection between $\mathcal{N}(X)\times X$ and $\End_k(X)$, where $\mathcal{N}(X)$ is the set of nilpotent elements on $X$. Working over a finite field, where $|k|= q$, the result specializes to a theorem of Fine--Herstein \cite[Theorem 1]{FH58}, which states that the number of $n\times n$ nilpotent matrices is $q^{n(n-1)}$, which equals $|X|^{n-1}$. This is equivalent to the probability that a random matrix be nilpotent is $q^{-n}$. Leinster's argument requires very little calculation, and exposes a deep, structural pattern.

Fine--Herstein in \cite[Theorem 2]{FH58} enumerate the number of nilpotent matrices over rings of the form $\mathbb{Z}_a$, where $a$ is any positive integer. One can prove the crux of Fine--Herstein, which is finding nilpotent matrices over $\mathbb{F}_p$, using Leinster's elegant, computation-free techniques~\cite{Lei21}.

Our main theorems in this paper are enumerating the eventually constant pairs of maps between two finite sets (Theorem~\ref{thm_num_eventually_const_pairs}), showing a bijective correspondence between a nilpotent pair on two finite-dimensional vector spaces and a balanced vector (which we define in Section~\ref{subsection_nilp_pairs_bal_vec}) and their hom spaces  (Theorem~\ref{thm_gen_nilpotent_two_vs}), and enumerating the number of nilpotent pairs (Theorem~\ref{thm_dim_nilpotent_pairs}).  
The last two theorems in Section~\ref{section_pairs_maps_nilp_finite_field} generalize the result and approach of T.~Leinster to counting nilpotent endomorphisms of a vector space over a finite field.

We now give an overview of the paper. In Section~\ref{section_enumeration_nil_Bool_semiring}, we prove that $n\times n$ nilpotent matrices over a Boolean semiring is enumerated by the number of directed acyclic graphs on $n$ ordered vertices (Proposition~\ref{prop_nilp_bool_semiring}). In Section~\ref{subsection_backgr_graph_thy}, we give a brief background on graph theory, and in Section~\ref{subsect_enum_eventually_const}, we enumerate eventually constant pairs of functions (Theorem~\ref{thm_num_eventually_const_pairs}). A related number appears in the paper~\cite{DK25}, i.e., given a pair $(f,g)$ of eventually constant maps on sets $X$ and $Y$ and removing one of the two oriented edges between fixed points $x_0,y_0$ in $X, Y$, respectively, gives us a spanning tree in the complete bipartite graph $K(m,n)$. The number of spanning trees in $K(m,n)$ is $m^{n-1} n^{m-1}$, but our count is a bit different.

In Section~\ref{subsection_lin_algebra}, we provide the necessary linear algebra for pairs of vector spaces and then we give an explicit bijection between nilpotent pairs and a balanced vector with the hom spaces between the two vector spaces over any field in Section~\ref{subsection_nilp_pairs_bal_vec} (Theorem~\ref{thm_gen_nilpotent_two_vs}). In Section~\ref{subsection_nilp_pairs_bal_fin_field}, we enumerate nilpotent pairs over a finite field (Theorem~\ref{thm_nilpotent_pairs_fin_field}), which simplifies to be an elegant and simple expression (Theorem~\ref{thm_dim_nilpotent_pairs}).
In Section~\ref{subsection_nilp_pairs_bal_vec_char_q}, we enumerate the nilpotent pairs with length $\ell$ balanced vectors (Theorem~\ref{thm_cardinality_nilp_triple}). In Section~\ref{subsection_limiting_case}, we study the limit of the probability of a nilpotent pair when the dimension of one of the vector spaces is fixed and as the dimension of the other tends towards infinity (Theorem~\ref{thm_m_fixed_n_large}), as well as the limit as both dimensions tend towards infinity (Proposition~\ref{prop_m_equals_n_large}). 
In Section~\ref{section_appendix}, we provide two \texttt{Mathematica} codes that produce all nilpotent matrices over the Boolean semiring.

This is a slight extension of \cite{CIKLR25}. Complete proofs as well as detailed examples are provided throughout this paper.

\section*{Acknowledgments}
The authors are grateful to Mikhail Khovanov for helpful suggestions and guidance. The authors would also like to recognize Haihan Wu and Matthew Hamil for constructive conversations. The authors would like to thank the Department of Mathematics and Erin Kathleen Rowe and Jasmine SharDae' Jenkins at the Dean's Office of Krieger School of Arts $\&$ Sciences at Johns Hopkins University for their support. The authors are partially supported by Simons Collaboration Award 994328.

\section{Nilpotent maps over \texorpdfstring{$\mathbb{B}$}{B}}
\label{section_enumeration_nil_Bool_semiring}

Let $\mathbb{B} = \{ 0,1: 1+1 = 1\}$, the Boolean semiring. The number of idempotents in $\M_n(\mathbb{B})$ was proved in \cite{Butler1972}. We give a similar result for $\M_n(\mathbb{B})$ by counting the number of nilpotent matrices. Let $\mcN_n(\mathbb{B})$ be the set of nilpotent matrices over $\mathbb{B}$.

\begin{lemma}
Let $A = (A_{ij})\in \mcN_n(\mathbb{B})$. Then the diagonal coordinates of $A$ all equal 0. If the coordinate $A_{ij}=1$, then $A_{ji}=0$. A Boolean matrix is nilpotent if and only if  there is no sequence 
$i_1$, $i_2$, $\ldots$, $i_k$ satisfying $A_{i_1,i_2}$ $=$ $A_{i_2,i_3}$ $=$ $\ldots$ $=$ $A_{i_k,i_1}=1$.  
\end{lemma}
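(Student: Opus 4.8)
The plan is to identify a Boolean matrix $A$ with the adjacency matrix of a directed graph $G_A$ on vertex set $\{1,\dots,n\}$, where we draw an edge $i\to j$ precisely when $A_{ij}=1$. Under this dictionary, powers of $A$ compute reachability: a standard induction shows that $(A^k)_{ij}=1$ if and only if there is a directed walk of length exactly $k$ from $i$ to $j$ in $G_A$, using the Boolean arithmetic $1+1=1$ so that the $(i,j)$ entry of $A^k$ is $1$ as soon as \emph{some} intermediate sequence $i=i_0, i_1,\dots,i_k=j$ has all $A_{i_0 i_1}=\cdots=A_{i_{k-1}i_k}=1$. Thus $A$ is nilpotent, i.e. $A^k=0$ for some $k$, exactly when $G_A$ has no directed walks of sufficiently large length, which by finiteness of the vertex set is equivalent to $G_A$ having no directed closed walk, equivalently no directed cycle.

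First I would prove the walk-counting claim: by induction on $k$, with the base case $k=1$ being the definition of $G_A$, and the inductive step using $(A^{k+1})_{ij}=\sum_\ell (A^k)_{i\ell}A_{\ell j}$, where the Boolean sum is $1$ iff at least one term is $1$, i.e. iff there is a length-$k$ walk from $i$ to some $\ell$ with an edge $\ell\to j$. Next I would argue the equivalence ``$A$ nilpotent $\iff$ $G_A$ acyclic.'' If $G_A$ contains a directed cycle $i_1\to i_2\to\cdots\to i_k\to i_1$, then there are arbitrarily long closed walks at $i_1$, so $(A^{mk})_{i_1 i_1}=1$ for all $m\ge 1$ and no power of $A$ vanishes. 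Conversely, if $G_A$ is acyclic, then any walk that repeats a vertex would contain a cycle, so every walk has length at most $n-1$; hence $A^n$ has all entries $0$ (there are no walks of length $n$), so $A$ is nilpotent. This establishes the third (iff) assertion, which is the heart of the lemma; the sequence $i_1,\dots,i_k$ with $A_{i_1 i_2}=\cdots=A_{i_k i_1}=1$ is precisely a directed cycle in $G_A$ (note repetitions among the $i_j$ are allowed but any such closed walk contains a genuine cycle, so the two formulations coincide).

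The first two assertions are then immediate corollaries. The diagonal entries vanish because $A_{ii}=1$ would be a loop, i.e. a cycle of length $1$, contradicting acyclicity (equivalently $(A)_{ii}=1$ forces $(A^m)_{ii}=1$ for all $m$). Similarly, if $A_{ij}=1$ and $A_{ji}=1$ with $i\neq j$, then $i\to j\to i$ is a $2$-cycle, again contradicting nilpotence; so $A_{ij}=1$ implies $A_{ji}=0$. I would present these after the main equivalence so they follow in one line each.

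I do not anticipate a serious obstacle: the only mild subtlety is being careful about Boolean versus ordinary arithmetic when expanding matrix products (so that ``there exists an intermediate path'' rather than ``the number of paths'' is what the entry records), and about the distinction between closed walks and simple cycles (handled by the remark that any closed walk of positive length contains a cycle, and any walk of length $\ge n$ repeats a vertex). Both points are routine once flagged. The statement will also set up the bijection with directed acyclic graphs on $n$ ordered vertices used in Proposition~\ref{prop_nilp_bool_semiring}, since the map $A\mapsto G_A$ is a bijection from $n\times n$ Boolean matrices to directed graphs on $\{1,\dots,n\}$ (allowing loops), restricting to a bijection between $\mcN_n(\mathbb{B})$ and the acyclic such graphs.
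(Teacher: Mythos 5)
Your proof is correct, but it takes a genuinely different route from the paper. The paper's proof of this lemma works directly with matrix entries in the Boolean semiring: it shows by induction that if $A_{ii}=1$ then $(A^k)_{ii}=1$ for all $k$, and similarly that $A_{ij}=A_{ji}=1$ forces $(A^k)_{ii}=1$, treating the even and odd cases of $k-1$ separately; notably, the paper's proof addresses only the first two assertions of the lemma, and the cycle criterion (the ``if and only if'' clause) is in effect established only later, in the proof of Proposition~\ref{prop_nilp_bool_semiring}, via the adjacency-matrix correspondence with directed acyclic graphs. You instead prove the graph-theoretic dictionary up front --- $(A^k)_{ij}=1$ exactly when there is a directed walk of length $k$, hence $A$ is nilpotent exactly when the associated digraph is acyclic --- and then obtain the vanishing diagonal and the impossibility of $A_{ij}=A_{ji}=1$ as one-line corollaries (loops and $2$-cycles). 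Your treatment is more unified: it proves all three assertions at once, handles cleanly the distinction between closed walks and simple cycles, and essentially pre-proves the content of Proposition~\ref{prop_nilp_bool_semiring}, at the cost of importing graph language before the paper introduces it; the paper's entry-wise computation is more elementary and self-contained for the two structural facts it actually verifies, but leaves the cycle characterization to the subsequent proposition. One small point worth making explicit in your cycle-implies-non-nilpotent direction: from $(A^{mk})_{i_1 i_1}=1$ for all $m$ you should note that $A^N=0$ would force $A^{N'}=0$ for all $N'\ge N$, so no power can vanish; this is immediate but is the step that converts ``nonzero entries at multiples of $k$'' into genuine non-nilpotence.
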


\begin{proof}
Let $A \in \mathcal{N}(\mathbb{B})$. For a contradiction, suppose $A_{ii}=1$.
If $A^2=0$, then $(A^2)_{ii}$ is of the form $\sum_{j=1}^{n} A_{ij}A_{ji} = A_{ii}^2 + \text{other terms} = 1+ \text{other terms}=1$ since we are working over a Boolean semiring. This is a contradiction, so $A_{ii}$ must be zero. Now if $A^k=0$ for some $k>0$, then 
$(A^k)_{ii} 
= \sum_{j=1}^{n} (A^{k-1})_{ij} A_{ji} 
= (A^{k-1})_{ii} A_{ii} + \mbox{other terms}$. If $A_{ii}\not=0$, then $(A^{k-1})_{ii}\not=0$ by complete induction. This is a contradiction.
So $A_{ii}=0$. 

Now let $A_{ij}=1$, where $i<j$, and suppose $A_{ji}=1$ also. Then $(A^k)_{ii} = \sum_{l=1}^{n} (A^{k-1})_{i l}A_{l i}$. 
If $k-1$ is odd, then this sum is equal to $A_{ij}A_{ji}+ \text{other terms} = 1$. 
If $k-1$ is even, then the $(i,i)$-coordinate of $A^{k-1}$ is $1$. So the sum is equal to $A_{ii}^2 + \text{other terms}=1$. 
Since this is true for an arbitrary $k$, we see that $A$ cannot be a nilpotent matrix. Thus $A_{ji}$ must be $0$.
\end{proof}

\begin{proposition}
\label{prop_nilp_bool_semiring}
The set of $n\times n$ nilpotent matrices $\mathcal{N}_n(\mathbb{B})$ 
    over a Boolean semiring is enumerated by the number of directed acyclic graphs on $n$ ordered vertices.
\end{proposition}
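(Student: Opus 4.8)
The plan is to set up an explicit bijection between $\mathcal{N}_n(\mathbb{B})$ and the set of directed acyclic graphs (DAGs) on the vertex set $\{1, 2, \ldots, n\}$, using the characterization of nilpotent Boolean matrices established in the previous lemma. To a Boolean matrix $A = (A_{ij})$ we associate the directed graph $G_A$ on $n$ ordered vertices with an edge $i \to j$ precisely when $A_{ij} = 1$; this is plainly a bijection between $\M_n(\mathbb{B})$ and all directed graphs on $n$ ordered vertices (with no multiple edges, but loops and $2$-cycles allowed). So the only thing to check is that, under this correspondence, $A$ is nilpotent if and only if $G_A$ is acyclic.

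The key observation making this work is that, over $\mathbb{B}$, the $(i,j)$-entry of $A^k$ is $1$ if and only if there exists a directed walk $i = i_0 \to i_1 \to \cdots \to i_k = j$ of length exactly $k$ in $G_A$ — this is just the Boolean version of the standard fact that powers of an adjacency matrix count walks, with ``count'' replaced by ``exists.'' I would first record this as a short induction on $k$. Given this, the previous lemma already tells us that $A$ nilpotent is equivalent to $G_A$ having no closed walk, i.e., no sequence $i_1, i_2, \ldots, i_k$ with $A_{i_1 i_2} = \cdots = A_{i_k i_1} = 1$; and having no closed walk is exactly the same as having no directed cycle (a shortest closed walk is automatically a cycle, and loops and $2$-cycles are the degenerate cases already excluded). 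Thus $A \in \mathcal{N}_n(\mathbb{B})$ if and only if $G_A$ is a DAG.

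For completeness I would also note the finiteness direction cleanly: if $G_A$ is acyclic, then any directed walk visits each vertex at most once, so walks have length at most $n-1$, whence $A^n = 0$ and $A$ is genuinely nilpotent (not merely ``has no cyclic walk''). Conversely, a directed cycle in $G_A$ gives arbitrarily long closed walks, so $(A^k)_{ii} = 1$ for infinitely many $k$ and $A$ is not nilpotent. This closes the loop between the combinatorial condition and actual nilpotency.

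I do not expect a serious obstacle here — the content is essentially packaged in the preceding lemma, and the remaining work is the elementary walk-counting induction plus the observation that ``no closed walk'' $=$ ``no cycle.'' The one place to be slightly careful is bookkeeping about what ``DAG on ordered vertices'' is allowed to contain: since we want a bijection with \emph{all} of $\mathcal{N}_n(\mathbb{B})$ and a Boolean matrix has entries indexed by ordered pairs, the DAGs in question are simple directed graphs (at most one edge in each direction between a pair, and acyclicity rules out loops and $2$-cycles automatically), so no quotient by graph isomorphism is taken — the vertices are labelled $1, \ldots, n$. Making this explicit in the statement of the bijection is the only subtlety.
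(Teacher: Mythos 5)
Your proposal is correct and follows essentially the same route as the paper: identify nilpotent Boolean matrices with adjacency matrices of labeled directed graphs, observe that Boolean powers of $A$ record the existence of walks of a given length, and conclude that acyclicity forces all sufficiently high powers to vanish while a cycle yields arbitrarily long closed walks and hence non-nilpotency. Your extra care in distinguishing walks from paths and in noting that the vertices are labeled (no quotient by isomorphism) only tightens the same argument.
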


We will write directed acyclic graphs as DAGs.

\begin{proof}
    We show a bijective correspondence between DAGs and nilpotent matrices over $\mathbb{B}$.

    Let $\Gamma$ be a DAG. Label the vertices of $\Gamma$ from $1$ to $n$. Define $A = (A_{ij})$ to be the $n\times n$ matrix such that: 
    \begin{align*}
        A_{ij}= 
            \begin{cases}
                1 &\text{if there is a directed edge from vertex }j\text{ to }i, \\
                0 &\text{otherwise}.
            \end{cases}
    \end{align*}
    This is called the adjacency matrix of $\Gamma$, and is well-known to be a bijection.
    
    Consider $(A^2)_{ij} = \sum_{k=1}^{n}A_{i k}A_{kj}$. Then $(A^2)_{ij}=1$ if and only if there exists at least one path from $j$ to $i$ of length $2$. Working inductively, $(A^n)_{ij}=\sum_{k=1}^n (A^{n-1})_{ik}A_{kj}=1$ if and only if there exists at least one path from $j$ to $i$ of length $n$. Note that a DAG cannot have an infinite length path. Therefore there exists an $N \gg 0 $ such that $(A^N)_{ij}=0$ for each $1\le i,j\le n$. 

    Conversely, suppose there is a cycle in $\Gamma$. Then we can get a path of arbitrary length by tracing the cycle. This will produce a nonzero entry in $A^n$ for any positive integer $n$, so $A$ cannot be nilpotent.
\end{proof}

\begin{remark}
The codes in Figures~\ref{fig_code_boolean} and \ref{fig_nilpotent_mat_only} in Appendix~\ref{section_appendix} provide the enumeration of the cardinality of $\mathcal{N}_n(\mathbb{B})$. 
\end{remark}

\begin{lemma}
\label{lemma_Boolean_nilpotent}
The enumeration in Proposition~\ref{prop_nilp_bool_semiring} follows the number of directed acyclic digraphs (or DAGs) with $n$ labeled vertices.  Since this number follows the recurrence relation: 
\begin{equation}
\label{eqn_recurrence_reln_nilpot_Bool}
a_0=1  
\quad
\mbox{ and } 
\quad 
a_n  =  
\displaystyle{\sum_{k=1}^{n}}   (-1)^{k-1} 
\binom{n}{k} 2^{k(n-k)}
a_{n-k},
\end{equation}
the number of $n\times n$ nilpotent matrices over a Boolean semiring is $a_n$. 
\end{lemma}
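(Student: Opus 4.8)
The statement to prove is that the number of $n\times n$ nilpotent Boolean matrices equals $a_n$, where $a_n$ satisfies the recurrence $a_0 = 1$ and $a_n = \sum_{k=1}^n (-1)^{k-1}\binom{n}{k}2^{k(n-k)}a_{n-k}$. By Proposition~\ref{prop_nilp_bool_semiring} we already know that $|\mathcal{N}_n(\mathbb{B})|$ equals the number of labeled DAGs on $n$ vertices, so everything reduces to establishing that the labeled-DAG count satisfies the quoted recurrence. The plan is to prove this recurrence by an inclusion–exclusion argument on the set of \emph{sources} (vertices with no incoming edge) of a DAG, which is the classical derivation due to Robinson (1973).

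The key steps are as follows. First, observe that every finite DAG has at least one source; more precisely, if we write $D_n$ for the number of labeled DAGs on the vertex set $\{1,\dots,n\}$, we will count DAGs weighted by choosing a distinguished nonempty set $S$ of vertices that are \emph{required to be sources}. Second, fix a $k$-element subset $S\subseteq\{1,\dots,n\}$. A digraph on $\{1,\dots,n\}$ in which every vertex of $S$ is a source, and whose restriction to the complement $T = \{1,\dots,n\}\setminus S$ (of size $n-k$) is an arbitrary DAG, is obtained by: choosing the DAG structure on $T$ (there are $a_{n-k} = D_{n-k}$ ways), and then freely choosing, for each of the $k$ vertices of $S$ and each of the $n-k$ vertices of $T$, whether there is an edge from $S$ to $T$ (there are $2^{k(n-k)}$ such choices), with no edges allowed into $S$. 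Crucially, any such digraph is automatically acyclic, since a cycle would have to revisit $S$ but no edge enters $S$. Third, I would run inclusion–exclusion: the number of DAGs on $\{1,\dots,n\}$ with \emph{at least} the vertices of some chosen nonempty $S$ being sources, summed with signs $(-1)^{|S|-1}$ over all nonempty $S$, counts each DAG exactly once — because for a DAG with source set $R\neq\emptyset$, the alternating sum $\sum_{\emptyset\neq S\subseteq R}(-1)^{|S|-1} = 1$. Grouping by $k = |S|$ and using that the count depends only on $k$ gives exactly $D_n = \sum_{k=1}^n (-1)^{k-1}\binom{n}{k}2^{k(n-k)}D_{n-k}$, which is the recurrence~\eqref{eqn_recurrence_reln_nilpot_Bool}; the base case $D_0 = 1$ (the empty digraph) matches $a_0 = 1$, and by induction $D_n = a_n$ for all $n$.

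The main subtlety — and the step to state carefully rather than the main obstacle — is the justification that in the inclusion–exclusion one must \emph{forbid} all edges into $S$ (not merely require the vertices of $S$ to be sources among themselves): this is what makes each summand a clean product $\binom{n}{k}2^{k(n-k)}a_{n-k}$ and simultaneously guarantees acyclicity of the constructed digraph for free. Once this bookkeeping is set up, the identity $\sum_{\emptyset\neq S\subseteq R}(-1)^{|S|-1}=1$ for any nonempty finite set $R$ (a restatement of $\sum_{j=1}^{|R|}(-1)^{j-1}\binom{|R|}{j}=1$) closes the argument. I would therefore present the proof as: (i) recall $|\mathcal{N}_n(\mathbb{B})| = D_n$ from Proposition~\ref{prop_nilp_bool_semiring}; (ii) set up the source-based inclusion–exclusion; (iii) verify the product count for a fixed source set and its acyclicity; (iv) collapse the alternating sum and read off the recurrence; (v) conclude by induction that $|\mathcal{N}_n(\mathbb{B})| = a_n$.
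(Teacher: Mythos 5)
Your argument is correct, but it takes a genuinely different route from the paper: the paper's entire proof of this lemma is a citation to the OEIS entry \cite{oeisA003024} (Robinson's enumeration of labeled DAGs), relying on Proposition~\ref{prop_nilp_bool_semiring} for the identification with Boolean nilpotents, whereas you supply the actual derivation of the recurrence via inclusion--exclusion over source sets. Your bookkeeping is right: requiring every vertex of a $k$-subset $S$ to be a source is exactly forbidding all edges into $S$, the $2^{k(n-k)}$ free edges from $S$ to its complement together with an arbitrary DAG on the remaining $n-k$ vertices produce automatically acyclic digraphs, and the identity $\sum_{\emptyset\neq S\subseteq R}(-1)^{|S|-1}=1$ for the (nonempty) true source set $R$ collapses the alternating sum to count each DAG once, yielding $a_n=\sum_{k=1}^n(-1)^{k-1}\binom{n}{k}2^{k(n-k)}a_{n-k}$ with $a_0=1$. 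The only point you invoke without justification is that every DAG on $n\ge 1$ vertices has at least one source; a one-line remark (the initial vertex of a maximal directed path has no in-edges, else the path extends or a cycle appears) would close that. The trade-off is clear: the paper's citation keeps the lemma short and defers to known literature, while your version makes the lemma self-contained and exposes where acyclicity is actually used, at the cost of reproducing a classical argument.
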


\begin{proof}
See \cite{oeisA003024} for more detail.
\end{proof}

\begin{corollary}
The probability that an $n\times n$ operator is nilpotent over $\mathbb{B}$ is $a_n/2^{n^2}$, where $a_n$ is given in~\eqref{eqn_recurrence_reln_nilpot_Bool}.
\end{corollary}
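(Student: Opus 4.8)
The plan is to combine a trivial total count with the two results already established. First I would observe that $\M_n(\mathbb{B})$ is just the set of functions from the $n^2$ matrix positions to the two-element set $\mathbb{B} = \{0,1\}$, so $|\M_n(\mathbb{B})| = 2^{n^2}$; this is the size of the ambient sample space once we equip it with the uniform distribution, which is the natural meaning of ``a random $n\times n$ operator over $\mathbb{B}$.''

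Next I would recall that Proposition~\ref{prop_nilp_bool_semiring} exhibits a bijection between $\mathcal{N}_n(\mathbb{B})$ and the set of DAGs on $n$ ordered (labeled) vertices, and that Lemma~\ref{lemma_Boolean_nilpotent} identifies the cardinality of that set with $a_n$, the solution of the recurrence~\eqref{eqn_recurrence_reln_nilpot_Bool} (equivalently, OEIS sequence \href{https://oeis.org/A003024}{A003024}). Hence $|\mathcal{N}_n(\mathbb{B})| = a_n$.

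Finally, the probability that a uniformly random element of $\M_n(\mathbb{B})$ lies in $\mathcal{N}_n(\mathbb{B})$ is the ratio of these two counts, namely $a_n / 2^{n^2}$, which is the claimed formula. There is essentially no obstacle here: the only points worth stating explicitly are (i) that ``random operator'' is taken with respect to the uniform measure on $\M_n(\mathbb{B})$, and (ii) that the entrywise description gives $|\M_n(\mathbb{B})| = 2^{n^2}$ rather than something smaller (the Boolean relation $1+1=1$ affects multiplication, not the number of matrices). So the corollary is an immediate consequence of Proposition~\ref{prop_nilp_bool_semiring} and Lemma~\ref{lemma_Boolean_nilpotent}, and the write-up can be kept to a couple of sentences.
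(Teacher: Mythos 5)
Your argument is correct and matches the paper's own proof: the paper likewise divides $|\mathcal{N}_n(\mathbb{B})| = a_n$ (from Proposition~\ref{prop_nilp_bool_semiring} and Lemma~\ref{lemma_Boolean_nilpotent}) by the total count $2^{n^2}$ of Boolean $n\times n$ matrices. Your additional remarks about the uniform measure and the fact that the Boolean relation $1+1=1$ does not change the count of matrices are fine clarifications but not needed.
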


\begin{proof}
Since there are $2^{n^2}$ Boolean-valued $n\times n$ matrices, the result follows by Lemma~\ref{lemma_Boolean_nilpotent}.
\end{proof}

\begin{remark}
Nilpotent endomorphisms of Boolean free (semi)modules (vector spaces) correspond to DAGs.
\end{remark}

\begin{example}
Since the recurrence relation in Lemma~\ref{lemma_Boolean_nilpotent} follows the sequence $a_0 = 1$, $a_1 = 1$, $a_2 = 3$, $a_3 = 25$, $a_4 = 543$, $a_5 = 29,281$, $a_6 = 3,781,503$, 
    there are one $1\times 1$ nilpotent matrix over $\mathbb{B}$ (namely, the zero matrix), three $2\times 2$ nilpotent matrices over $\mathbb{B}$:
\[
\begin{pmatrix}
0 & 0 \\ 
0 & 0 \\ 
\end{pmatrix}, 
\qquad 
\begin{pmatrix}
0 & 0 \\ 
1 & 0 \\ 
\end{pmatrix}, 
\qquad 
\begin{pmatrix}
0 & 1 \\ 
0 & 0 \\ 
\end{pmatrix},
\]
and twenty-five $3\times 3$ nilpotent operators in $\mathcal{N}(\mathbb{B})$:  
\begin{align*}
&\begin{pmatrix}
0 & 1 & 0 \\ 
0 & 0 & 0 \\ 
0 & 0 & 0 \\ 
\end{pmatrix},
        \hspace{2mm}
\begin{pmatrix}
0 & 0 & 0 \\ 
0 & 0 & 1 \\ 
0 & 0 & 0 \\ 
\end{pmatrix},
        \hspace{2mm}
\begin{pmatrix}
0 & 0 & 1 \\ 
0 & 0 & 0 \\ 
0 & 0 & 0 \\ 
\end{pmatrix},
        \hspace{2mm}
\begin{pmatrix}
0 & 1 & 1 \\ 
0 & 0 & 0 \\ 
0 & 0 & 0 \\ 
\end{pmatrix},
        \hspace{2mm}
\begin{pmatrix}
0 & 1 & 0 \\ 
0 & 0 & 1 \\ 
0 & 0 & 0 \\ 
\end{pmatrix},
        \hspace{2mm}
\begin{pmatrix}
0 & 0 & 1 \\ 
0 & 0 & 1 \\ 
0 & 0 & 0 \\ 
\end{pmatrix}, 
    \\ 
&\begin{pmatrix}
0 & 0 & 0 \\ 
1 & 0 & 0 \\ 
0 & 0 & 0 \\ 
\end{pmatrix},
        \hspace{2mm}
\begin{pmatrix}
0 & 0 & 0 \\ 
0 & 0 & 0 \\ 
0 & 1 & 0 \\ 
\end{pmatrix},
        \hspace{2mm}
\begin{pmatrix}
0 & 0 & 0 \\ 
0 & 0 & 0 \\ 
1 & 0 & 0 \\ 
\end{pmatrix},
        \hspace{2mm}
\begin{pmatrix}
0 & 0 & 0 \\ 
1 & 0 & 0 \\ 
1 & 0 & 0 \\ 
\end{pmatrix},
        \hspace{2mm}
\begin{pmatrix}
0 & 0 & 0 \\ 
1 & 0 & 0 \\ 
0 & 1 & 0 \\ 
\end{pmatrix},
        \hspace{2mm}
\begin{pmatrix}
0 & 0 & 0 \\ 
0 & 0 & 0 \\ 
1 & 1 & 0 \\ 
\end{pmatrix}, 
    \\ 
&\begin{pmatrix}
0 & 1 & 0 \\ 
0 & 0 & 0 \\ 
1 & 0 & 0 \\ 
\end{pmatrix},
        \hspace{2mm}
\begin{pmatrix}
0 & 0 & 0 \\ 
0 & 0 & 1 \\ 
1 & 0 & 0 \\ 
\end{pmatrix},
        \hspace{2mm}
\begin{pmatrix}
0 & 0 & 1 \\ 
1 & 0 & 0 \\ 
0 & 0 & 0 \\ 
\end{pmatrix},
        \hspace{2mm}
\begin{pmatrix}
0 & 0 & 1 \\ 
0 & 0 & 0 \\ 
0 & 1 & 0 \\ 
\end{pmatrix},
        \hspace{2mm}
\begin{pmatrix}
0 & 1 & 0 \\ 
0 & 0 & 0 \\ 
0 & 1 & 0 \\ 
\end{pmatrix},
        \hspace{2mm}
\begin{pmatrix}
0 & 0 & 0 \\ 
1 & 0 & 1 \\ 
0 & 0 & 0 \\ 
\end{pmatrix},
    \\
&\begin{pmatrix}
0 & 1 & 1 \\ 
0 & 0 & 1 \\ 
0 & 0 & 0 \\ 
\end{pmatrix}, 
\begin{pmatrix}
0 & 0 & 0 \\ 
1 & 0 & 0 \\ 
1 & 1 & 0 \\ 
\end{pmatrix}, 
\begin{pmatrix}
0 & 1 & 1 \\ 
0 & 0 & 0 \\ 
0 & 1 & 0 \\ 
\end{pmatrix},
\begin{pmatrix}
0 & 0 & 1 \\ 
1 & 0 & 1 \\ 
0 & 0 & 0 \\ 
\end{pmatrix},
\begin{pmatrix}
0 & 0 & 0 \\ 
1 & 0 & 1 \\ 
1 & 0 & 0 \\ 
\end{pmatrix},
\begin{pmatrix}
0 & 1 & 0 \\ 
0 & 0 & 0 \\ 
1 & 1 & 0 \\ 
\end{pmatrix},
\begin{pmatrix}
0 & 0 & 0 \\ 
0 & 0 & 0 \\ 
0 & 0 & 0 \\ 
\end{pmatrix}.
\end{align*}
The directed acyclic graphs corresponding to $n=1$ and $n=2$ are given in Figure~\ref{fig_00001} left and right, respectively.
\begin{figure}
    \centering
\begin{tikzpicture}[scale=0.5,decoration={
    markings,
    mark=at position 0.5 with {\arrow{>}}}]
\begin{scope}[shift={(0,0)}]
%\draw[thin,yellow] (0,0) grid (4,4);

\node at (0,2.65) {$1$};

\draw[thick,fill] (0.2,2) arc (0:360:2mm);

\end{scope}

\begin{scope}[shift={(6.5,0)}]
%\draw[thin,yellow] (0,0) grid (4,4);

\node at (0,2.65) {$1$};

\draw[thick,fill] (0.2,2) arc (0:360:2mm);

\node at (2,2.65) {$2$};

\draw[thick,fill] (2.2,2) arc (0:360:2mm);

\end{scope}

\begin{scope}[shift={(13,0)}]
%\draw[thin,yellow] (0,0) grid (4,4);

\node at (0,2.65) {$1$};

\draw[thick,fill] (0.2,2) arc (0:360:2mm);

\node at (2,2.65) {$2$};

\draw[thick,fill] (2.2,2) arc (0:360:2mm);

\draw[thick,->] (0.5,2) -- (1.5,2);

\end{scope}

\begin{scope}[shift={(19.5,0)}]
%\draw[thin,yellow] (0,0) grid (4,4);

\node at (0,2.65) {$1$};

\draw[thick,fill] (0.2,2) arc (0:360:2mm);

\node at (2,2.65) {$2$};

\draw[thick,fill] (2.2,2) arc (0:360:2mm);

\draw[thick,<-] (0.5,2) -- (1.5,2);

\end{scope}

\end{tikzpicture}
    \caption{There are $1$ directed acyclic graph (DAG) on $1$ (ordered) vertex, and $3$ DAGs on $2$ ordered vertices.}
    \label{fig_00001}
\end{figure}
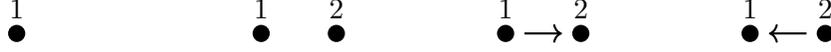
The DAGs associated to the elements in $\mathcal{N}_3(\mathbb{B})$ for $n=3$ are given in Figure~\ref{fig_00002}.
\end{example}

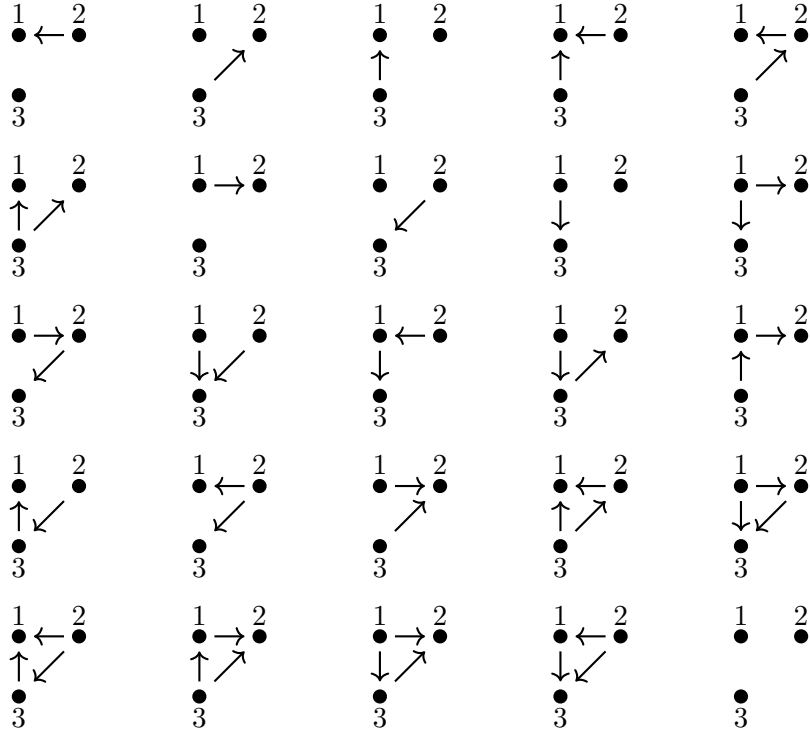
\begin{figure}
    \centering
\begin{tikzpicture}[scale=0.40,decoration={
    markings,
    mark=at position 0.5 with {\arrow{>}}}]

%%%%%%%%%%%%%%%%%%%%%%%%%%%%%%%%%%%%%%%%%%%%%%%%%

\begin{scope}[shift={(0,0)}]
%\draw[thin,yellow] (0,0) grid (4,4);

\node at (0,2.70) {$1$};

\draw[thick,fill] (0.2,2) arc (0:360:2mm);

\node at (2,2.70) {$2$};

\draw[thick,fill] (2.2,2) arc (0:360:2mm);

\draw[thick,fill] (0.2,0.0) arc (0:360:2mm);

\node at (0,-0.70) {$3$};

\draw[thick,<-] (0.5,2) -- (1.5,2);

\end{scope}

\begin{scope}[shift={(6,0)}]
%\draw[thin,yellow] (0,0) grid (4,4);

\node at (0,2.70) {$1$};

\draw[thick,fill] (0.2,2) arc (0:360:2mm);

\node at (2,2.70) {$2$};

\draw[thick,fill] (2.2,2) arc (0:360:2mm);

\draw[thick,fill] (0.2,0.0) arc (0:360:2mm);

\node at (0,-0.70) {$3$};

\draw[thick,->] (0.5,0.5) -- (1.5,1.5);

\end{scope}

\begin{scope}[shift={(12,0)}]
%\draw[thin,yellow] (0,0) grid (4,4);

\node at (0,2.70) {$1$};

\draw[thick,fill] (0.2,2) arc (0:360:2mm);

\node at (2,2.70) {$2$};

\draw[thick,fill] (2.2,2) arc (0:360:2mm);

\draw[thick,fill] (0.2,0.0) arc (0:360:2mm);

\node at (0,-0.70) {$3$};

\draw[thick,->] (0,0.5) -- (0,1.5);

\end{scope}

\begin{scope}[shift={(18,0)}]
%\draw[thin,yellow] (0,0) grid (4,4);

\node at (0,2.70) {$1$};

\draw[thick,fill] (0.2,2) arc (0:360:2mm);

\node at (2,2.70) {$2$};

\draw[thick,fill] (2.2,2) arc (0:360:2mm);

\draw[thick,fill] (0.2,0.0) arc (0:360:2mm);

\node at (0,-0.70) {$3$};

\draw[thick,->] (1.5,2) -- (0.5,2);

\draw[thick,->] (0,0.5) -- (0,1.5);

\end{scope}

\begin{scope}[shift={(24,0)}]
%\draw[thin,yellow] (0,0) grid (4,4);

\node at (0,2.70) {$1$};

\draw[thick,fill] (0.2,2) arc (0:360:2mm);

\node at (2,2.70) {$2$};

\draw[thick,fill] (2.2,2) arc (0:360:2mm);

\draw[thick,fill] (0.2,0.0) arc (0:360:2mm);

\node at (0,-0.70) {$3$};

\draw[thick,->] (1.5,2) -- (0.5,2);

\draw[thick,->] (0.5,0.5) -- (1.5,1.5);

\end{scope}

%%%%%%%%%%%%%%%%%%%%%%%%%%%%%%%%%%%%%%%%%%%%%%%%

\begin{scope}[shift={(0,-5)}]
%\draw[thin,yellow] (0,0) grid (4,4);

\node at (0,2.70) {$1$};

\draw[thick,fill] (0.2,2) arc (0:360:2mm);

\node at (2,2.70) {$2$};

\draw[thick,fill] (2.2,2) arc (0:360:2mm);

\draw[thick,fill] (0.2,0.0) arc (0:360:2mm);

\node at (0,-0.70) {$3$};

\draw[thick,->] (0,0.5) -- (0,1.5);

\draw[thick,->] (0.5,0.5) -- (1.5,1.5);

\end{scope}

\begin{scope}[shift={(6,-5)}]
%\draw[thin,yellow] (0,0) grid (4,4);

\node at (0,2.70) {$1$};

\draw[thick,fill] (0.2,2) arc (0:360:2mm);

\node at (2,2.70) {$2$};

\draw[thick,fill] (2.2,2) arc (0:360:2mm);

\draw[thick,fill] (0.2,0.0) arc (0:360:2mm);

\node at (0,-0.70) {$3$};

\draw[thick,->] (0.5,2) -- (1.5,2);

\end{scope}

\begin{scope}[shift={(12,-5)}]
%\draw[thin,yellow] (0,0) grid (4,4);

\node at (0,2.70) {$1$};

\draw[thick,fill] (0.2,2) arc (0:360:2mm);

\node at (2,2.70) {$2$};

\draw[thick,fill] (2.2,2) arc (0:360:2mm);

\draw[thick,fill] (0.2,0.0) arc (0:360:2mm);

\node at (0,-0.70) {$3$};

\draw[thick,<-] (0.5,0.5) -- (1.5,1.5);

\end{scope}

\begin{scope}[shift={(18,-5)}]
%\draw[thin,yellow] (0,0) grid (4,4);

\node at (0,2.70) {$1$};

\draw[thick,fill] (0.2,2) arc (0:360:2mm);

\node at (2,2.70) {$2$};

\draw[thick,fill] (2.2,2) arc (0:360:2mm);

\draw[thick,fill] (0.2,0.0) arc (0:360:2mm);

\node at (0,-0.70) {$3$};

\draw[thick,<-] (0,0.5) -- (0,1.5);

\end{scope}

\begin{scope}[shift={(24,-5)}]
%\draw[thin,yellow] (0,0) grid (4,4);

\node at (0,2.70) {$1$};

\draw[thick,fill] (0.2,2) arc (0:360:2mm);

\node at (2,2.70) {$2$};

\draw[thick,fill] (2.2,2) arc (0:360:2mm);

\draw[thick,fill] (0.2,0.0) arc (0:360:2mm);

\node at (0,-0.70) {$3$};

\draw[thick,<-] (1.5,2) -- (0.5,2);

\draw[thick,<-] (0,0.5) -- (0,1.5);

\end{scope}

%%%%%%%%%%%%%%%%%%%%%%%%%%%%%%%%%%%%%%%%%%%%%%%%
 
\begin{scope}[shift={(0,-10)}]
%\draw[thin,yellow] (0,0) grid (4,4);

\node at (0,2.70) {$1$};

\draw[thick,fill] (0.2,2) arc (0:360:2mm);

\node at (2,2.70) {$2$};

\draw[thick,fill] (2.2,2) arc (0:360:2mm);

\draw[thick,fill] (0.2,0.0) arc (0:360:2mm);

\node at (0,-0.70) {$3$};

\draw[thick,<-] (1.5,2) -- (0.5,2);

\draw[thick,<-] (0.5,0.5) -- (1.5,1.5);

\end{scope}

\begin{scope}[shift={(6,-10)}]
%\draw[thin,yellow] (0,0) grid (4,4);

\node at (0,2.70) {$1$};

\draw[thick,fill] (0.2,2) arc (0:360:2mm);

\node at (2,2.70) {$2$};

\draw[thick,fill] (2.2,2) arc (0:360:2mm);

\draw[thick,fill] (0.2,0.0) arc (0:360:2mm);

\node at (0,-0.70) {$3$};

\draw[thick,<-] (0,0.5) -- (0,1.5);

\draw[thick,<-] (0.5,0.5) -- (1.5,1.5);

\end{scope}

\begin{scope}[shift={(12,-10)}]
%\draw[thin,yellow] (0,0) grid (4,4);

\node at (0,2.70) {$1$};

\draw[thick,fill] (0.2,2) arc (0:360:2mm);

\node at (2,2.70) {$2$};

\draw[thick,fill] (2.2,2) arc (0:360:2mm);

\draw[thick,fill] (0.2,0.0) arc (0:360:2mm);

\node at (0,-0.70) {$3$};

\draw[thick,<-] (0.5,2) -- (1.5,2);

\draw[thick,<-] (0,0.5) -- (0,1.5);

\end{scope}

\begin{scope}[shift={(18,-10)}]
%\draw[thin,yellow] (0,0) grid (4,4);

\node at (0,2.70) {$1$};

\draw[thick,fill] (0.2,2) arc (0:360:2mm);

\node at (2,2.70) {$2$};

\draw[thick,fill] (2.2,2) arc (0:360:2mm);

\draw[thick,fill] (0.2,0.0) arc (0:360:2mm);

\node at (0,-0.70) {$3$};

\draw[thick,->] (0.5,0.5) -- (1.5,1.5);

\draw[thick,<-] (0,0.5) -- (0,1.5);

\end{scope}

\begin{scope}[shift={(24,-10)}]
%\draw[thin,yellow] (0,0) grid (4,4);

\node at (0,2.70) {$1$};

\draw[thick,fill] (0.2,2) arc (0:360:2mm);

\node at (2,2.70) {$2$};

\draw[thick,fill] (2.2,2) arc (0:360:2mm);

\draw[thick,fill] (0.2,0.0) arc (0:360:2mm);

\node at (0,-0.70) {$3$};

\draw[thick,->] (0,0.5) -- (0,1.5);

\draw[thick,->] (0.5,2) -- (1.5,2);

\end{scope}

%%%%%%%%%%%%%%%%%%%%%%%%%%%%%%%%%%%%%%%%%%%%%%%%

\begin{scope}[shift={(0,-15)}]
%\draw[thin,yellow] (0,0) grid (4,4);

\node at (0,2.70) {$1$};

\draw[thick,fill] (0.2,2) arc (0:360:2mm);

\node at (2,2.70) {$2$};

\draw[thick,fill] (2.2,2) arc (0:360:2mm);

\draw[thick,fill] (0.2,0.0) arc (0:360:2mm);

\node at (0,-0.70) {$3$};

\draw[thick,->] (0,0.5) -- (0,1.5);

\draw[thick,<-] (0.5,0.5) -- (1.5,1.5);

\end{scope}

\begin{scope}[shift={(6,-15)}]
%\draw[thin,yellow] (0,0) grid (4,4);

\node at (0,2.70) {$1$};

\draw[thick,fill] (0.2,2) arc (0:360:2mm);

\node at (2,2.70) {$2$};

\draw[thick,fill] (2.2,2) arc (0:360:2mm);

\draw[thick,fill] (0.2,0.0) arc (0:360:2mm);

\node at (0,-0.70) {$3$};

\draw[thick,<-] (0.5,2) -- (1.5,2);

\draw[thick,<-] (0.5,0.5) -- (1.5,1.5);

\end{scope}

\begin{scope}[shift={(12,-15)}]
%\draw[thin,yellow] (0,0) grid (4,4);

\node at (0,2.70) {$1$};

\draw[thick,fill] (0.2,2) arc (0:360:2mm);

\node at (2,2.70) {$2$};

\draw[thick,fill] (2.2,2) arc (0:360:2mm);

\draw[thick,fill] (0.2,0.0) arc (0:360:2mm);

\node at (0,-0.70) {$3$};

\draw[thick,->] (0.5,2) -- (1.5,2);

\draw[thick,->] (0.5,0.5) -- (1.5,1.5);

\end{scope}

\begin{scope}[shift={(18,-15)}]
%\draw[thin,yellow] (0,0) grid (4,4);

\node at (0,2.70) {$1$};

\draw[thick,fill] (0.2,2) arc (0:360:2mm);

\node at (2,2.70) {$2$};

\draw[thick,fill] (2.2,2) arc (0:360:2mm);

\draw[thick,fill] (0.2,0.0) arc (0:360:2mm);

\node at (0,-0.70) {$3$};

\draw[thick,<-] (0.5,2) -- (1.5,2);

\draw[thick,->] (0,0.5) -- (0,1.5);

\draw[thick,->] (0.5,0.5) -- (1.5,1.5);

\end{scope}

\begin{scope}[shift={(24,-15)}]
%\draw[thin,yellow] (0,0) grid (4,4);

\node at (0,2.70) {$1$};

\draw[thick,fill] (0.2,2) arc (0:360:2mm);

\node at (2,2.70) {$2$};

\draw[thick,fill] (2.2,2) arc (0:360:2mm);

\draw[thick,fill] (0.2,0.0) arc (0:360:2mm);

\node at (0,-0.70) {$3$};

\draw[thick,->] (0.5,2) -- (1.5,2);

\draw[thick,<-] (0,0.5) -- (0,1.5);

\draw[thick,<-] (0.5,0.5) -- (1.5,1.5);

\end{scope}

%%%%%%%%%%%%%%%%%%%%%%%%%%%%%%%%%%%%%%%%%%%%%%%%

\begin{scope}[shift={(0,-20)}]
%\draw[thin,yellow] (0,0) grid (4,4);

\node at (0,2.70) {$1$};

\draw[thick,fill] (0.2,2) arc (0:360:2mm);

\node at (2,2.70) {$2$};

\draw[thick,fill] (2.2,2) arc (0:360:2mm);

\draw[thick,fill] (0.2,0.0) arc (0:360:2mm);

\node at (0,-0.70) {$3$};

\draw[thick,<-] (0.5,2) -- (1.5,2);

\draw[thick,->] (0,0.5) -- (0,1.5);

\draw[thick,<-] (0.5,0.5) -- (1.5,1.5);

\end{scope}

\begin{scope}[shift={(6,-20)}]
%\draw[thin,yellow] (0,0) grid (4,4);

\node at (0,2.70) {$1$};

\draw[thick,fill] (0.2,2) arc (0:360:2mm);

\node at (2,2.70) {$2$};

\draw[thick,fill] (2.2,2) arc (0:360:2mm);

\draw[thick,fill] (0.2,0.0) arc (0:360:2mm);

\node at (0,-0.70) {$3$};

\draw[thick,->] (0.5,2) -- (1.5,2);

\draw[thick,->] (0,0.5) -- (0,1.5);

\draw[thick,->] (0.5,0.5) -- (1.5,1.5);

\end{scope}

\begin{scope}[shift={(12,-20)}]
%\draw[thin,yellow] (0,0) grid (4,4);

\node at (0,2.70) {$1$};

\draw[thick,fill] (0.2,2) arc (0:360:2mm);

\node at (2,2.70) {$2$};

\draw[thick,fill] (2.2,2) arc (0:360:2mm);

\draw[thick,fill] (0.2,0.0) arc (0:360:2mm);

\node at (0,-0.70) {$3$};

\draw[thick,->] (0.5,2) -- (1.5,2);

\draw[thick,<-] (0,0.5) -- (0,1.5);

\draw[thick,->] (0.5,0.5) -- (1.5,1.5);

\end{scope}

\begin{scope}[shift={(18,-20)}]
%\draw[thin,yellow] (0,0) grid (4,4);

\node at (0,2.70) {$1$};

\draw[thick,fill] (0.2,2) arc (0:360:2mm);

\node at (2,2.70) {$2$};

\draw[thick,fill] (2.2,2) arc (0:360:2mm);

\draw[thick,fill] (0.2,0.0) arc (0:360:2mm);

\node at (0,-0.70) {$3$};

\draw[thick,<-] (0.5,2) -- (1.5,2);

\draw[thick,<-] (0,0.5) -- (0,1.5);

\draw[thick,<-] (0.5,0.5) -- (1.5,1.5);
\end{scope}

\begin{scope}[shift={(24,-20)}]
%\draw[thin,yellow] (0,0) grid (4,4);

\node at (0,2.70) {$1$};

\draw[thick,fill] (0.2,2) arc (0:360:2mm);

\node at (2,2.70) {$2$};

\draw[thick,fill] (2.2,2) arc (0:360:2mm);

\draw[thick,fill] (0.2,0.0) arc (0:360:2mm);

\node at (0,-0.70) {$3$};

\end{scope}

%%%%%%%%%%%%%%%%%%%%%%%%%%%%%%%%%%%%%%%%%%%%%%%%

\end{tikzpicture}
    \caption{There are $25$ directed acyclic graphs (DAGs) on $3$ ordered vertices.}
    \label{fig_00002}
\end{figure}

\section{Eventually constant pairs of set-theoretic functions}
\label{section_eventually_const_graphs}

\subsection{Introduction to graph theory}
\label{subsection_backgr_graph_thy}

We need some definitions and lemmas before our theorem. A minimal path is a sequence of vertices in a graph, where each consecutive pair of vertices is connected by an edge. Furthermore, except possibly the endpoints, no vertices are repeated in a minimal path.
A minimal path is called a cycle if there is a repetition. A tree is an unoriented graph with no cycles.

A rooted tree is a tree, 
    together with a choice of vertex, 
    which is called the root. 
    A spanning tree inside an unoriented graph $\Gamma$ 
    is a subgraph $T$ of $\Gamma$ such that $T$ is a tree, and is maximal with respect to being a tree inside $\Gamma$. 
    This statement is equivalent to the statement that $T$ is a tree with as many vertices as $\Gamma$, for trees are connected by definition.

A graph is called bipartite (or bicolorable) if one can color the vertices in two distinct colors, say red and black, such that no two vertices of the same color are adjacent (i.e., connected by an edge). It is well-known in graph theory that $\Gamma$ is bipartite if and only if the cycles of $\Gamma$ are of even length~\cite[Theorem 5.3, page 74]{BM76} or~\cite[pages 48-49]{CCPS11}. The largest bipartite graph with $m$ red vertices and $n$ black vertices is called the complete bipartite graph, which we denote by $K(m, n)$.

\subsection{Eventually constant pairs of functions}
\label{subsect_enum_eventually_const}

Given sets $X,Y$ of cardinalities $m,n$, respectively, we will count the number of pairs $(f,g)$, where $f:X\to Y$, $g:Y\to X$, 
    such that the composition $gf$ is eventually constant, that is, the composite $(gf)^k := gf \circ \cdots \circ gf$ is constant for some 
    $k > 0$. 
    Note that $gf$ is eventually constant if and only if $fg$ is. We call $(f, g)$ an eventually constant pair, and denote the set of such pairs by $P(m, n)$. The number $|P(n,m)|$ of such pairs is divisible by $mn$ 
since the eventually constant condition gives us \textit{final} elements $x_0\in X$ and $y_0\in Y$, where every element eventually maps and $f(x_0)=y_0$, $g(y_0)=x_0$.

This problem reduces to a distributed version of Cayley's formula on counting trees. We look at a variation of Cayley's formula and adopt one to our distributed case.

\begin{lemma}[Cayley's formula]
The cardinality of unrooted trees with vertex set $X$ 
is 
$m^{m-2}$.
\end{lemma}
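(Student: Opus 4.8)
The plan is to reproduce A.~Joyal's bijective proof, since this is precisely the principle that the later results of this paper generalize. Write $T_m$ for the number of unrooted trees on the vertex set $X$, where $|X| = m$. Rather than count trees directly, I would count a richer family of objects and then divide: the goal is the identity $m^2\, T_m = m^m$, from which $T_m = m^{m-2}$ follows at once (the degenerate cases $m = 1, 2$ being trivially consistent).

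Call a \emph{vertebrate} a triple $(T, a, b)$ consisting of an unrooted tree $T$ on $X$ together with an ordered pair of (not necessarily distinct) marked vertices $a, b \in X$ --- the ``tail'' and the ``head.'' Since $a$ and $b$ range independently over the $m$ vertices, the number of vertebrates is exactly $m^2 T_m$. On the other hand, the set $X^X$ of all functions $X \to X$ has cardinality $m^m$. The crux of the argument is therefore to exhibit a bijection between vertebrates and functions $X \to X$.

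To construct it: given $(T, a, b)$, let $S$ be the vertex set of the unique path in $T$ from $a$ to $b$ (the ``spine''). Traversing this path from $a$ to $b$ linearly orders $S$, while $S$ also carries the order it inherits as a subset of $X$; comparing the two orders produces a permutation $\sigma$ of $S$. Deleting the edges of the spine disconnects $T$ into a forest in which every component contains exactly one vertex of $S$; rooting each component at that vertex yields a rooted forest on $X$ with root set $S$. Now the pair $\bigl(\text{rooted forest on } X,\ \sigma\bigr)$ is exactly the data of a function $f \colon X \to X$, defined by sending each non-root vertex to its parent in the forest and each root $s \in S$ to $\sigma(s)$: the functional digraph of any endofunction splits into weakly connected pieces, each a single directed cycle with rooted trees grafted onto it, so the cycles reassemble $\sigma$ and the grafted trees reassemble the rooted forest. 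Reversing each step recovers $(T, a, b)$ from $f$, so the assignment is a bijection and $m^2 T_m = |X^X| = m^m$.

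The one place that needs genuine care, and hence the main obstacle, is checking that the ``spine plus rooted forest'' decomposition really is inverse to the ``cycles plus hanging trees'' description of an endofunction: that distinct vertebrates yield distinct $f$, that every $f$ is attained, and that the degenerate configurations (when $a = b$, so the spine is a single vertex and $\sigma$ is trivial, and when the spine exhausts all of $X$) are handled correctly. Everything else is routine bookkeeping. If one preferred to sidestep the functional-digraph language, the Prüfer-code bijection --- iteratively delete the leaf of smallest label and record its unique neighbor, obtaining an element of $X^{m-2}$ --- gives a self-contained alternative; but it is Joyal's argument whose structure is echoed in the theorems that follow.
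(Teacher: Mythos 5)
Your proposal is correct: it is exactly Joyal's vertebrate--endofunction bijection giving $m^2 T_m = m^m$, which is precisely the argument the paper invokes, since its ``proof'' of this lemma is simply a citation to Joyal (and Leinster) rather than a written-out argument. So you have taken the same approach as the paper, just spelled out in full.
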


\begin{proof}
We refer the reader to~\cite[page 16]{A_Joyal81} or ~\cite{Lei21}.
\end{proof}

\begin{lemma}[Number of spanning trees in $K(m, n)$]
There are $m^{n-1}n^{m-1}$ spanning trees in $K(m, n)$.
\end{lemma}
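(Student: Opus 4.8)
The plan is to invoke Kirchhoff's Matrix--Tree Theorem: for a connected graph $\Gamma$ on $N$ vertices, the number of spanning trees equals $\frac1N$ times the product of the nonzero eigenvalues of the Laplacian $L(\Gamma)=D(\Gamma)-A(\Gamma)$, where $D$ is the diagonal degree matrix and $A$ the adjacency matrix. Thus the whole problem reduces to computing the Laplacian spectrum of $K(m,n)$ and multiplying.

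First I would fix coordinates by writing the vertex set as $R\sqcup B$ with $|R|=m$ red vertices and $|B|=n$ black vertices. Every red vertex has degree $n$, every black vertex has degree $m$, and the adjacency matrix is the full off-diagonal all-ones block, so in block form
\[
L(K(m,n))=\begin{pmatrix} n I_m & -J_{m\times n}\\ -J_{n\times m} & m I_n\end{pmatrix},
\]
where $J_{p\times q}$ denotes the all-ones $p\times q$ matrix. Next I would exhibit an explicit eigenbasis: the all-ones vector $\mathbf{1}_{m+n}$ spans the kernel (eigenvalue $0$ with multiplicity $1$, since $K(m,n)$ is connected); any vector supported on $R$ whose coordinates sum to $0$ is an eigenvector with eigenvalue $n$, because the block $-J_{n\times m}$ annihilates it, giving eigenvalue $n$ with multiplicity $m-1$; symmetrically, vectors supported on $B$ with coordinate sum $0$ give eigenvalue $m$ with multiplicity $n-1$. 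These account for $1+(m-1)+(n-1)=m+n-1$ of the $m+n$ eigenvalues, so the last one is forced by the trace: since $\operatorname{tr} L(K(m,n))=2mn$, the remaining eigenvalue must be $m+n$. Feeding this into the Matrix--Tree Theorem, the number of spanning trees of $K(m,n)$ is
\[
\frac{1}{m+n}\,\bigl((m+n)\cdot n^{\,m-1}\cdot m^{\,n-1}\bigr)=m^{\,n-1}n^{\,m-1},
\]
as claimed.

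I do not expect a genuine obstacle here: the only points needing care are that the listed eigenvectors actually form a basis (immediate from the dimension count above) and that the cofactor and eigenvalue forms of the Matrix--Tree Theorem agree (standard). If one prefers a bijective argument in the spirit of the rest of the paper, one can instead use a generalized Pr\"ufer code, encoding a spanning tree of $K(m,n)$ by a pair of words, one of length $n-1$ over the alphabet $R$ and one of length $m-1$ over the alphabet $B$, obtained by iteratively deleting the least leaf and recording its unique neighbor; in that approach the main work shifts to checking invertibility of the encoding, i.e. that each such pair of words comes from a unique spanning tree, and the count $m^{n-1}n^{m-1}$ then falls out by counting the words.
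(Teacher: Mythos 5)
Your spectral argument is correct and complete: the Laplacian of $K(m,n)$ does have eigenvalues $0$, $n$ (multiplicity $m-1$), $m$ (multiplicity $n-1$) and $m+n$, and Kirchhoff's theorem then gives $\frac{1}{m+n}(m+n)\,n^{m-1}m^{n-1}=m^{n-1}n^{m-1}$. Note, though, that the paper does not prove this lemma at all --- it simply cites the literature (Fiedler--Sedl\'a\v{c}ek and Abu-Sbeih) --- so what you have written is a self-contained proof where the paper offers only a pointer. Compared with that, your Matrix--Tree computation is the quickest rigorous route, resting only on standard linear algebra; the one point worth making explicit is that $L$ is symmetric, hence diagonalizable, so the trace argument legitimately pins down the last eigenvalue (alternatively, exhibit the eigenvector that is constant equal to $n$ on the red part and $-m$ on the black part). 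Your second suggestion, a bipartite Pr\"ufer-type encoding, would be closer in spirit to the Joyal-style bijections the paper uses for Theorem~\ref{thm_num_eventually_const_pairs}, but as you say the burden there shifts to proving the encoding is invertible, which is the nontrivial part; as a primary proof the spectral one is preferable, and the bijective one is a nice complement rather than a shortcut.
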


\begin{proof}
See~\cite{FS58} or~\cite{AS90} for a proof.
\end{proof}

Also see Igor Pak's slides~\cite[page 4]{Pak09} on spanning trees and bipartite graphs for more detail.

Next, 
    we state the main theorem of this section, thus generalizing   Cayley's formula.
\begin{theorem}
\label{thm_num_eventually_const_pairs}
Let $X$ and $Y$ be sets of sizes $m$ and $n$, respectively. There are 
\begin{equation}
\label{eqn_eventually_const_pairs}
m^{n-1} n^{m-1} (m + n - 1)
\end{equation}
eventually constant pairs of maps between the finite sets $X$ and $Y$. 
\end{theorem}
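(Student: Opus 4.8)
The plan is to set up a bijection between the set $P(m,n)$ of eventually constant pairs and the set of triples $(T, e, *)$ where $T$ is a spanning tree of the complete bipartite graph $K(m,n)$ on the vertex sets $X$ (colored red) and $Y$ (colored black), $e$ is a chosen edge of $T$, and $*$ is one more binary choice, so that the total count is $(m^{n-1}n^{m-1}) \cdot (m+n-1) \cdot \text{(something)}$; since a spanning tree of $K(m,n)$ has $m+n-1$ edges, the factor $m+n-1$ in~\eqref{eqn_eventually_const_pairs} should come precisely from the choice of $e$. So more precisely I expect the clean statement to be: $|P(m,n)|$ is in bijection with pairs $(T,e)$, $T$ a spanning tree of $K(m,n)$ and $e \in E(T)$, giving $|P(m,n)| = m^{n-1}n^{m-1}(m+n-1)$ directly. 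This is the ``distributed'' analogue of Joyal's bijective proof of Cayley's formula, where a vertex-rooted-and-co-rooted structure (a doubly pointed tree, i.e.\ a tree together with an ordered pair of vertices, equivalently a function) is replaced here by a tree together with a distinguished \emph{edge} carrying an orientation-type datum.

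The key steps, in order. First I would recall the standard dictionary: a pair $(f,g)$ with $f:X\to Y$, $g:Y\to X$ is the same as a functional graph on $X \sqcup Y$ in which every red vertex has out-degree $1$ into black and every black vertex has out-degree $1$ into red; this is a bipartite ``functional digraph.'' Second, I would invoke the general structure theorem for functional graphs: each weakly connected component consists of a unique cycle with trees hanging off it. The eventually-constant hypothesis on $gf$ (equivalently $fg$, as noted) forces every cycle to have length exactly $2$ — a red vertex $x_0$ and a black vertex $y_0$ with $f(x_0)=y_0$, $g(y_0)=x_0$ — and forces the whole graph to be connected (otherwise iterating lands in different $2$-cycles). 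Third, given such a connected bipartite functional graph with its unique $2$-cycle $\{x_0,y_0\}$, I delete one of the two arcs between $x_0$ and $y_0$ and forget all remaining orientations: the result is a spanning tree $T$ of $K(m,n)$ together with a marked edge $e = \{x_0,y_0\}$, and I must also record which of the two arcs was deleted — but I claim that datum is recoverable, because in a functional graph the orientations of \emph{all} edges are determined by pointing every edge toward the unique $2$-cycle (each non-cycle vertex has a unique out-arc on its path to the root edge), so only the $2$-cycle itself is ambiguous; that single bit is exactly what makes this count, not a variant — and this is where I would double-check against the remark in the introduction about the count in~\cite{DK25} ``being a bit different.'' Fourth, I would exhibit the inverse map: given $(T,e)$ with $e=\{x_0,y_0\}$, orient every edge of $T$ toward $e$, then put \emph{both} arcs $x_0\to y_0$ and $y_0\to x_0$ on the edge $e$; reading off out-arcs gives a well-defined $(f,g)$, one checks $gf$ is eventually constant with final element $x_0$, and the two constructions are mutually inverse. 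Counting: $m^{n-1}n^{m-1}$ spanning trees, each with $m+n-1$ edges to mark, yields~\eqref{eqn_eventually_const_pairs}.

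The main obstacle I anticipate is the bookkeeping around the $2$-cycle and why no extra factor of $2$ (or $1/2$) appears: one must be careful that ``delete one arc of the $2$-cycle'' loses exactly one bit of information and that this bit is \emph{not} recoverable from $T$ and $e$ alone — yet the inverse map as I described it (re-inflating $e$ to a full $2$-cycle) is still well-defined and surjective onto $P(m,n)$, because every eventually constant pair \emph{does} have both arcs $x_0\to y_0$, $y_0\to x_0$ present by definition of being a $2$-cycle. So in fact there is no lost bit: the $2$-cycle always carries both arcs, the trees hanging off it are oriented canonically, and the only free data are $T$ and the choice of which edge becomes the $2$-cycle. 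I would want to verify this carefully on the small case $m=n=1$ (where $m^{n-1}n^{m-1}(m+n-1)=1$, matching the unique pair $f,g$ between singletons) and $m=2,n=1$ (predicting $1^{1}\cdot 2^{1}\cdot 2 = 4$ pairs) to be sure the edge-marking, and not vertex-marking, is the correct normalization and that the orientation-toward-$e$ rule genuinely recovers $(f,g)$ uniquely.
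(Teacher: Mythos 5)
Your argument is correct and is essentially the paper's own proof: both encode $(f,g)$ as a bipartite functional digraph, use eventual constancy to force a unique $2$-cycle and weak connectivity, and realize the count via the bijection with pairs $(T,e)$, where $T$ is a spanning tree of $K(m,n)$ and $e\in E(T)$, the inverse being obtained by orienting $T$ toward $e$ and doubling $e$ into the $2$-cycle (the paper phrases this as the fiber of its map to spanning trees having size $m+n-1$). The only blemish is the arithmetic in your planned sanity check: for $m=2$, $n=1$ the formula gives $m^{n-1}n^{m-1}(m+n-1)=2^{0}\cdot 1^{1}\cdot 2=2$, matching the paper's count of two pairs, not $4$.
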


\begin{proof}
The basic idea is to first form a directed graph from of the eventually constant pair, $\Gamma:P(m,n) \to \Graph$. The target $\Graph$ is the set of all finite, directed graphs. We will then define $\Phi$ from the image of $\Gamma$ in $\Graph$ to the set of spanning trees in $K(m,n)$, giving a sequence of maps:
% https://q.uiver.app/#q=WzAsMyxbMCwwLCJwKG0sIG4pIl0sWzEsMCwiXFxHYW1tYShwKG0sIG4pKSJdLFszLDAsIlxceyBcXHRleHR7U3Bhbm5pbmcgdHJlZXMgaW4gfSBLKG0sIG4pIFxcfSJdLFswLDEsIlxcR2FtbWEiXSxbMSwyLCJtK24tMToxIl1d
\[\begin{tikzcd}
	{P(m, n)} & {\Gamma(P(m, n))} & {\{ \text{Spanning trees in } K(m, n) \}.}
	\arrow["\Gamma", from=1-1, to=1-2]
	\arrow["\Phi", from=1-2, to=1-3]
\end{tikzcd}\]

If we can establish that $\Gamma$ is one-to-one, and that the preimage of $\Phi$ of a spanning tree in $K(m,n)$ consists of $m+n-1$ graphs, we will have a bijection counting $|P(m, n)|$. We begin by proving the former claim. 

Take an eventually constant pair $(f, g)$ in $P(m, n)$. Then $\Gamma(f, g)$ is defined as a graph with $m$ red vertices labeled by $X$, and $n$ black vertices labeled by $Y$. Draw an oriented edge from every $x \in X$ to its image $f(x) \in Y$, and an edge from each $y \in Y$ to its image $g(y) \in X$. See Figures~\ref{fig_00003} and \ref{fig_00004} for small $m$ and $n$. The graph $\Gamma(f, g)$ is bipartite; there cannot be an edge between two vertices in $X$ (resp. $Y$) because this graph is defined by functions. It is clear that $\Gamma$ is injective, capturing all of the functional information of $f$ and $g$ and thereby establishing a bijection onto its image $P(m, n) \cong \Gamma(P(m, n))$. 

Now, we will look at some finer properties of $\Gamma(f, g)$. A $2k$-cycle in $\Gamma(f, g)$ is equivalent to a $k$-periodic point $x \in X$.
That is, $\Gamma$ has a 2$k$-cycle if and only if there is some point $x \in X$ such that $(gf)^k(x) = x$, and $k$ is the smallest natural number with this property. Therefore, $\Gamma(f, g)$ has exactly one 2-cycle, and no other cycles.

By an abuse of notation, we will write $\Phi(f, g)$ to mean $\Phi\circ \Gamma(f,g)$. 
Now, there are no cycles in $\Phi(f, g)$, and $\Phi(f, g)$ has $m+n$ vertices. So it is a spanning tree, say $T$, inside $K(m, n)$. The only thing to do now is to determine the preimage of $T \subseteq K(m, n)$. Take an edge $e$ in $T$, and remove it. Its endpoints $e_0, e_1$ lie within two disjoint subtrees $T_0, T_1 \subseteq T$, respectively. 

Given two vertices in a tree, there is a unique path between them. Orient the edges of $T_0$ along the unique path to $e_0$, and do the same for $T_1$ and $e_1$. Finally, connect $e_0$ and $e_1$ with a 2-cycle. Since $T$ is bipartite, every graph in $\Phi^{-1}(T)$ is bipartite, and each such graph is the graph of an eventually constant pair. Since there are $m+n-1$ edges in $T$, it is equal to $|\Phi^{-1}(T)|$. Therefore, $|P(m, n)| = |\Gamma(P(m, n))| = (m+n-1) | \{ \text{Spanning trees in } K(m, n) \}| = m^{n-1}n^{m-1}(m+n-1)$. 
\end{proof}

\begin{remark}
Theorem~\ref{thm_num_eventually_const_pairs} establishes a bijection between eventually constant maps and a spanning tree in $K(m,n)$ and an edge of the tree.
\end{remark}

\begin{corollary}
\label{cor_prob_eventually_const}
    The probability that a pair $(f, g)$ of set-theoretic maps, where $f : X \to Y, \; g : Y \to X$, is eventually constant is 
\[ 
    \frac{m+n-1}{mn}.
\]
\end{corollary}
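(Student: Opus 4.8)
The plan is to derive this immediately from Theorem~\ref{thm_num_eventually_const_pairs} by dividing the count of eventually constant pairs by the total number of pairs of maps. First I would observe that a pair $(f,g)$ with $f\colon X\to Y$ and $g\colon Y\to X$ consists of a choice of $f$, of which there are $n^m$ (each of the $m$ elements of $X$ maps to one of the $n$ elements of $Y$), together with an independent choice of $g$, of which there are $m^n$. Hence the sample space has size $n^m\, m^n$, and all of these pairs are taken to be equally likely.

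Next I would invoke Theorem~\ref{thm_num_eventually_const_pairs}, which gives $|P(m,n)| = m^{n-1} n^{m-1}(m+n-1)$. The probability in question is therefore the ratio
\[
\frac{|P(m,n)|}{n^m\, m^n} \;=\; \frac{m^{n-1} n^{m-1}(m+n-1)}{m^n\, n^m} \;=\; \frac{m+n-1}{mn},
\]
after cancelling $m^{n-1}$ against $m^n$ and $n^{m-1}$ against $n^m$. This completes the argument.

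There is essentially no obstacle here: the only thing to be careful about is correctly identifying the size of the sample space as $n^m m^n$ rather than, say, $(mn)^{?}$, and making sure the exponents match those appearing in~\eqref{eqn_eventually_const_pairs} so that the cancellation is clean. One could optionally remark that the formula behaves sensibly in the extreme cases (for instance when $m=n=1$ it gives probability $1$, consistent with the unique pair being constant), but this is not needed for the proof.
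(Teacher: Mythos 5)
Your argument is correct and is essentially the paper's own proof: divide the count $m^{n-1}n^{m-1}(m+n-1)$ from Theorem~\ref{thm_num_eventually_const_pairs} by the total number $m^n n^m$ of pairs of maps and cancel. Nothing further is needed.
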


\begin{proof}
Applying Theorem~\ref{thm_num_eventually_const_pairs} and 
since there are 
$m^n n^m$ 
maps between 
$X$ and $Y$, 
we obtain the expression.
\end{proof}

\begin{example}
When $|X|=|Y|=1$, there is $1$ eventually constant map. For $|X|=2$ and $|Y|=1$, there are $2$ eventually constant maps. For $|X|=|Y|=2$, there are $12$ eventually constant maps. See Figures~\ref{fig_00003} and \ref{fig_00004} for more detail. We computed, by hand, $|P(3, 3)| = 405 = 3^4 \times 5$, which also follows from Theorem~\ref{thm_num_eventually_const_pairs}. 
\end{example}

\begin{figure}
    \centering
\begin{tikzpicture}[scale=0.5,decoration={
    markings,
    mark=at position 0.5 with {\arrow{>}}}]

%%%%%%%%%%%%%%%%%%%%%%%%%%%%%%%%%%%%%%%%%%%%%%%%%

\begin{scope}[shift={(0,-1.75)}]
%\draw[thin,yellow] (0,0) grid (4,4);

\node at (-0.75,2.5) {$x_1$};

\node at (-0.85,4) {$X$};

\draw[thick,fill,magenta] (0.2,2.5) arc (0:360:2mm);

\draw[thick,fill] (2.25,2.25) rectangle (2.75,2.75);

\node at (3.4,2.5) {$y_1$};

\node at (3.4,4) {$Y$};

\draw[very thick, dashed, ->, magenta] (0.5,2.75) .. controls (0.75,3.25) and (1.75,3.25) .. (2,2.75);

\draw[thick,<-] (0.5,2.25) .. controls (0.75,1.75) and (1.75,1.75) .. (2,2.25);

\end{scope}

%%%%%%%%%%%%%%%%%%%%%%%%%%%%%%%%%%%%%%%%%

\begin{scope}[shift={(11,0)}]
% \draw[thin,yellow] (0,0) grid (4,4);

\node at (-0.75,2.5) {$x_1$};

\draw[thick,fill,magenta] (0.2,2.5) arc (0:360:2mm);

\node at (-0.75,0) {$x_2$};

\draw[thick,fill,magenta] (0.2,0.0) arc (0:360:2mm);

\draw[thick,fill] (2.25,2.25) rectangle (2.75,2.75);

\node at (3.4,2.5) {$y_1$};

\draw[very thick, dashed, ->,magenta] (0.5,2.75) .. controls (0.75,3.25) and (1.75,3.25) .. (2,2.75);

\draw[thick,<-] (0.5,2.25) .. controls (0.75,1.75) and (1.75,1.75) .. (2,2.25);

\draw[very thick, dashed, ->, magenta] (0.40,0) .. controls (0.65,0) and (2.25,1) .. (2.5,2);

\end{scope}

%%%%%%%%%%%%%%%%%%%%%%%%%%%%%%%%%%%%%%%%%%%%%%%%

\begin{scope}[shift={(18,0)}]
% \draw[thin,yellow] (0,0) grid (4,4);

\node at (-0.75,2.5) {$x_1$};

\draw[thick,fill, magenta] (0.2,2.5) arc (0:360:2mm);

\node at (-0.75,0) {$x_2$};

\draw[thick,fill, magenta] (0.2,0.0) arc (0:360:2mm);

\draw[thick,fill] (2.25,2.25) rectangle (2.75,2.75);

\node at (3.4,2.5) {$y_1$};

\draw[very thick, dashed, ->, magenta] (0.5,2.75) .. controls (0.75,3.25) and (1.75,3.25) .. (2,2.75);

\draw[thick,<-] (0,0.40) .. controls (0,0.75) and (1,2.25) .. (2,2.4);

\draw[very thick, dashed, ->, magenta] (0.40,0) .. controls (0.75,0) and (2.25,1) .. (2.5,2);

\end{scope}

%%%%%%%%%%%%%%%%%%%%%%%%%%%%%%%%%%%%%%%%%%%%%%%%

\end{tikzpicture}
    \caption{Left: there is one eventually constant map when $|X|=|Y|=1$. Right: there are two eventually constant maps when  $|X|=2$ and $|Y|=1$. }
    \label{fig_00003}
\end{figure}
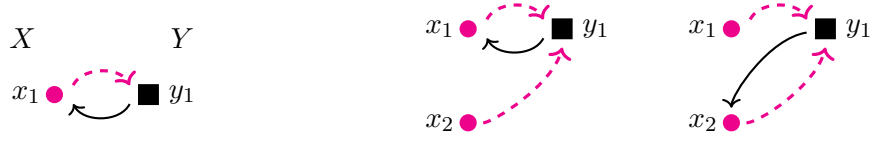

\input{fig_00004}

\section{Nilpotent pairs and balanced vectors}
\label{section_pairs_maps_nilp_finite_field}
Let $k$ be a field.
Let $V$ and $W$ 
    be vector spaces of dimension $m$ and $n$, respectively. 
Consider pairs $(f, g)$ of linear maps, with $f \in \Hom(V,W)$ and $g \in \Hom(W,V)$. 
Let $\mathcal{N}(V)$ be the vector space of nilpotent linear operators on $V$; it is a subspace of $\End(V)$.

In this section, 
we generalize~\cite[Theorem 5]{Lei21}.

\subsection{Linear algebra for two vector spaces}
\label{subsection_lin_algebra}
A complement of a subspace $X$ in a vector space $V$ is a subspace $X'$ satisfying $X \oplus X' = V$.

\begin{lemma}
\label{lemma_canon_complement}
    There is a natural bijection between 
    $\Hom(V, W)$ and complements of 
    $W$ in 
    $V \oplus W$.
\end{lemma}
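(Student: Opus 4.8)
The plan is to realize the bijection explicitly through the ``graph'' construction and then check that the two directions are mutually inverse. Throughout, identify $W$ with the subspace $\{0\}\oplus W\subseteq V\oplus W$, and write $\pi\colon V\oplus W\to V$ and $q\colon V\oplus W\to W$ for the two coordinate projections.

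First I would send a linear map $\phi\in\Hom(V,W)$ to its graph $\Gamma_\phi := \{\,(v,\phi(v)) : v\in V\,\}$, and verify that $\Gamma_\phi$ is a complement of $W$ in $V\oplus W$: it is visibly a subspace; $\Gamma_\phi\cap W=0$ because any element of the intersection has first coordinate $0$ and hence equals $(0,\phi(0))=0$; and $\Gamma_\phi+W=V\oplus W$ since $(v,w)=(v,\phi(v))+(0,\,w-\phi(v))$. Conversely, given a complement $U$ of $W$ in $V\oplus W$, I would show that $\pi|_U\colon U\to V$ is an isomorphism: injectivity holds because $\ker(\pi|_U)=U\cap(\{0\}\oplus W)=0$, and surjectivity holds because for $v\in V$ we may write $(v,0)=u+(0,w)$ with $u\in U$, $w\in W$ using $U\oplus W=V\oplus W$, whence $\pi(u)=v$. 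Then define $\phi_U := q\circ(\pi|_U)^{-1}\in\Hom(V,W)$; since $(\pi|_U)^{-1}(v)$ is the unique element of $U$ with first coordinate $v$ and its second coordinate is $q\big((\pi|_U)^{-1}(v)\big)=\phi_U(v)$, we get $U=\{\,(v,\phi_U(v)):v\in V\,\}=\Gamma_{\phi_U}$.

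Finally I would check the two assignments are inverse to one another: $\phi_{\Gamma_\phi}=\phi$ because the unique element of $\Gamma_\phi$ lying over $v$ is $(v,\phi(v))$, whose $W$-component is $\phi(v)$; and $\Gamma_{\phi_U}=U$ was shown above. For the word ``natural'' I would remark that the construction involves no choices and is equivariant: an automorphism $(\alpha,\beta)\in\GL(V)\times\GL(W)$ acts diagonally on $V\oplus W$, fixes $W$ setwise, and carries $\Gamma_\phi$ to $\Gamma_{\beta\phi\alpha^{-1}}$, matching the usual action on $\Hom(V,W)$; the assignment is likewise functorial in the pair $(V,W)$ with respect to isomorphisms. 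There is no serious obstacle here, as the content is elementary linear algebra; the only point deserving a little care is the surjectivity of $\pi|_U$, which I would emphasize follows directly from the direct-sum decomposition rather than from a dimension count, so that the statement in fact holds without any finite-dimensionality hypothesis.
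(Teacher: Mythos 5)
Your proof is correct and is essentially the same argument the paper invokes by citing Leinster's Lemma 1: the bijection via the graph $\Gamma_\phi$ of a map $\phi\in\Hom(V,W)$, with the inverse recovered from the projection of a complement onto $V$. Nothing is missing, and your observation that no finite-dimensionality is needed is consistent with the cited source.
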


\begin{proof}
We refer to~\cite[Lemma 1]{Lei21}.
\end{proof}

\begin{lemma}
\label{lem_Isomorphisms}
    Let $f : V \to W$ and $g : W \to V$ be linear maps. Then $gf\in \Aut(V)$ and $fg\in \Aut(W)$ if and only if $f$, $g$ are isomorphisms.
\end{lemma}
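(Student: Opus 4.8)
The plan is to prove the two implications separately; both are short, so the real content is just careful bookkeeping with injectivity and surjectivity, and the main thing to notice is that both hypotheses $gf \in \Aut(V)$ \emph{and} $fg \in \Aut(W)$ are genuinely needed (one alone is insufficient: take $V=k$, $W=k^2$, with $f$ the inclusion into the first coordinate and $g$ the first projection; then $gf=\id_V\in\Aut(V)$ while $f$ is not an isomorphism).

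For the ($\Leftarrow$) direction, if $f$ and $g$ are isomorphisms, then $gf\colon V\to W\to V$ is a composite of bijective linear maps, hence lies in $\Aut(V)$, and symmetrically $fg\in\Aut(W)$. For the ($\Rightarrow$) direction, assume $gf\in\Aut(V)$ and $fg\in\Aut(W)$. First I would extract injectivity of $f$ from injectivity of $gf$: if $f(v)=0$ then $(gf)(v)=0$, so $v=0$. Next I would extract surjectivity of $f$ from surjectivity of $fg$: given $w\in W$, write $w=(fg)(w')$ for some $w'\in W$, whence $w=f\bigl(g(w')\bigr)\in\im f$. Thus $f$ is a bijective linear map, i.e.\ an isomorphism $V\to W$. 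Finally, $g=(gf)\circ f^{-1}$ exhibits $g$ as a composite of isomorphisms $W\to V\to V$, so $g$ is an isomorphism as well.

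There is no serious obstacle here; the only subtlety is remembering to use each hypothesis to get one of injectivity/surjectivity of the \emph{same} map $f$, rather than trying to conclude directly from a single composite. As a byproduct the argument also records $\dim V=\dim W$ (from $f$ being injective and surjective), which is consistent with, and a special case of, the numerical constraints appearing later; I may note this in passing but it is not needed for the statement.
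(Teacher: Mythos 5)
Your proof is correct, but it takes a different route from the paper's. The paper's argument is a one-line algebraic one: it simply exhibits explicit inverses, $f^{-1} = g(fg)^{-1}$ and $g^{-1} = f(gf)^{-1}$ (that these are two-sided inverses follows from the identities $f(gf) = (fg)f$ and $g(fg) = (gf)g$), an argument that is purely formal and would work verbatim in any category, with no reference to elements, kernels, or images. You instead argue element-theoretically: injectivity of $f$ from injectivity of $gf$, surjectivity of $f$ from surjectivity of $fg$, then invoke the fact that a bijective linear map is an isomorphism, and finally recover $g = (gf)\circ f^{-1}$ as a composite of isomorphisms. Both arguments are complete and equally short; yours is arguably more transparent about \emph{why} each hypothesis is needed (and your counterexample with the inclusion/projection $k \rightleftarrows k^2$ is a nice illustration that one hypothesis alone does not suffice), while the paper's has the advantage of producing the inverses explicitly and of not relying on the linear-algebra fact that bijective linear maps are invertible as linear maps. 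Your closing formula $g = (gf)\circ f^{-1}$ is verified correctly, and the side remark that $\dim V = \dim W$ follows is accurate though not needed.
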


\begin{proof}
    The implication from right to left is obvious. So assume $gf \in \Aut(V)$ and $fg \in \Aut(W)$. Then let $f^{-1} = g(fg)^{-1}$ and $g^{-1} = f(gf)^{-1}$. 
\end{proof}

The following lemmas, Lemmas~\ref{lemma_characterization_of_pairs} and~\ref{lemma_bijection_isom}, generalize the study of linear operators to the case of pairs of linear maps. We begin by giving a generalization of the Fitting decomposition.

\begin{lemma}
\label{lemma_characterization_of_pairs}
Let $V$ and $W$ 
be finite-dimensional vector spaces. An ordered pair $(f,g)$ of linear maps, where $f\in \Hom(V,W)$ and $g\in \Hom(W,V)$,
is uniquely characterized by the following data. Write $V_N, V_I$ as subspaces of $V$ such that we have the direct sum decomposition 
$V = V_I \oplus V_N$ and 
    write 
$W_N, W_I$ as subspaces of $W$ such that 
$W = W_I \oplus W_N$,  
    subject to the condition that 
    \begin{equation*}
(gf)|_{V_I}\in \Aut(V_I), \quad
(fg)|_{W_I}\in \Aut(W_I),  \quad
(gf)|_{V_N} \in \mathcal{N}(V_N), \quad
(fg)|_{W_N} \in \mathcal{N}(W_N).
    \end{equation*} 
\end{lemma}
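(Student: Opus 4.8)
The plan is to reduce everything to the classical Fitting decomposition applied separately to the two composites $gf\in\End(V)$ and $fg\in\End(W)$, and then to check that $f$ and $g$ are block-diagonal with respect to the resulting splittings. Concretely, I would first set $V_I:=\im((gf)^N)$ and $V_N:=\ker((gf)^N)$ for $N\gg 0$ (chosen large enough that the images and kernels of all powers in sight have stabilized), and likewise $W_I:=\im((fg)^N)$, $W_N:=\ker((fg)^N)$. The Fitting lemma then yields $V=V_I\oplus V_N$ and $W=W_I\oplus W_N$ with $(gf)|_{V_I}\in\Aut(V_I)$, $(gf)|_{V_N}\in\mathcal{N}(V_N)$, $(fg)|_{W_I}\in\Aut(W_I)$, and $(fg)|_{W_N}\in\mathcal{N}(W_N)$, which is precisely the list of required conditions.

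Next I would show that $f$ and $g$ respect these decompositions, i.e.\ $f(V_I)\subseteq W_I$, $f(V_N)\subseteq W_N$, $g(W_I)\subseteq V_I$, and $g(W_N)\subseteq V_N$. The key identities are the intertwiners $f\circ(gf)^k=(fg)^k\circ f$ and $g\circ(fg)^k=(gf)^k\circ g$, both immediate by induction on $k$. From the first, $f(\im((gf)^k))=(fg)^k(f(V))\subseteq\im((fg)^k)$, so taking $k=N$ gives $f(V_I)\subseteq W_I$; and if $(gf)^Nv=0$ then $(fg)^Nf(v)=f((gf)^Nv)=0$, so $f(v)\in W_N$. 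The statements for $g$ follow symmetrically. Hence $(f,g)$ decomposes as the direct sum of the pair $(f|_{V_I}\colon V_I\to W_I,\ g|_{W_I}\colon W_I\to V_I)$, whose composites $(gf)|_{V_I}$, $(fg)|_{W_I}$ are automorphisms — so that $f|_{V_I}$ and $g|_{W_I}$ are themselves isomorphisms by Lemma~\ref{lem_Isomorphisms}, and in particular $\dim V_I=\dim W_I$ — and the pair $(f|_{V_N},g|_{W_N})$, whose composites are nilpotent. This establishes existence of the decomposition and shows that $(f,g)$ is reconstructed from it together with the four restricted maps.

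For uniqueness, suppose $V=V_I'\oplus V_N'$ and $W=W_I'\oplus W_N'$ is any decomposition meeting the four conditions. Since $(gf)|_{V_I'}$ and $(gf)|_{V_N'}$ are endomorphisms, $V_I'$ and $V_N'$ are $gf$-invariant, and $gf$ acts invertibly on the first and nilpotently on the second; by uniqueness of the Fitting decomposition of the single operator $gf$ this forces $V_I'=\im((gf)^N)=V_I$ and $V_N'=\ker((gf)^N)=V_N$. The identical argument applied to $fg$ gives $W_I'=W_I$ and $W_N'=W_N$. Thus the data attached to $(f,g)$ is canonical.

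I expect the only real friction to be the compatibility step: one must track that the images and kernels of the \emph{two distinct} operators $gf$ and $fg$ are matched up by $f$ and $g$ in the correct direction, and that a single stabilization index $N$ can be chosen uniformly for all four filtrations. Neither point is deep, but both are where a slip of the argument would occur; the remainder is a direct appeal to the classical Fitting lemma (as used in~\cite{Lei21}) together with Lemma~\ref{lem_Isomorphisms}.
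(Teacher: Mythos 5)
Your proof is correct and follows the paper's overall strategy: your subspaces $V_I=\im((gf)^N)$, $V_N=\ker((gf)^N)$ (and likewise for $W$) coincide with the paper's $\bigcap_{i\ge 0}\im((gf)^i)$ and $\bigcup_{i\ge 0}\ker((gf)^i)$ once the powers stabilize, both arguments rest on the Fitting lemma applied to the single operators $gf$ and $fg$, and both use Lemma~\ref{lem_Isomorphisms} to conclude that $f|_{V_I}$ and $g|_{W_I}$ are isomorphisms. The genuine difference is in the compatibility step. You prove the containments $f(V_I)\subseteq W_I$, $f(V_N)\subseteq W_N$ (and symmetrically for $g$) directly from the intertwining identities $f\circ(gf)^k=(fg)^k\circ f$ and $g\circ(fg)^k=(gf)^k\circ g$, which immediately yields the block-diagonal form of $f$ and $g$. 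The paper instead argues that a nonzero vector of $V_I$ cannot be mapped into $W_N$, and that a vector of $V_N$ mapped into $W_I$ must be zero; ruling out vectors that land \emph{entirely} in the wrong summand is, as written, weaker than showing the off-diagonal components vanish, so your containment argument is the more direct (and more airtight) route to the same block structure. You also make explicit two points the paper leaves implicit: that a single stabilization index $N$ (say $N\ge\max\{\dim V,\dim W\}$) works for all four chains, and the uniqueness of the decomposition, which you deduce from uniqueness of the Fitting decomposition of $gf$ and of $fg$ after observing that the four conditions force $gf$- and $fg$-invariance of the given summands. No gaps.
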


\begin{proof}
Take 
\[ 
V_I = \bigcap_{i \ge 0} \im((gf)^i), 
\quad 
V_N = \bigcup_{i \ge 0} \ker((gf)^i).
\] 
Likewise, take 
\[ 
W_I = \bigcap_{i \ge 0} \im((fg)^i), 
\quad 
W_N = \bigcup_{i \ge 0} \ker((fg)^i). 
\] 
The decomposition $V = V_I \oplus V_N$ and $W = W_I \oplus W_N$ is the statement of Fitting lemma, i.e., \cite[pages 113--114]{Jac89}. Finite-dimensionality is needed since we want $V$ and $W$ to be Noetherian and Artinian. 

In the above subspace decompositions, we claim the following block matrix decompositions for $f$ and $g$:
\[ f= \begin{pmatrix}
    S_1 & 0 \\
    0 & N_1
\end{pmatrix} ,  \quad\quad 
g = 
\begin{pmatrix}
    S_2 & 0 \\
    0 & N_2
\end{pmatrix}, 
\]
where $S_i$ are invertible, and $N_i$ compose to nilpotent operators.

Now, $S_1:=f|_{V_I} :V_I\to W_I$ and $S_2:=g|_{W_I} : W_I \to V_I$ are isomorphisms by Lemma~\ref{lem_Isomorphisms}. As seen in the usual Fitting decomposition, we have $(gf)|_{V_N}$ and $(fg)|_{W_N}$ as nilpotent operators. So $f|_{V_N}$ and $g|_{W_N}$ compose to give nilpotent operators. 

To prove that the bottom-left blocks are zero, we only need to show that $f$ does not map nonzero vectors in $V_I$ into $W_N$. If $v \in V_I$ is such that $fv \in W_N$, then $(fg)^i fv = 0$ for some $i$. That means that $g(fg)^i fv = (gf)^{i+1}v=0$, so $v \in V_N$. Since $V = V_I \oplus V_N$, $v=0$. 

Finally, to show that the top-right blocks are zero, it suffices to show that any $v \in V_N$ such that $fv \in W_I$ is zero. Suppose we had such a $v$. For every $j \ge 0$, there exists a $y_j \in W$ such that $f(v) = (fg)^jy_j$, so $gf(v) = (gf)^j(gy_j) \in \im((gf)^j)$. Since this is true for every $j$, we see that $gf(v) \in V_I$. But $gf$ acts on $V_I$ invertibly. So $v \in V_I$, which implies $v=0$. Therefore, we are done.
\end{proof}

The following is the key lemma for the proof of Theorem~\ref{thm_gen_nilpotent_two_vs}.

\begin{lemma}
\label{lemma_bijection_isom}
There is a (non-canonical) bijection between 
\begin{center}
ordered bases of $\ell$-dimensional vector space $X$ 
    and 
    isomorphisms $X \lra k^{\ell}$.
\end{center}
\end{lemma}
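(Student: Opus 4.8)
The plan is to produce the bijection explicitly by a standard change-of-basis argument. Fix once and for all the standard basis $(e_1,\ldots,e_\ell)$ of $k^\ell$. Given an ordered basis $\mathbf{b} = (b_1,\ldots,b_\ell)$ of $X$, define a linear map $\varphi_{\mathbf{b}} : X \to k^\ell$ by sending $b_i \mapsto e_i$ and extending linearly; this is well-defined precisely because $\mathbf{b}$ is a basis, and it is an isomorphism since it carries one basis bijectively onto another. Conversely, given an isomorphism $\psi : X \to k^\ell$, set $b_i = \psi^{-1}(e_i)$; the tuple $(b_1,\ldots,b_\ell)$ is an ordered basis of $X$ because $\psi^{-1}$ is an isomorphism and the image of a basis under an isomorphism is a basis.

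The two assignments $\mathbf{b} \mapsto \varphi_{\mathbf{b}}$ and $\psi \mapsto (\psi^{-1}(e_i))_i$ are mutually inverse: starting from $\mathbf{b}$, we have $\varphi_{\mathbf{b}}^{-1}(e_i) = b_i$ by construction, so we recover $\mathbf{b}$; starting from $\psi$, the map sending $\psi^{-1}(e_i) \mapsto e_i$ agrees with $\psi$ on a basis, hence equals $\psi$. This establishes the bijection. The reason it is only non-canonical is that the construction depends on the arbitrary choice of the standard basis of $k^\ell$ (equivalently, on fixing an identification of the abstract index set $\{1,\ldots,\ell\}$ with the coordinates of $k^\ell$); any other choice of ordered basis of $k^\ell$ would give a different, equally valid bijection.

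There is no real obstacle here — the statement is essentially a restatement of the universal property of a basis. The only point that deserves a sentence of care is the well-definedness of $\varphi_{\mathbf{b}}$: a linear map out of $X$ is determined by, and may be freely prescribed on, the elements of a basis, so $b_i \mapsto e_i$ does extend uniquely to a linear map, and bijectivity of that map follows from the fact that it sends a spanning, linearly independent tuple to a spanning, linearly independent tuple. I would keep the write-up to a few lines, emphasizing that both directions send bases to bases and that the round trips are identities.
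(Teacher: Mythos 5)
Your proposal is correct and follows essentially the same route as the paper: send an ordered basis to the unique isomorphism carrying it to the standard basis of $k^\ell$, and conversely pull back the standard basis along $\psi^{-1}$. The only cosmetic difference is that the paper first fixes an arbitrary permutation $\sigma\in\Sigma_\ell$ and sends $x_i\mapsto e_{\sigma(i)}$ (you take $\sigma=\mathrm{id}$), while you additionally spell out that the two assignments are mutually inverse, which the paper leaves implicit.
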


\begin{proof}
Choose a permutation $\sigma \in \Sigma_{\ell}$, where $\Sigma_{\ell}$ is the symmetric group of $\ell$ elements. Denote the standard basis elements of $k^{\ell}$ by  $e_1, \ldots, e_{\ell}$. Suppose we have an ordered basis $(x_1, \ldots , x_{\ell})$ of $X$. Then there is a unique isomorphism $T : X \to k^{\ell}$ such that $T(x_i) = e_{\sigma(i)}$ for each $1 \le i \le \ell$.

On the other hand, given an isomorphism $T : X \to k^{\ell}$, the inverse images of the standard basis $(T^{-1}e_{\sigma(1)}, \ldots , T^{-1}e_{\sigma(\ell)})$ give an ordered basis of $X$.
\end{proof}

\begin{corollary}
\label{corollary_on_order_bases}
One has a bijection between 
\begin{center}
pairs of ordered bases of finite-dimensional vector spaces $V$ and $W$ 
and  \\ 
pairs of isomorphisms 
$(f,g)\in \Hom(V,W)\times \Hom(W,V)$.
\end{center}
\end{corollary}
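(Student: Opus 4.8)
The plan is to obtain this corollary as two independent applications of Lemma~\ref{lemma_bijection_isom}, one to $V$ and one to $W$, glued together through $k^{\ell}$. First I would dispose of the degenerate case: if $\dim V \neq \dim W$ there are no isomorphisms $V \to W$, so one reads the corollary under the hypothesis $\ell := \dim V = \dim W$, which is automatic in its application to Lemma~\ref{lemma_characterization_of_pairs} (there $S_1$ and $S_2$ force $\dim V_I = \dim W_I$).

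Next I would fix, once and for all, an isomorphism $\lambda : V \to k^{\ell}$ and an isomorphism $\mu : W \to k^{\ell}$; by Lemma~\ref{lemma_bijection_isom} each such choice is nothing more than a choice of ordered basis, so this auxiliary data is harmless though non-canonical. The correspondence I would write down sends a pair $(f,g)$ with $f\in\Hom(V,W)$ and $g\in\Hom(W,V)$ both isomorphisms to the pair of ordered bases that corresponds, under Lemma~\ref{lemma_bijection_isom} applied to $V$ and to $W$ respectively, to the isomorphisms $\mu\circ f : V \to k^{\ell}$ and $\lambda\circ g : W \to k^{\ell}$.

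To verify that this is a bijection I would factor it. The assignment $(f,g)\mapsto(\mu\circ f,\ \lambda\circ g)$ is a bijection from the set of pairs $(f,g)$ of isomorphisms $V\to W$ and $W\to V$ onto the set of pairs of isomorphisms $V\to k^{\ell}$ and $W\to k^{\ell}$, with inverse $(h_1,h_2)\mapsto(\mu^{-1}\circ h_1,\ \lambda^{-1}\circ h_2)$; here one uses only that post-composition with a fixed isomorphism is a bijection carrying isomorphisms to isomorphisms. Composing this with the product of the two bijections furnished by Lemma~\ref{lemma_bijection_isom} then lands in pairs of ordered bases of $V$ and $W$, and a composite of bijections is a bijection.

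I do not expect a genuine obstacle: the statement is really just a repackaging of Lemma~\ref{lemma_bijection_isom}. The only points deserving attention are bookkeeping — getting the direction of composition right (one post-composes $f$ with $\mu : W\to k^{\ell}$, not the other way around, and similarly post-composes $g$ with $\lambda : V \to k^{\ell}$), checking that these post-composition maps are bijective precisely because $\lambda$ and $\mu$ are invertible, and noting, as in Lemma~\ref{lemma_bijection_isom} itself, that the resulting bijection is non-canonical, depending on the chosen $\lambda$ and $\mu$. As a consistency check, over a finite field both sides have $|\GL_{\ell}(k)|^{2}$ elements.
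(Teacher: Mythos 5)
Your argument is correct and is essentially the intended derivation: the paper offers the statement as an immediate consequence of Lemma~\ref{lemma_bijection_isom} applied once to $V$ and once to $W$, with the non-canonical identifications with $k^{\ell}$ (your fixed $\lambda$, $\mu$) absorbed exactly as you do. Your remark that the statement is to be read with $\dim V=\dim W=\ell$ is also the right reading, matching its use for $T[v]$ and $T'[fv]$ in Theorem~\ref{thm_gen_nilpotent_two_vs}.
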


\subsection{Nilpotent pairs of vector spaces and balanced vectors}
\label{subsection_nilp_pairs_bal_vec}
We will continue to use the notations from Section~\ref{subsection_lin_algebra}.
Consider the endomorphisms 
    $T=gf:V\lra V$ 
and 
    $T'=fg:W\lra W$. 
The pair $(f,g)$ of linear maps is a \emph{nilpotent pair} if $T\in \mathcal{N}(V)$, equivalently, $T'\in \mathcal{N}(W)$. Denote the set of nilpotent pairs $(f,g)$ for spaces $V$ and $W$ 
by 
$\mathcal{N}(V,W)$. 

Let 
$v\in V$.
We form the subspace $T[v]$ of $V$, spanned by the linearly independent vectors 
\[ 
v,Tv,\dots, T^{a-1}(v)\not= 0,  
\] 
with 
    $T^{a}(v)=0$,
and the subspace 
    $T'[fv]$ of $W$, 
    which is the span of linearly independent vectors 
\[ 
fv, T' (fv),
\ldots, (T')^{\ell-1} (fv) \not= 0, 
\] 
with 
$(T')^{\ell}(fv) = 0$.

We say that the vector $v$ is \emph{balanced} if $a = \ell$, i.e., the spaces $T[v]$ and $T'[fv]$ have the same dimension, so that $f$ restricts to an isomorphism $T[v]\stackrel{\simeq }{\lra} T'[fv]$. We call $a = \ell$ the \emph{length} of 
$v$. 
In the case that  
    $\ell=a-1$, so the second subspace has one dimension less than the first, i.e., $\dim T'[fv]=\dim T[v]-1$, we say that $v$ is \emph{unbalanced} in this latter case.

Figure~\ref{fig_00009} 
    shows the structure of images of 
    $v$ 
    under compositions of the maps 
    $f$ and $g$ 
    when $v$ 
    is balanced.

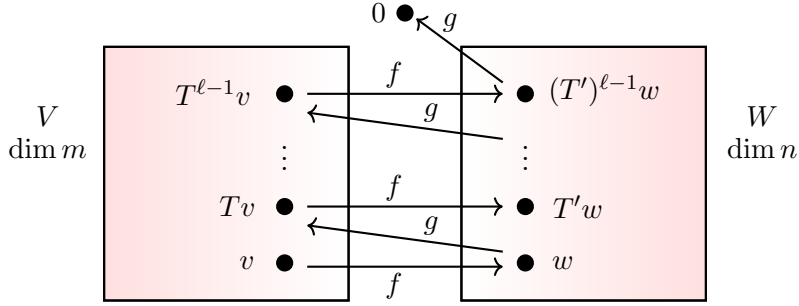
\begin{figure}
    \centering
\begin{tikzpicture}[scale=0.5,decoration={
    markings,
    mark=at position 0.5 with {\arrow{>}}}]
 
%%%%%%%%%%%%%%%%%%%%%%%%%%%%%%%%%%%%%%%%%%%%%%%%%%%%%

\begin{scope}[shift={(0,0)}]
% \draw[thin,yellow] (0,0) grid (5,6);

\draw[thick,shade,left color=pink!50, right color=white] (-1.5,0) rectangle (5,6.75);

\begin{scope}[shift={(8,0)}]
\draw[thick,shade, left color=white, right color=pink!50] (0,0) rectangle (6.5,6.75);
\end{scope}

\draw[thick] (-1.5,0) rectangle (5,6.75);

\node at (-3,4.9) {$V$};

\node at (-3,4.05) {$\dim m$};

\draw[thick,fill] (3.5,5.5) arc (0:360:2mm);

\node at (1.4,5.55) {$T^{\ell - 1}v$};

\node at (3.3,4) {$\vdots$};

\draw[thick,fill] (3.5,2.5) arc (0:360:2mm);

\node at (2.05,2.5) {$Tv$};

\draw[thick,fill] (3.5,1) arc (0:360:2mm);

\node at (2.3,1) {$v$};

\node at (5.8,7.65) {$0$};

\draw[thick,fill] (6.7,7.65) arc (0:360:2mm);

\node at (7.7,7.4) {$g$};

\draw[thick,<-] (6.8,7.5) -- (9.1,5.8);

\node at (6.2,6) {$f$};

\draw[thick,->] (3.9,5.5) -- (9.1,5.5);

\node at (7.2,5.00) {$g$};

\draw[thick,<-] (3.9,5) -- (9.1,4.3);

\node at (6.2,3) {$f$};

\draw[thick,->] (3.9,2.5) -- (9.1,2.5);

\draw[thick,->] (3.9,0.9) -- (9.1,0.9);

\node at (6.2,0.4) {$f$};

\node at (7.2,2.00) {$g$};

\draw[thick,<-] (3.9,2) -- (9.1,1.3);

\end{scope}

%%%%%%%%%%%%%%%%%%%%%%%%%%%%%%%%%%%%%%%%%%%%%%%%%%%%%

\begin{scope}[shift={(8,0)}]
% \draw[thin,yellow] (0,0) grid (5,6);

\draw[thick] (0,0) rectangle (6.5,6.75);

\node at (8,4.9) {$W$};

\node at (8,4.05) {$\dim n$};

\draw[thick,fill] (1.9,5.5) arc (0:360:2mm);

\node at (3.8,5.6) {$(T')^{\ell - 1}w$};

\node at (1.7,4) {$\vdots$};

\draw[thick,fill] (1.9,2.5) arc (0:360:2mm);

\node at (3.1,2.5) {$T'w$};

\draw[thick,fill] (1.9,1) arc (0:360:2mm);

\node at (2.7,1) {$w$};

\end{scope}

%%%%%%%%%%%%%%%%%%%%%%%%%%%%%%%%%%%%%%%%%%%%%%%%%%%%%

\end{tikzpicture}
    \caption{This is an example of a balanced vector $v$ of length $\ell$ and its images under the compositions of the linear maps 
    $f:V\rightarrow W$ 
        and 
    $g:W\rightarrow V$.}
    \label{fig_00009}
\end{figure}

Generalizing \cite[Theorem 5]{Lei21}, we have the following result:

\begin{theorem}
\label{thm_gen_nilpotent_two_vs}
There is a bijection betwen 
\[
\{ (f, g, v): gf\in \mathcal{N}(V), v \mbox{ is balanced} \} 
\mbox{ and }
\Hom(V,W)\times \Hom(W,V).
\]
\end{theorem}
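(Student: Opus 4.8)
The plan is to produce the bijection explicitly in both directions, reducing bookkeeping through Lemma~\ref{lemma_characterization_of_pairs} and arguing by induction on $\dim V+\dim W$. First I would use Lemma~\ref{lemma_characterization_of_pairs} to rewrite the right-hand side: a pair $(F,G)\in\Hom(V,W)\times\Hom(W,V)$ is the same datum as a Fitting-type decomposition $V=V_I\oplus V_N$, $W=W_I\oplus W_N$ (with $\dim V_I=\dim W_I=:r$), a pair of isomorphisms $(S_1,S_2)$ between $V_I$ and $W_I$, and a nilpotent pair on $(V_N,W_N)$. So it suffices to build, for each $r$, a bijection between $\{(f,g,v):gf\in\mathcal{N}(V),\ v\text{ balanced of length }r\}$ and the set of such triples with $\dim V_I=r$; the engine is a Leinster-style ``loop-closing'' move that turns the cyclic subspace attached to the balanced vector into the invertible block.

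Next I would build the forward map. Given $(f,g,v)$ with $T=gf\in\mathcal{N}(V)$ and $v$ balanced of length $a$, form the $a$-dimensional cyclic subspaces $C_V=T[v]\subseteq V$ and $C_W=T'[fv]\subseteq W$. From $fT^iv=(T')^i(fv)$ one checks that $f$ carries $C_V$ isomorphically onto $C_W$, while $g$ carries $C_W$ onto $T(C_V)$, a codimension-one subspace of $C_V$, with $(T')^{a-1}(fv)$ spanning $\ker(g|_{C_W})$; in the bases $(v,Tv,\dots,T^{a-1}v)$ and $(fv,T'fv,\dots,(T')^{a-1}fv)$ the operator $f|_{C_V}$ is the identity and $g|_{C_W}$ is the nilpotent down-shift with zero last column. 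The heart of the construction is to replace $g|_{C_W}$ by $\widetilde g$ that additionally sends $(T')^{a-1}(fv)\mapsto v$, turning $(f|_{C_V},\widetilde g|_{C_W})$ into a pair of isomorphisms $C_V\leftrightarrow C_W$; one then declares $(V_I,W_I):=(C_V,C_W)$ with invertible part $(S_1,S_2):=(f|_{C_V},\widetilde g|_{C_W})$, takes a complement $(\widehat V,\widehat W)$ adapted to $(f,g)$ for the nilpotent part, and reassembles via Lemma~\ref{lemma_characterization_of_pairs}. The degenerate case $v=0$ (length $0$) corresponds to $r=0$, where $(f,g)$ is simply its own nilpotent pair.

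For the inverse I would start from $(F,G)$, take its canonical Fitting decomposition, and ``open'' the invertible part $(S_1,S_2)$ on $(V_I,W_I)$ back into a nilpotent pair together with a cyclic balanced vector of length $r=\dim V_I$, gluing it to the nilpotent part on $(V_N,W_N)$ by the inductive hypothesis; the base case $V=W=0$ is trivial. I expect the real content — and the main obstacle — to be showing this opening/closing is well defined and reversible. One must pin down the adapted complement $\widehat V$ canonically (or prove the output is independent of the choice), and, more seriously, reconcile an apparent counting mismatch: the Jordan-block pair on $C_V$ only accounts for invertible parts $(S_1,S_2)$ for which $S_2S_1$ is of regular cyclic type, whereas $(S_1,S_2)$ is arbitrary. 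The resolution I would pursue is that the off-diagonal components of $f$ and $g$ relative to the splittings $V=C_V\oplus\widehat V$, $W=C_W\oplus\widehat W$ — which $(f,g)$ need not respect, since an $(f,g)$-invariant complement to the invariant pair $(C_V,C_W)$ need not exist — carry exactly the compensating freedom, so that the loop-opening and the recursion must be arranged to transfer that freedom correctly. Once this matching is in place, comparing dimensions through Lemma~\ref{lemma_characterization_of_pairs} shows the two maps are mutually inverse, completing the induction.
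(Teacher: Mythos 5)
Your overall architecture is the paper's: reduce the right-hand side via Lemma~\ref{lemma_characterization_of_pairs} to (an invertible part between $V_I$ and $W_I$) plus (the choice of those subspaces and of their complements) plus (a nilpotent pair on the complements), and extract from $(f,g,v)$ the cyclic subspaces $T[v]$ and $T'[fv]$. But the step you yourself flag as the main obstacle is a genuine gap, and the repair you sketch cannot work. Closing the loop by sending $(T')^{a-1}(fv)\mapsto v$ produces one very particular pair of isomorphisms: in the bases $(v,Tv,\dots,T^{\ell-1}v)$ and $(fv,T'fv,\dots,(T')^{\ell-1}fv)$ you get $S_1=\mathrm{id}$ and $S_2$ the cyclic shift, so $S_2S_1$ lies in a single conjugacy class, and moreover the construction forgets which cyclic generator was $v$; arbitrary invertible parts are not reached, and distinct triples collapse. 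The compensating freedom cannot come from the off-diagonal blocks: relative to a chosen complement, the block of $f$ in $\Hom(T[v]^{\perp},T'[fv])$ and the block of $g$ in $\Hom(T'[fv]^{\perp},T[v])$ are exactly the data that, via Lemma~\ref{lemma_canon_complement}, record which complements of $V_I$ and $W_I$ occur as the Fitting subspaces $V_N,W_N$ of the output pair. Already over $\mathbb{F}_q$ this is an exact match, $q^{\ell(m-\ell)}q^{\ell(n-\ell)}$ on each side (compare the count in the proof of Theorem~\ref{thm_cardinality_nilp_triple}), so there is no spare capacity in those blocks to absorb the gap between the cyclic-type pairs your closure produces and arbitrary pairs $(S_1,S_2)$.

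The paper's resolution is not to close the loop at all. A balanced vector $v$ of length $\ell$ hands you a pair of ordered bases, $(v,Tv,\dots,T^{\ell-1}v)$ of $T[v]$ and $(fv,T'fv,\dots,(T')^{\ell-1}fv)$ of $T'[fv]$, and it is this pair of bases that carries the $(S_1,S_2)$-freedom: by Lemma~\ref{lemma_bijection_isom} and Corollary~\ref{corollary_on_order_bases}, a pair of ordered bases of two $\ell$-dimensional spaces is (non-canonically) the same data as an arbitrary pair of isomorphisms between them. Substituting that conversion for your loop-closing, keeping the off-diagonal blocks as the complement data via Lemma~\ref{lemma_canon_complement}, and carrying the nilpotent pair on the complements along unchanged yields the paper's argument; in particular no induction on $\dim V+\dim W$ and no canonically adapted complement are needed.
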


We also refer to \cite[Theorem 3.6]{CIKLR25}.

\begin{proof}
    Let $f, g$ be a nilpotent pair with a balanced vector $v$ of length $\ell$. We have subspaces $T[v]$ and $T'[fv]$ of the same dimension, each equipped with ordered bases $(v, Tv, \ldots, T^{\ell - 1}v)$ and $(fv, T'fv, \ldots, (T')^{\ell - 1} fv)$, respectively. In these bases, $f|_{T[v]}$ is the identity matrix and $g|_{T'[fv]}$ is a nilpotent Jordan block of size $\ell$, i.e.,
    $(J_{\ell})_{ij} = \delta_{i, j+1}, \text{ indices taken modulo }\ell$, by construction.

    We have the complements 
    $T[v]^{\perp}$ and $T'[fv]^{\perp}$ of 
    $T[v]$ and $T'[fv]$ 
    in $V$ and $W$, 
    respectively, by Corollary~\ref{corollary_on_order_bases}.  
This is due to an analogy with Tom Leinster's way of choosing complements. We do not initially get a canonical choice of complement for our subspaces, but we do get restrictions of the given linear maps to some complement. The composites give a canonical choice of the complement of each subspace.    
    With respect to these subspaces, 
    the maps $f$ and $g$ are given by:
    \begin{equation}
    \label{eqn_f_g_nilp_pair}
    f 
    = 
    \kbordermatrix{
                   &  T[v] & T[v]^{\perp}  \\ 
    T'[fv]         &  \I_{\ell}  & A \\
    T'[fv]^{\perp} &  0    & f|_{T[v]^{\perp}}
    }, 
    \hspace{6mm}
    g 
    = 
    \kbordermatrix{
                & T'[fv] & T'[fv]^{\perp} \\ 
    T[v]        & \J_{\ell}  & B \\
    T[v]^{\perp}&     0  & g|_{T'[fv]^{\perp}}
    },
    \end{equation}
    where $\I_{\ell}$ is the $\ell\times \ell$ identity matrix, $\J_{\ell}$ is the $\ell\times \ell$ Jordan block with $1$'s immediately below the diagonal, 
    $A$ is an $\ell\times (m-\ell)$ block, and 
    $B$ is an $\ell\times (n-\ell)$ block.
    Since 
    \[ 
    gf = 
    \begin{pmatrix}
    C & D \\ 
    0 & E \\ 
    \end{pmatrix}, 
    \] 
    where 
    $C$ is a $\dim T[v]\times \dim T[v]$-block,
    $D$ is a $\dim T[v] \times \dim T[v]^{\perp}$-block, and 
    $E$  is a $\dim T[v]^{\perp} \times \dim T[v]^{\perp}$-block, 
        and
    $gf\in \mathcal{N}(V)$ if and only if $C\in\mathcal{N}(T[v])$ and $E\in\mathcal{N}(T[v]^{\perp})$, 
    we see that the restricted 
    $(f|_{T[v]^{\perp}}, g|_{T'[fv]^{\perp}})$ 
    is a nilpotent pair. By Corollary~\ref{corollary_on_order_bases}, 
    the data of subspaces with ordered bases is equivalent to a pair of isomorphisms 
    $S_1 : T[v] \to T'[fv]$ and 
    $S_2 : T'[fv] \to T[v]$. 
    We replace the upper-left blocks of~\eqref{eqn_f_g_nilp_pair} 
    with more generic 
    $S_1$ and $S_2$ 
    blocks
    since the original blocks gave the data of ordered bases of   
    $T[v]$ and   
    $T'[fv]$,   
    and we are simply replacing them with their corresponding isomorphisms.   

    Finally, there may be many choices of complement for $T[v]$ and $T'[fv]$. In order to resolve this, we can apply \ref{lemma_canon_complement}. We observe that $BA \in \Hom(T[v]^\perp, T[v])$ corresponds exactly to a choice of complement for $T[v]$ in $V$. Likewise, $AB \in \Hom(T'[fv]^\perp, T'[fv])$ corresponds exactly to a choice of complement for $T'[fv]$ in $W$. So, up to relabeling the rest of the blocks (for notational simplicity), we have
    \[ 
    f' = \begin{pmatrix}
        S_1 & 0 \\
        0 & N_1
    \end{pmatrix} \quad \mbox{ and } \quad  
    g' = \begin{pmatrix}
        S_2 & 0 \\
        0 & N_2
    \end{pmatrix}, 
    \]
    where the blocks decompose $V, W$ into complementary subspaces. The blocks $N_i$ compose to nilpotent operators, and $S_i$ compose to isomorphisms. 
    By Lemma~\ref{lemma_characterization_of_pairs}, 
    we  obtain a unique pair of linear maps
    \[  
    (f', g') \in \Hom(V, W) \times \Hom(W, V). 
    \]
This concludes the proof.
\end{proof}

\subsection{Nilpotent pairs of vector spaces over a finite field}
\label{subsection_nilp_pairs_bal_fin_field}
Let $V$ and $W$ 
    be $m$ and $n$-dimensional vector spaces, 
    respectively, 
    over the finite field 
$k = \mathbb{F}_q$, 
where $q$ is a prime power.
We want the probability that the pair $(f,g)$ is nilpotent, i.e., $fg\in \mathcal{N}(W)$ for a given 
$m$ and $n$. 
Recall the 
$q$-binomial coefficient
\begin{equation}
\label{eqn_q_binom_coefficient}
\qbinom{m}{r}{q} 
    := 
    \prod_{i=0}^{r-1}(q^m - q^i) \Big/ 
    \prod_{i=0}^{r-1}(q^r - q^i), 
\end{equation}
where if $r=0$, 
    then we define 
$\prod_{i=0}^{r-1}(q^k - q^i):= 1$.
Equation~\eqref{eqn_q_binom_coefficient} 
    represents the number of 
    $r$-dimensional subspaces of an 
    $m$-dimensional vector space over 
    $\mathbb{F}_q$.
\begin{lemma}
\label{lemma_number_rank_r_linear_maps}
    Let $0\le r\le \min\{m,n\}$. 
    The number of rank $r$ linear maps $f:V\rightarrow W$ is given by  
    \[ 
    \qbinom{m}{r}{q} 
    \qbinom{n}{r}{q} 
    \prod_{i=0}^{r-1} (q^r - q^i).
    \]
\end{lemma}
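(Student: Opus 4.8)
The plan is to decompose a rank $r$ linear map through its image and count the independent pieces of data that determine it. Every $f\colon V\to W$ with $\rank f = r$ factors canonically as $V \twoheadrightarrow V/\ker f \xrightarrow{\ \bar f\ } \im f \hookrightarrow W$, where $\bar f$ is an isomorphism of $r$-dimensional vector spaces. Conversely, a triple $(K,I,\phi)$ consisting of a subspace $K\subseteq V$ of dimension $m-r$, a subspace $I\subseteq W$ of dimension $r$, and an isomorphism $\phi\colon V/K\to I$ determines the rank $r$ map $V \twoheadrightarrow V/K \xrightarrow{\phi} I \hookrightarrow W$, and these two assignments are mutually inverse. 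So it suffices to count such triples.

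First I would count the kernels: by~\eqref{eqn_q_binom_coefficient} there are $\qbinom{m}{m-r}{q}$ subspaces of $V$ of dimension $m-r$, and a short manipulation of the defining product shows $\qbinom{m}{m-r}{q}=\qbinom{m}{r}{q}$. Next there are $\qbinom{n}{r}{q}$ choices for the image $I$. Finally, with $K$ and $I$ fixed, the number of isomorphisms $V/K \xrightarrow{\ \sim\ } I$ between two $r$-dimensional $\F_q$-spaces equals $|\GL_r(\F_q)| = \prod_{i=0}^{r-1}(q^r-q^i)$, computed in the usual way by choosing the images of a fixed ordered basis one vector at a time and avoiding, at the $i$-th step, the $q^i$ vectors already spanned. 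Multiplying the three factors gives the stated count.

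The only point needing care is verifying that $f\mapsto(\ker f,\im f,\bar f)$ really is a bijection onto the set of such triples; this is the first isomorphism theorem together with the observation that the canonical projection, $\bar f$, and the inclusion recover $f$ uniquely, so it is routine and is the sole (minor) obstacle. Equivalently, one could count injective linear maps out of $V/\ker f$ into $W$ directly, using the reorganization of the same product $\prod_{i=0}^{r-1}(q^n-q^i)=\qbinom{n}{r}{q}\prod_{i=0}^{r-1}(q^r-q^i)$, which makes the role of the $q$-binomial coefficient $\qbinom{n}{r}{q}$ transparent.
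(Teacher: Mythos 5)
Your proof is correct, but it takes a slightly different route from the paper's. The paper first chooses the $r$-dimensional image $I\subseteq W$ (giving $\qbinom{n}{r}{q}$ choices) and then counts surjective linear maps $V\to I$ directly as full-rank $r\times m$ matrices, i.e.\ as ordered $r$-tuples of linearly independent row vectors, obtaining $\prod_{i=0}^{r-1}(q^m-q^i)$; the stated formula then drops out of the defining expression~\eqref{eqn_q_binom_coefficient} for $\qbinom{m}{r}{q}$. You instead refine the surjection into two independent pieces of data, the kernel $K$ and the induced isomorphism $V/K\xrightarrow{\sim} I$, so that your count is organized as (kernel)\,$\times$\,(image)\,$\times$\,$|\GL_r(\F_q)|$ and the three factors of the formula appear verbatim, at the small extra cost of invoking the first isomorphism theorem and the symmetry $\qbinom{m}{m-r}{q}=\qbinom{m}{r}{q}$. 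Both arguments are complete and short; yours makes the structure of the answer transparent without any rearrangement, while the paper's avoids the quotient construction and the symmetry identity by counting matrices directly. Your closing remark, counting injective maps of the $r$-dimensional quotient into $W$ via $\prod_{i=0}^{r-1}(q^n-q^i)=\qbinom{n}{r}{q}\prod_{i=0}^{r-1}(q^r-q^i)$, is essentially the paper's computation performed on the $W$-side instead of the $V$-side, so the two proofs are dual reorganizations of the same count.
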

\begin{proof}
    We proceed in two steps.
    \begin{enumerate}
        \item Choose an $r$-dimensional subspace $I$ of $W$ to be the image of $f$. There are exactly $\qbinom{n}{r}{q}$ choices.
        \item Count the number of linear maps from $V$ to $W$ whose image is equal to $I$. This is equal to the number of surjective linear maps from $V$ to $I$. Now choose a basis for $V$ and $W$. Such a linear map can be represented as an $r\times m$ matrix of rank $r$, which means it has full rank. The number of such matrices (by choosing row vectors) is the number of $r$ linearly independent vectors in $V$, which is $\prod_{i=0}^{r-1}(q^m - q^i)$. 
    \end{enumerate}
    Using \eqref{eqn_q_binom_coefficient}, we obtain the desired result.
\end{proof}

Let 
$\mathcal{N}(V,W)$ be the set of nilpotent pairs $(f,g)$ 
as above 
$(f:V\lra W$ and $g:W\lra V)$. 
Let 
$\mathcal{N}_{m,n} 
= |\mathcal{N}(V,W)|$ 
be the cardinality of 
$\mathcal{N}(V,W)$. 

\begin{theorem}
\label{thm_nilpotent_pairs_fin_field}
    The number of nilpotent pairs is given by
    \begin{equation}
    \label{eqn_nilpotent_pairs_Fq}
		\mathcal{N}_{m,n} 
        = 
        \sum_{r=0}^{\min\{m,n\}}  \qbinom{m}{r}{q} \qbinom{n}{r}{q} q^{mn-r} \prod_{i=0}^{r-1}(q^r - q^i),
    \end{equation}
where if $r=0$, then $\prod_{i=0}^{r-1}(q^r - q^i) := 1$. 
\end{theorem}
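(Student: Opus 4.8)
The plan is to stratify $\mathcal{N}(V,W)$ according to the rank $r := \rank f$ of the first map (necessarily $0 \le r \le \min\{m,n\}$) and, for each fixed $f$ of rank $r$, to count the maps $g \in \Hom(W,V)$ for which $(f,g)$ is a nilpotent pair. Since Lemma~\ref{lemma_number_rank_r_linear_maps} already enumerates the maps $f$ of each rank, the theorem follows once we show that this number of admissible $g$ equals $q^{mn-r}$, independently of which rank-$r$ map $f$ was chosen; plugging that into $\mathcal{N}_{m,n} = \sum_{r} (\#\{\rank f = r\}) \cdot q^{mn-r}$ reproduces~\eqref{eqn_nilpotent_pairs_Fq}. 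So first I would fix $f$ of rank $r$ and set $I := \im f \subseteq W$, a subspace of dimension $r$.

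The first reduction is to pass to the image: since $\im(fg) \subseteq \im f = I$, the operator $fg$ stabilizes $I$, and one checks inductively that $(fg)^{k+1}(W) \subseteq \bigl((fg)|_I\bigr)^{k}(I)$ for all $k \ge 0$; conversely, a restriction of a nilpotent operator is nilpotent. Hence $(f,g)$ is a nilpotent pair (i.e.\ $fg \in \mathcal{N}(W)$) if and only if $(fg)|_I \in \mathcal{N}(I)$. The count then proceeds in two independent stages. The assignment $g \mapsto fg$ is a linear map $\Hom(W,V) \to \Hom(W,I)$ (viewing $f$ as a surjection onto its image); it is surjective because $f$ is, and each of its fibres is a coset of $\Hom(W,\ker f)$, hence has size $q^{n(m-r)}$. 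It remains to count the $h \in \Hom(W,I)$ with $h|_I$ nilpotent: choosing a complement $C$ of $I$ in $W$ and writing $h = (h|_I, h|_C)$ with $h|_I \in \End(I)$ and $h|_C \in \Hom(C,I)$ arbitrary, this number is $|\mathcal{N}(I)|\cdot q^{r(n-r)} = q^{r(r-1)} \cdot q^{r(n-r)} = q^{rn-r}$, where $|\mathcal{N}(I)| = q^{r(r-1)}$ is the Fine--Herstein / Leinster count of nilpotent operators on an $r$-dimensional $\mathbb{F}_q$-space. Multiplying the two stages gives $q^{n(m-r)} \cdot q^{rn-r} = q^{mn-r}$ admissible $g$ for each rank-$r$ map $f$, with the degenerate case $r=0$ (where $f=0$ and every $g$ works) consistent.

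Finally I would assemble: by Lemma~\ref{lemma_number_rank_r_linear_maps} there are $\qbinom{m}{r}{q}\qbinom{n}{r}{q}\prod_{i=0}^{r-1}(q^r - q^i)$ maps $f$ of rank $r$, each contributing $q^{mn-r}$ admissible partners $g$, and summing over $r = 0, \ldots, \min\{m,n\}$ yields exactly~\eqref{eqn_nilpotent_pairs_Fq}. (One could instead try to leverage the bijection of Theorem~\ref{thm_gen_nilpotent_two_vs} by counting balanced vectors over nilpotent pairs, but the number of balanced $v$ is not constant on $\mathcal{N}(V,W)$, so that route yields only one aggregate identity rather than the full enumeration; the rank stratification above is the cleaner path.)

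I expect the routine parts to be the rank stratification and the fibre bookkeeping. The one place to be careful is the equivalence ``$fg$ nilpotent $\iff (fg)|_{\im f}$ nilpotent'' — in particular verifying that $fg$ genuinely stabilizes $I$ and getting the containment $(fg)^{k+1}(W) \subseteq ((fg)|_I)^k(I)$ right — together with the uniformity of the fibres of $g \mapsto fg$, since these two points are precisely what makes the partner-count depend only on $\rank f$ and not on $f$ itself. Everything else reduces to Lemma~\ref{lemma_number_rank_r_linear_maps} and the classical count of nilpotent operators over a finite field.
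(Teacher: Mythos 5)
Your proposal is correct and follows essentially the same route as the paper: stratify $\Hom(V,W)$ by $\rank f$, show that each rank-$r$ map $f$ admits exactly $q^{mn-r}$ partners $g$ with $fg$ nilpotent because only the $\End(\im f)$-component of $fg$ is constrained (and is counted by the Fine--Herstein/Leinster number $q^{r(r-1)}$), then sum against Lemma~\ref{lemma_number_rank_r_linear_maps}. The only difference is presentational: the paper carries out the partner count by an explicit block-matrix computation in bases adapted to $\ker f$ and $\im f$, whereas you phrase the same count coordinate-freely via the surjection $g\mapsto fg$ onto $\Hom(W,\im f)$ with fibres $\Hom(W,\ker f)$.
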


\begin{proof}
    First, given a linear map $f:V\rightarrow W$ of rank $r$, we count the number of maps $g$ such that the composition $fg$ is nilpotent.
    Without loss of generality, let $\{v_{r+1},\ldots,v_m\}$ be a basis for $\ker f\subset V$. We extend it to a basis $\{ v_1,\ldots, v_m\}$ of $V$. 
    Since $f$ is a linear map, 
    $\{f(v_1),\ldots,f(v_r)\}$ forms a basis of $\im f\subset W$. We now extend it to a basis 
    \[ 
    \{w_1=f(v_1),\ldots, w_r=f(v_r),w_{r+1},\ldots, w_{n}\}
    \] 
    of $W$.
        
    With respect to these two bases for $V$ and $W$, the $n\times m$ matrix representation of $f$ 
    is given by 
        \[ 
        F 
        = 
        \begin{pmatrix}
        \I_r & 0 \\ 
        0    & 0
        \end{pmatrix},
        \]
        where $\I_r$ is the $r\times r$ identity matrix.
        Now let 
        \[
        G = 
        \begin{pmatrix}
        A & B \\
        C & D
        \end{pmatrix}
        \] 
        be the matrix representation of $g$, where $A$ is $r \times r$, $B$ is $r \times (n-r)$, $C$ is $(m-r) \times r$, and $D$ is $(m-r) \times (n-r)$ matrices.
        The matrix representation for $fg$ is the product:
        \[
        FG = 
        \begin{pmatrix} 
        \I_r & 0 \\ 
        0 & 0 
        \end{pmatrix} 
        \begin{pmatrix} 
        A & B \\ 
        C & D 
        \end{pmatrix} 
        = 
        \begin{pmatrix} 
        A & B \\ 
        0 & 0 
        \end{pmatrix},
        \]
        which is a block upper-triangular matrix. Such a matrix is nilpotent if and only if its diagonal blocks are nilpotent, which in our case, $A$ must be nilpotent. By ~\cite[Theorem 5]{Lei21}, the cardinality of such $r\times r$ nilpotent 
        $A\in \M_r(\mathbb{F}_q)$ is $q^{r(r-1)}$. 
        There are no restrictions for blocks $B,C$ and $D$. So the total number of such matrices $G$ (and thus maps $g$) for a fixed $f$ of rank $r$ is given by:
        \[ 
    q^{r(r-1)}  q^{r(n-r)}  q^{r(m-r)}  q^{(m-r)(n-r)} 
        = q^{mn - r}. 
        \]
    Therefore, for any $f$ of rank $r$, 
    there are the same number of linear maps 
    $g$ such that the composition $fg$ is nilpotent.

By Lemma~\ref{lemma_number_rank_r_linear_maps}, we know the number of rank $r$ linear maps between $V$ and $W$. Summing over the rank of $f$, the result follows.
\end{proof}

\begin{proposition}
Equation~\eqref{eqn_nilpotent_pairs_Fq} 
equals  
\[
    \sum_{T\in \Hom(V,W)} 
        q^{mn-\rank(T)}.
\]
\end{proposition}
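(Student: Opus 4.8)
The plan is to observe that this proposition is essentially a bookkeeping rearrangement of what was already established in the proof of Theorem~\ref{thm_nilpotent_pairs_fin_field}. In that proof we showed that for a \emph{fixed} linear map $f \in \Hom(V,W)$ of rank $r$, the number of maps $g \in \Hom(W,V)$ making $(f,g)$ a nilpotent pair is exactly $q^{mn-r}$, a quantity depending on $f$ only through $\rank(f)$. Summing this count over all $f \in \Hom(V,W)$ yields at once
\[
\mathcal{N}_{m,n} = |\mathcal{N}(V,W)| = \sum_{f \in \Hom(V,W)} q^{mn-\rank(f)},
\]
which is the claimed identity after renaming the dummy variable $f$ as $T$.

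Alternatively — and this is the route I would write out if one prefers to derive the statement directly from the closed form \eqref{eqn_nilpotent_pairs_Fq} — one partitions $\Hom(V,W)$ by rank. Since a map $V \to W$ has rank $r$ for some $0 \le r \le \min\{m,n\}$, we get
\[
\sum_{T \in \Hom(V,W)} q^{mn-\rank(T)} = \sum_{r=0}^{\min\{m,n\}} \#\{T \in \Hom(V,W) : \rank(T) = r\}\, q^{mn-r}.
\]
Invoking Lemma~\ref{lemma_number_rank_r_linear_maps}, which gives $\#\{T : \rank(T) = r\} = \qbinom{m}{r}{q}\qbinom{n}{r}{q}\prod_{i=0}^{r-1}(q^r - q^i)$, and substituting this in makes the right-hand side agree term-by-term with \eqref{eqn_nilpotent_pairs_Fq}, finishing the proof.

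There is no genuine obstacle here: both arguments are short. The only point deserving a word of care is the range of the index $r$ — one must remark that no map $V \to W$ has rank exceeding $\min\{m,n\}$, so the sum over ranks naturally has index set $\{0,1,\dots,\min\{m,n\}\}$ and no terms are spuriously added or omitted; this is immediate from the definition of rank. The substance lies entirely in Theorem~\ref{thm_nilpotent_pairs_fin_field} and Lemma~\ref{lemma_number_rank_r_linear_maps}; the proposition merely repackages \eqref{eqn_nilpotent_pairs_Fq} in the rank-weighted-count form, which is the natural two-space analogue of Leinster's original formula over a single vector space.
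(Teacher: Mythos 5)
Your proposal is correct and its second argument --- partitioning $\Hom(V,W)$ by rank, applying Lemma~\ref{lemma_number_rank_r_linear_maps} for the number of rank $r$ maps, and matching the weight $q^{mn-r}$ term by term with \eqref{eqn_nilpotent_pairs_Fq} --- is essentially the paper's own proof. The first argument, via the per-$f$ count from the proof of Theorem~\ref{thm_nilpotent_pairs_fin_field}, is an equally valid minor variant.
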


\begin{proof}
The number of rank $r$ maps $T:V\lra W$ is given by the expression in Lemma~\ref{lemma_number_rank_r_linear_maps}. 
In \eqref{eqn_nilpotent_pairs_Fq}, 
    each map is weighted with the coefficient $q^{mn - r}$.
\end{proof}

\begin{lemma}
\label{lem_basic_facts}
    Let $(f,g)$ 
    be a nilpotent pair. 
    \begin{enumerate}
    \item If $v\in V$ or $w\in W$ is balanced, then every vector in the subspace $T[v]\subset V$ or in $T'[w]\subset W$, respectively, is balanced. 
    \item  If $v\in V$ is unbalanced, 
    then every vector in  
    $T[v]\setminus\{0\}$ 
    is unbalanced. 
    Similarly, 
    if $w\in W$ 
    is unbalanced, 
    then every vector in $T'[w]\setminus\{0\}$ 
    is unbalanced. 
    \item If 
    $v\in V$ 
    is balanced and    
    $f(v) \not= 0$, 
    then the vector 
    $f(v)$ 
    is unbalanced. 
    If 
    $w\in W$ 
    is balanced and 
    $g(w) \not= 0$, 
    then the vector   
    $g(w)$    
    is unbalanced.   
    \item If  
    $v\in V$  
    is unbalanced, 
    then the vector 
    $f(v)\in W$ 
    is balanced. 
    Similarly, 
    if 
    $w\in W$ 
    is  unbalanced, 
        then the vector $g(w)\in V$ is balanced. 
    \end{enumerate}
\end{lemma}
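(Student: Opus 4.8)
The plan is to deduce all four parts from a single structural fact about how $f$ and $g$ move the chains generated by $T=gf$ and $T'=fg$. Everything rests on the identities $T'f = fT$ and $Tg = gT'$, hence $(T')^k f = fT^k$ and $T^k g = g(T')^k$ for all $k\ge 0$, which are immediate by induction from $T=gf$ and $T'=fg$. For a nonzero $v\in V$ put $a(v):=\dim T[v]$, i.e.\ the least $a$ with $T^a v = 0$, and likewise $a'(w):=\dim T'[w]$ for $w\in W$; then $v\in V$ is balanced iff $a(v)=a'(fv)$ and unbalanced iff $a(v)=a'(fv)+1$, and the symmetric statements hold for $w\in W$ with the roles of $f,g$ and of $T,T'$ swapped.

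First I would record the elementary Jordan-chain facts. \textbf{(a)} Any nonzero $u\in T[v]$ can be written $u=\sum_{i=j}^{a(v)-1} d_i T^i v$ with $d_j\ne 0$, and then $a(u)=a(v)-j$; in particular $a(Tv)=a(v)-1$, and by the same token $a'(T'w)=a'(w)-1$. \textbf{(b)} The \emph{dichotomy} $a'(fv)\in\{a(v)-1,\ a(v)\}$: the upper bound is clear because $(T')^{a(v)}fv = fT^{a(v)}v = 0$; and if we had $a'(fv)\le a(v)-2$, then $fT^{a(v)-2}v=(T')^{a(v)-2}fv=0$, hence $T^{a(v)-1}v=g\bigl(fT^{a(v)-2}v\bigr)=0$, contradicting the definition of $a(v)$.

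Next I would prove the image law for $f$: applying $(T')^k f = fT^k$ to $u=\sum_{i=j}^{a(v)-1} d_i T^i v$ gives $fu=\sum_{i=j}^{a(v)-1} d_i (T')^i fv\in T'[fv]$, whose bottom index in the chain $fv, T'fv,\dots$ is $j$ when $j<a'(fv)$ and which is $0$ when $j\ge a'(fv)$; combined with (a) this yields $a'(fu)=\max\{0,\ a'(fv)-j\}$. Parts (1) and (2) now drop out: if $v$ is balanced, then $a'(fv)=a(v)>j$ for every nonzero $u\in T[v]$, so $a'(fu)=a(v)-j=a(u)$ and $u$ is balanced; if $v$ is unbalanced, then $a'(fv)=a(v)-1$ forces $a'(fu)=a(u)-1$ for every nonzero $u\in T[v]$ (including the case $j=a(v)-1$, where $a'(fu)=0$ and $a(u)=1$), so $u$ is unbalanced. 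The $W$-versions follow from the symmetry $V\leftrightarrow W$, $f\leftrightarrow g$. For parts (3) and (4) I would instead apply the dichotomy to the vector $fv$ itself: since $g(fv)=Tv$, fact (a) gives $a\bigl(g(fv)\bigr)=a(v)-1$, so by the symmetric balancedness criterion $fv$ is balanced iff $a'(fv)=a(v)-1$ and unbalanced iff $a'(fv)=a(v)$; by (b) these are exactly the cases "$v$ unbalanced" and "$v$ balanced", respectively.

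The only real friction is boundary bookkeeping --- the case $a(v)=1$, where $fv$ can be $0$ and several indices collapse to $0$, and verifying the bottom-index description of $fu$ when $j$ is as large as $a'(fv)$ --- but this needs care rather than a new idea; the substance of the lemma is carried entirely by the commutation identities $(T')^k f=fT^k$, $T^k g=g(T')^k$ together with the chain facts (a) and (b).
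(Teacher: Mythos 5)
Your proof is correct and follows essentially the same route as the paper: both arguments compare chain lengths $\dim T[\cdot]$ and $\dim T'[\cdot]$ using the intertwining relations $(T')^k f = fT^k$ and $T^k g = g(T')^k$, with parts (3) and (4) coming from $\dim T[Tv]=\dim T[v]-1$ exactly as in the paper. Your bottom-index description of a general nonzero $u\in T[v]$ (and the dichotomy (b)) just makes explicit the step the paper compresses into ``extend linearly,'' so it is a slightly more careful rendering of the same argument rather than a different one.
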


\begin{proof} 
We will now prove the lemma.
    \begin{enumerate}
        \item Suppose $T[v]$ is spanned by $\{v,Tv,\ldots,T^{k}v\}$. Correspondingly, suppose $T'[fv]$ is spanned by $\{fv,T'fv,\ldots,T'^{k}fv\}$. We just need to show that $T^iv$ is balanced for each $i=1,\ldots,k$, and extend linearly. We have $\dim T[T^iv]=k-i+1=\dim T'[fT^iv]$ $=$ $\dim T'[(T')^{i}fv]$. And the result for $w\in W$ follows by symmetry.
        \item Assume $T[v]$ is spanned by $\{v,$ $Tv,$ $\ldots,$ $T^{k}v\}$ 
        but $T'[fv]$ is spanned by $\{fv,$ $\ldots,$ $(T')^{k-1}fv\}$. We need to show $T^iv$ is unbalanced for each $i$, and then extend linearly. 
        This follows from $\dim T[T^iv]=k-i+1$ and $\dim T'[fT^iv] = \dim T'[(T')^ifv]$ $=$ $k-i$. 
        Note that 0 is trivially balanced. The result for $w\in W$ follows by symmetry.
        \item We just need to note that $v$ balanced imply $\dim T[v]$ $=$ $\dim T'[fv]$, 
        and thus 
        $\dim T'[fv]$ $=$ 
        $1+\dim T[Tv]=1+\dim T[gfv]$. 
        The result for $w\in W$ follows by symmetry.
        \item We see that $v$ unbalanced imply $\dim T[v]=1+\dim T'[fv]$, and thus $\dim T'[fv]=\dim T[Tv]=\dim T[gfv]$. The result for $w\in W$ follows by symmetry.
    \end{enumerate}
This concludes the proof.
\end{proof}

We omit the proofs for the following corollaries.

\begin{corollary} 
Let a nilpotent pair 
$(f,g)$ be given.
\begin{enumerate}
\item 
    The set of balanced vectors for the pair 
    $(f,g)$ in 
    $V$ 
    is a union of $T$-stable linear subspaces of 
    $V$. 
    It is likewise for balanced vectors in 
    $W$. 
    \item 
    The set of unbalanced vectors for the pair 
    $(f,g)$ 
    in 
    $V$ 
    union with the zero vector is a union of 
    $T$-stable linear subspaces of 
    $V$. 
    It is likewise for unbalanced vectors in 
    $W$. 
    \end{enumerate}
\end{corollary}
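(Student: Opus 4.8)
\emph{Proof proposal.} The plan is to deduce both statements immediately from Lemma~\ref{lem_basic_facts}, using the elementary observation that each of the subspaces $T[v]$ is itself $T$-stable. Indeed, $T[v]$ is by definition the span of $v, Tv,\ldots,T^{a-1}v$ with $T^{a}v=0$, so applying $T$ sends each spanning vector to the next one and annihilates the top one; hence $T\,T[v]\subseteq T[v]$, and symmetrically $T'\,T'[w]\subseteq T'[w]$ for $w\in W$. Moreover, since $T$ (equivalently $T'$) is nilpotent, every $v\in V$ lies in its own subspace $T[v]$, so the family $\{T[v]\}_{v\in V}$ covers $V$.

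For part (1), let $\mathcal{B}_V$ be the set of balanced vectors in $V$. If $v$ is balanced, then by Lemma~\ref{lem_basic_facts}(1) every vector of $T[v]$ is balanced, so $T[v]\subseteq\mathcal{B}_V$; conversely $v\in T[v]$. Hence $\mathcal{B}_V=\bigcup_{v\in\mathcal{B}_V}T[v]$, which exhibits $\mathcal{B}_V$ as a union of $T$-stable linear subspaces of $V$. The assertion for balanced vectors in $W$ is obtained verbatim after replacing $(T,V)$ by $(T',W)$ and invoking the symmetric half of Lemma~\ref{lem_basic_facts}(1).

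For part (2), let $\mathcal{U}_V$ denote the set of unbalanced vectors in $V$ together with the zero vector. If $v$ is unbalanced, then $0\in T[v]$ and, by Lemma~\ref{lem_basic_facts}(2), every nonzero vector of $T[v]$ is unbalanced; hence $T[v]\subseteq\mathcal{U}_V$, and of course $v\in T[v]$. Therefore $\mathcal{U}_V=\{0\}\cup\bigcup_{v\ \text{unbalanced}}T[v]$, a union of $T$-stable linear subspaces (the summand $\{0\}=T[0]$ being needed only in the degenerate case that there are no unbalanced vectors in $V$ at all). The statement for $W$ follows in the same way with $(T,V)$ replaced by $(T',W)$.

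The mathematical content is entirely contained in Lemma~\ref{lem_basic_facts}; the only points that require care are verifying that each $T[v]$ is genuinely $T$-stable (and not merely a linear subspace), and, in part (2), the bookkeeping with the zero vector so that the set in question is exactly a union of honest subspaces rather than a union of subspaces with a point deleted. Consequently I do not anticipate any substantive obstacle beyond this routine packaging.
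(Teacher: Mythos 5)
Your proof is correct: the paper explicitly omits the proof of this corollary, and your argument is precisely the intended routine deduction from Lemma~\ref{lem_basic_facts}, together with the easy observations that each $T[v]$ is $T$-stable, that $v\in T[v]$, and that the zero vector (as $T[0]=\{0\}$) handles the bookkeeping in part (2). Nothing further is needed.
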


\begin{corollary} Suppose a nilpotent pair $(f,g)$ is given.
Let $v\in V$ be balanced and $v'\in V$ be unbalanced. 
Then the intersection 
$T[v]\cap T[v']$ is the $0$-dimensional vector space. 
\end{corollary}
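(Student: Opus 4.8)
The plan is to prove the statement by contradiction, reducing it immediately to the two propagation properties established in Lemma~\ref{lem_basic_facts}. Suppose, for contradiction, that $T[v]\cap T[v']$ contains a nonzero vector $x$. On the one hand, $x\in T[v]$ and $v$ is balanced, so by Lemma~\ref{lem_basic_facts}(1) the vector $x$ is balanced. On the other hand, $x\in T[v']\setminus\{0\}$ and $v'$ is unbalanced, so by Lemma~\ref{lem_basic_facts}(2) the vector $x$ is unbalanced.

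First I would isolate the elementary fact that no single nonzero vector can be both balanced and unbalanced: for any $x$, the integers $a=\dim T[x]$ and $\ell=\dim T'[fx]$ are well defined, and being balanced means $a=\ell$ whereas being unbalanced means $\ell=a-1$; these two equalities are incompatible. Applying this observation to the vector $x$ produced above yields the desired contradiction, so $T[v]\cap T[v']$ contains no nonzero vector and is therefore the zero subspace. The degenerate cases are harmless: if $v=0$ then $T[v]=\{0\}$ and the intersection is trivial, and $v'=0$ is balanced (as noted in the proof of Lemma~\ref{lem_basic_facts}), hence not unbalanced, so that case does not arise.

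I do not expect a genuine obstacle here, since the content is entirely packaged in Lemma~\ref{lem_basic_facts}; the only point worth double-checking is that parts~(1) and~(2) of that lemma really apply to an arbitrary nonzero element of the cyclic subspace $T[v]$, respectively $T[v']$, and not merely to the iterates $T^{i}v$ of the generator. This is exactly what the ``extend linearly'' steps in the proof of Lemma~\ref{lem_basic_facts} guarantee, so no further work is needed.
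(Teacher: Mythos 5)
Your argument is correct and is exactly the route the paper intends: this corollary is stated as an immediate consequence of Lemma~\ref{lem_basic_facts} (the paper omits its proof), and your contradiction via parts~(1) and~(2), together with the elementary observation that a nonzero vector cannot satisfy both $a=\ell$ and $\ell=a-1$, fills in that omitted argument precisely. No further work is needed.
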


\begin{corollary} Let a nilpotent pair $(f,g)$ be given.
Let $v\in V$ and $w\in W$ be balanced vectors. 
Then 
\[
T[v] \cap T[g(w)] 
    = 
    \{0 \} \subset V 
 \hspace{2mm}
 \mbox{ and }
 \hspace{2mm}
T'[f(v)] \cap T'[w] 
    = 
    \{ 0 \} \subset W,
\]
and the sums 
    $T[v] + T[g(w)]$ 
        in $V$ 
    and 
    $T'[f(v)]+ T'[w]$ 
        in $W$ 
are isomorphic to their respective direct sums. 
\end{corollary}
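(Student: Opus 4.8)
The plan is to reduce the two intersection statements to Lemma~\ref{lem_basic_facts}(3) together with the preceding corollary, and then to upgrade each trivial-intersection statement to an internal direct sum by a routine linear-algebra observation. First I would dispose of the degenerate cases: if $g(w)=0$ then $T[g(w)]=\{0\}$ and the first equality is immediate, and symmetrically if $f(v)=0$ then $T'[f(v)]=\{0\}$ and the second equality is immediate; the subcases $v=0$ or $w=0$ are subsumed, since then $T[v]=\{0\}$ or $T'[w]=\{0\}$ and $0$ counts as balanced. So from now on one may assume $g(w)\ne 0$ and $f(v)\ne 0$.

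For the main step, since $w$ is balanced and $g(w)\ne 0$, Lemma~\ref{lem_basic_facts}(3) shows that $g(w)\in V$ is unbalanced; likewise, since $v$ is balanced and $f(v)\ne 0$, the vector $f(v)\in W$ is unbalanced. I would then invoke the preceding corollary with the balanced vector $v$ and the unbalanced vector $g(w)$ to conclude $T[v]\cap T[g(w)]=\{0\}$, and apply its $W$-analogue (valid by the symmetry running through Lemma~\ref{lem_basic_facts} and its corollaries) with the balanced $w$ and the unbalanced $f(v)$ to get $T'[f(v)]\cap T'[w]=\{0\}$. If one prefers a self-contained argument not citing the preceding corollary, a nonzero $u\in T[v]\cap T[g(w)]$ would be balanced by Lemma~\ref{lem_basic_facts}(1) (as $v$ is balanced) and simultaneously unbalanced by Lemma~\ref{lem_basic_facts}(2) (as $g(w)$ is unbalanced), contradicting the fact that no nonzero vector can satisfy both $a=\ell$ and $\ell=a-1$.

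Finally, for the direct-sum assertion I would use the standard fact that any subspaces $U,U'$ of a vector space with $U\cap U'=\{0\}$ have addition map $U\oplus U'\to U+U'$, $(u,u')\mapsto u+u'$, an isomorphism: it is surjective by the definition of $U+U'$ and injective because its kernel is $\{(u,-u):u\in U\cap U'\}=\{0\}$. Applying this to the pairs $(T[v],T[g(w)])$ and $(T'[f(v)],T'[w])$ yields the claimed isomorphisms $T[v]+T[g(w)]\cong T[v]\oplus T[g(w)]$ and $T'[f(v)]+T'[w]\cong T'[f(v)]\oplus T'[w]$. I do not anticipate a real obstacle here; the only points requiring care are the degenerate cases and the appeal to the $W$-side of the preceding corollary, both of which are routine.
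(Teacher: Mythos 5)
Your proof is correct and follows what is evidently the intended route: the paper omits the proofs of these corollaries, and your argument --- $g(w)$ and $f(v)$ are unbalanced by Lemma~\ref{lem_basic_facts}(3), so a nonzero vector in either intersection would be simultaneously balanced by Lemma~\ref{lem_basic_facts}(1) and unbalanced by Lemma~\ref{lem_basic_facts}(2) (equivalently, one cites the preceding corollary and its $W$-analogue), with the degenerate cases $v=0$, $w=0$, $f(v)=0$, $g(w)=0$ handled separately --- is exactly the derivation the authors' lemmas set up. The closing observation that trivial intersection makes the sum an internal direct sum via the addition map is the standard fact and completes the claim.
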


\begin{theorem}
\label{thm_dim_nilpotent_pairs}
The number of nilpotent pairs   
    $(f,g)$ 
    is given by 
\begin{equation} 
\label{eqn_num_nil_pairs_elegant_expression}
\mathcal{N}_{m,n}=q^{2mn-m-n}(q^m + q^n - 1). 
\end{equation}
\end{theorem}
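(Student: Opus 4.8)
The plan is to evaluate the sum produced in Theorem~\ref{thm_nilpotent_pairs_fin_field} in a more transparent way. From the proof of that theorem, for every $f\in\Hom(V,W)$ of rank $r$ there are exactly $q^{mn-r}$ maps $g$ with $fg$ nilpotent; summing over all $f$ therefore gives
\[
\Nmn=\sum_{f\in\Hom(V,W)}q^{\,mn-\rank(f)}.
\]
Since $f\colon V\to W$ with $\dim V=m$, the rank--nullity theorem yields $\rank(f)=m-\dim\ker f$, so $q^{\,mn-\rank(f)}=q^{\,mn-m}\cdot q^{\dim\ker f}=q^{\,mn-m}\,|\ker f|$, and hence
\[
\Nmn=q^{\,mn-m}\sum_{f\in\Hom(V,W)}\#\{\,v\in V:\ f(v)=0\,\}.
\]

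Next I would swap the order of summation, rewriting the right-hand side as $q^{\,mn-m}\sum_{v\in V}\#\{\,f\in\Hom(V,W):\ f(v)=0\,\}$. For $v=0$ every $f$ qualifies, contributing $|\Hom(V,W)|=q^{mn}$. For a fixed nonzero $v$, the evaluation map $\Hom(V,W)\to W$, $f\mapsto f(v)$, is linear and surjective (extend $v$ to a basis of $V$ and prescribe $f(v)$ freely), so $\{\,f:f(v)=0\,\}$ is a subspace of $\Hom(V,W)$ of codimension $n$, of cardinality $q^{\,mn-n}$. As there are $q^m-1$ nonzero vectors in $V$,
\[
\sum_{v\in V}\#\{\,f\in\Hom(V,W):\ f(v)=0\,\}=q^{mn}+(q^m-1)\,q^{\,mn-n}.
\]

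Multiplying by $q^{\,mn-m}$ then gives
\[
\Nmn=q^{\,mn-m}\bigl(q^{mn}+(q^m-1)q^{\,mn-n}\bigr)=q^{\,2mn-m}+q^{\,2mn-n}-q^{\,2mn-m-n}=q^{\,2mn-m-n}\,(q^m+q^n-1),
\]
which is \eqref{eqn_num_nil_pairs_elegant_expression}. I do not expect a genuine obstacle here: the only steps needing a line of care are the rank--nullity bookkeeping and the surjectivity of the evaluation map (equivalently, that the $n$ scalar conditions $f(v)=0$ are independent), both standard. One could instead start from the explicit Gaussian-binomial sum in \eqref{eqn_nilpotent_pairs_Fq} and collapse it using the identity $\qbinom{m}{r}{q}\prod_{i=0}^{r-1}(q^r-q^i)=\prod_{i=0}^{r-1}(q^m-q^i)$, but the kernel-counting argument above is shorter and avoids manipulating $q$-binomials.
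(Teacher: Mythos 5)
Your argument is correct. Starting from Theorem~\ref{thm_nilpotent_pairs_fin_field} (equivalently the proposition stating $\mathcal{N}_{m,n}=\sum_{f\in\Hom(V,W)}q^{mn-\rank(f)}$), the rank--nullity rewriting $q^{mn-\rank(f)}=q^{mn-m}\,|\ker f|$, the exchange of summation to count pairs $(f,v)$ with $f(v)=0$, and the surjectivity of the evaluation map $f\mapsto f(v)$ for $v\neq 0$ are all sound, and the final arithmetic does give $q^{2mn-m-n}(q^m+q^n-1)$.

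This is, however, a genuinely different route from the one the paper intends. The paper does not prove the theorem here at all: it defers to \cite[Theorem 3.17]{CIKLR25}, and the surrounding discussion (the bijection $\Hom(V,W)\times\Hom(W,V)\times(V\cup_0 W)\cong\mathcal{N}(V,W)\times V\times W$, together with Theorem~\ref{thm_gen_nilpotent_two_vs}) makes clear that the intended proof is bijective, in the spirit of Leinster: one shows that nilpotent pairs equipped with a balanced vector are in bijection with all pairs of maps, and then accounts for balanced versus unbalanced vectors in $V$ and $W$ to extract the closed form. Your computation buys brevity and self-containedness: it needs only the counting result already proved in Section~\ref{subsection_nilp_pairs_bal_fin_field} plus elementary linear algebra over $\mathbb{F}_q$, and it sidesteps both the $q$-binomial manipulations and the balanced-vector machinery. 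What it does not give is the structural content that the paper emphasizes, namely an explicit bijection explaining \emph{why} $\mathcal{N}_{m,n}\,q^m q^n=q^{2mn}(q^m+q^n-1)$; your identity $\sum_f|\ker f|=q^{mn}+(q^m-1)q^{mn-n}$ is a double count rather than a bijection between the two sides. As a proof of the enumerative statement it is complete; it would be worth a sentence noting that it is independent of Theorem~\ref{thm_gen_nilpotent_two_vs}, which the paper's referenced argument is not.
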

See \cite[Theorem 3.17]{CIKLR25} for the proof. 
This expression is much simpler than the right hand side of \eqref{eqn_nilpotent_pairs_Fq}. 
It would be interesting to understand the relationship between 
the count in \eqref{eqn_num_nil_pairs_elegant_expression} 
and that in~\eqref{eqn_eventually_const_pairs}.

Theorem~\ref{thm_dim_nilpotent_pairs} 
gives us a bijection 
\[
\mathcal{N}(V,W)\cdot (v\in V)\cdot (w\in W) = \AllPairs(f,g) \cdot  (u \in V\cup W),
\]
where 
    $\AllPairs(f,g)$ 
denotes all pairs of linear maps,
which would descend to the formula 
\[
\mathcal{N}_{m,n}\, q^m \, q^n  =  q^{2mn} (q^m + q^n - 1).
\]
Equivalently, the theorem gives us a bijection
\[
\Hom(V,W) \times \Hom(W,V) 
    \times (V\cup_0 W) 
    \cong 
    \mathcal{N}(V,W) 
    \times V \times W,
\]
where $V\cup_0 W$ denotes the union of the vector spaces $V$ and $W$ along the $0$ vector.

\vspace{0.07in} 
The number $\Nmn$ of nilpotent pairs is given by \eqref{eqn_num_nil_pairs_elegant_expression}. 
We can multiply by $q^{m+n}$ 
and count nilpotent pairs 
$\NVW$ 
together with a vector $v$ in $V$ and $w$ in $W$. Then we see that the quadruples $(f,g,v,w)$ with nilpotent $(f,g)$ are in a ``bijection" with the data $(f',g')$, where $f',g'$ are any pair of maps plus a vector in $V$ or a vector in $W$ minus the 0 vector. Equivalently, for the latter we can take a vector in the union $V\cup W$ viewed as a subset of $V\oplus W$.

\vspace{0.07in} 

Let us separate $v$ into balanced $v_b$ and unbalanced $v_u$ and likewise for $w's$. 
We have, for nilpotent pairs  
\[
|(f,g,v)|=|(f,g,v_b)| + |(f,g,v_u)|=q^{2mn}+|(f,g,v_u)|,
\]
and likewise for $w$'s. 
We know that the count of nilpotent $(f,g,v_b,w)$ is $q^{2mn}q^n$ and the count of nilpotent 
$(f,g,v,w_b)$ is $q^{2mn}q^m$. 
Adding these two, we get 
\begin{equation}
\label{eq_bb}
|(f,g,v_b,w)| + |(f,g,v,w_b)| 
= q^{2mn}(q^m + q^n).
\end{equation}
The left hand side above equals 
\[
2 |(f,g,v_b,w_b)| + |(f,g,v_b,w_u)|+|(f,g,v_u,w_b)|.
\]
For nilpotent $(f,g)$, since we have the equality, 
\[
|(f,g,v,w)| = q^{2mn}(q^m + q^n - 1), 
\]
subtracting \eqref{eq_bb}, we have
\begin{equation}\label{eq_difference}
|(f,g,v_b,w_b)| = |(f,g,v_u,w_u)| + q^{2mn}.
\end{equation}
That is, the number of nilpotent pairs together with choices of balanced vectors $(v_b,w_b)$ equals the number of nilpotent pairs together with choices of unbalanced vectors $(v_u,w_u)$ plus the number of all pairs of maps $(f',g')$. 

We have now set up a bijection between sets of quadruples on the left hand side of \eqref{eq_difference} and the union of quadruples on the right hand side of \eqref{eq_difference} together with all pairs of maps $(f',g')$.

\begin{remark}
Let us try to understand balanced versus unbalanced vectors for a given nilpotent pair $(f,g)$. Let 
\[
d_1=\dim(\ker(f)), \quad
d_2 = \dim(\ker(fgf)), \hspace{2mm}
\ldots, \quad
d_k =\dim(\ker(f(gf)^{k-1}). 
\]
Then $d_1\le d_2\le \ldots \le d_k$ and $d_k = m = \dim V$ for large $k$. 
Likewise, let 
\[
d_0'=\dim(\ker((gf)^0))=0, \quad
d_1'=\dim(\ker(gf)), \hspace{2mm}
\ldots, \quad
d_k'=\dim(\ker((gf)^k). 
\]
Then $d_1'\le d_2'\le \ldots \le d_k'$ and $d_k' = m$ 
for large $k$. 
These sequences are nested, so that 
\[
0=d_0'\le d_1 \le d_1'\le d_2\le d_2'\leq \ldots \le d_k
\]
Let us use these sequences to count the number of balanced and unbalanced vectors in $V$.
The subspace $\dim(\ker((gf)^0))$ is $0$ and it contains the unique balanced vector $0$. Since $d_1=\dim (\ker f)$, the latter subspace contains $q^{d_1}-1$ unbalanced vectors and one balanced vector $0$. The subspace $\ker (gf)$ has dimension $d_1'$ and contains $q^{d_1'}-q^{d_1}+1 $ balanced vectors.
We get that the number of balanced vectors in $V$ is 
\[
|v_b| 
= 1 - q^{d_1}+q^{d_1'}-q^{d_2}+q^{d_2'}-\ldots 
= \sum_{i=0}^k q^{d_i'} - \sum_{i=1}^k q^{d_i}, 
\]
where $k$ is any number where the sequences $(d_i)$ and $(d_i')$ stabilize. 
% {\bf Say it more carefully. Then write down the number of unbalanced vectors. Also, we don't need this remark for the proof.}
\end{remark}

\begin{figure}
    \centering
\begin{tikzpicture}[scale=0.5,decoration={
    markings,
    mark=at position 0.5 with {\arrow{>}}}]

%%%%%%%%%%%%%%%%%%%%%%%%%%%%%%%%%%%%%%%%%%%%%%%%%%%%%

\begin{scope}[shift={(0,0)}]
%\draw[thin,yellow] (0,0) grid (5,6);

\draw[thick,shade, left color=WildStrawberry!35, right color=white] (-1.5,0) rectangle (5,6.5);

\begin{scope}[shift={(8,0)}]
\draw[thick,shade, left color=white, right color=WildStrawberry!35] (0,0) rectangle (6.5,6.5);
\end{scope}

\begin{scope}[shift={(0,-8)}]
\draw[thick,shade, left color=WildStrawberry!35, right color=white] (0.5,0) -- (-1.5,6.5) -- (5,6.5) -- (7,0) -- (0.5,0);
\end{scope}

\begin{scope}[shift={(10,-8)}]
\draw[thick,shade, left color=white, right color=WildStrawberry!35] (0,0) -- (-2,6.5) -- (4.5,6.5) -- (6.5,0) -- (0,0);

\end{scope}

\draw[thick] (-1.5,0) rectangle (5,6.5);

\node at (-1.5,-0.7) {$V$};

\node at (-4.75,3.5) {Every vector};

\node at (-4.75,2.5) {in $T[v_b]$};

\node at (-4.75,1.5) {is balanced};

\draw[thick,fill] (3.7,5.5) arc (0:360:2mm);

\draw[thick] (3.1,5.5) arc (0:360:0.6);

\draw[thick] (3,6.25) .. controls (6,9) and (13.5,9) .. (16.5,6.25);

\draw[thick,->] (16.5,6.25) .. controls (19,4) and (21,-5) .. (12,-6.8);

\node at (19.25,2) {$f'$};

\node at (2.5,5.5) {$v_b$};

\draw[thick,fill] (3.7,4) arc (0:360:2mm);

\node at (2.25,4) {$T v_b$};

\node at (3.5,2.85) {$\vdots$};

\draw[thick,fill] (3.7,1) arc (0:360:2mm);

\node at (1.75,1) {$T^{a-1}v_b$};

\node at (6.1,6) {$f$};

\draw[thick,->] (4,5.5) -- (9,5.5);

\node at (6.1,5) {$g$};

\draw[thick,<-] (4,4.3) -- (9,5);

% \node at (7.5,4.5) {$f$};

\draw[thick,->] (4,4) -- (9,4);

\draw[thick,<-] (4,2.8) -- (9,3.5);

% \node at (6.0,3) {$f$};

\draw[thick,->] (4,2.5) -- (9,2.5);

% \node at (7.5,1.90) {$g$};

\draw[thick,<-] (4,1.3) -- (9,2);

\draw[thick,->] (4,1) -- (9,1);

\end{scope}

%%%%%%%%%%%%%%%%%%%%%%%%%%%%%%%%%%%%%%%%%%%%%%%%%%%%%

\begin{scope}[shift={(8,0)}]
% \draw[thin, yellow] (0,0) grid (5,5);
% \draw[thin, green] (0,5) grid (5,6);

\draw[thick] (0,0) rectangle (6.5,6.5);

\node at (6.5,-0.7) {$W$};

\node at (12.25,8.25) {Every vector};

\node at (12.25,7.25) {in $T'[fv_b]\setminus\{0\}$};

\node at (12.25,6.25) {is unbalanced};

\draw[thick,fill] (1.7,5.5) arc (0:360:2mm);

\node at (2.75,5.5) {$f v_b$};

\draw[thick,fill] (1.7,4) arc (0:360:2mm);

\node at (3.15,4) {$T' f v_b$};

\node at (1.5,2.5) {$\vdots$};

\draw[thick,fill] (1.7,1) arc (0:360:2mm);

\node at (3.95,1) {$(T')^{a-1} fv_b$};

\end{scope}

%%%%%%%%%%%%%%%%%%%%%%%%%%%%%%%%%%%%%%%%%%%%%%%%%%%%%

\begin{scope}[shift={(0,-8)}]

% \draw[thin,yellow] (0,0) grid (5,6);

\draw[thick] (-1.5,6.5) -- (5,6.5);

\draw[thick] (-1.5,6.5) -- (0.5,0);

\draw[thick] (5,6.5) -- (7,0);

\draw[thick] (0.5,0) -- (7,0);

\node at (-4.5,4.25) {Every vector};

\node at (-4.5,3.25) {in $T[g w_b]\setminus\{0\}$};

\node at (-4.5,2.25) {is unbalanced};

\draw[thick,fill] (2.7,5.5) arc (0:360:2mm);

\node at (0.5,5.5) {$T^{\ell-1}g w_b$};

\node at (3.25,4.35) {\rotatebox[origin=c]{20}{$\vdots$}};

\node at (2.5,2.5) {$Tg w_b$};

\draw[thick,fill] (4.2,2.5) arc (0:360:2mm);

\draw[thick,fill] (4.9,1) arc (0:360:2mm);

\node at (3.45,1) {$g w_b$};

\draw[thick,<-] (3.2,5.5) -- (9,5.5);

\draw[thick,->] (3.9,4.4) -- (9,5);

\draw[thick,<-] (4,4) -- (9.5,3.9);

\draw[thick,->] (4.6,2.85) -- (9.75,3.5);

\draw[thick,<-] (4.6,2.5) -- (10.25,2.5);

\draw[thick,->] (5.25,1.3) -- (10.25,2.1);

\node at (5.75,1.9) {$f$};

\draw[thick,<-] (5.25,0.9) -- (10.75,0.9);

\node at (8.00,0.50) {$g$};

\end{scope}

%%%%%%%%%%%%%%%%%%%%%%%%%%%%%%%%%%%%%%%%%%%%%%%%%%%%%

\begin{scope}[shift={(8,-8)}]
% \draw[thin, yellow] (0,0) grid (5,5);
% \draw[thin, green] (0,5) grid (5,6);

\begin{scope}[shift={(2,0)}]
\draw[thick] (-2,6.5) -- (4.5,6.5);

\draw[thick] (-2,6.5) -- (0,0);

\draw[thick] (4.5,6.5) -- (6.5,0);

\draw[thick] (0,0) -- (6.5,0);
\end{scope}

\node at (12,4.00) {Every vector};

\node at (12,3.00) {in $T'[w_b]$};

\node at (12,2.00) {is balanced};

\draw[thick,fill] (1.8,5.5) arc (0:360:2mm);

\node at (3.85,5.5) {$(T')^{\ell-1} w_b$};

\node at (2.1,4) {\rotatebox[origin=c]{20}{$\vdots$}};

\draw[thick,fill] (3.0,2.5) arc (0:360:2mm);

\node at (4.25,2.5) {$T' w_b$};

\draw[thick,fill] (3.7,1) arc (0:360:2mm);

\node at (4.75,0.8) {$w_b$};

\end{scope}

%%%%%%%%%%%%%%%%%%%%%%%%%%%%%%%%%%%%%%%%%%%%%%%%%%%%%

\end{tikzpicture}
    \caption{Top figure: the map $f$ is an isomorphism. Bottom figure: the map $g$ is an isomorphism. The map $f$ is redefined to $f'$ by changing it on $v_b$ to $f'(v_b)=w_b$. Then using the pair $(f',g)$, every vector in $T'[w_b]$, except $0$, becomes unbalanced.  
    So one has the decompositions 
    $V = T[v_b]\oplus T[gw_b] \oplus \widetilde{V}$ 
    and 
    $W = T'[w_b] \oplus T'[fv_b]\oplus \widetilde{W}$ for some vector spaces $\widetilde{V}$ and $\widetilde{W}$. 
    }
    \label{fig_20001}
\end{figure}
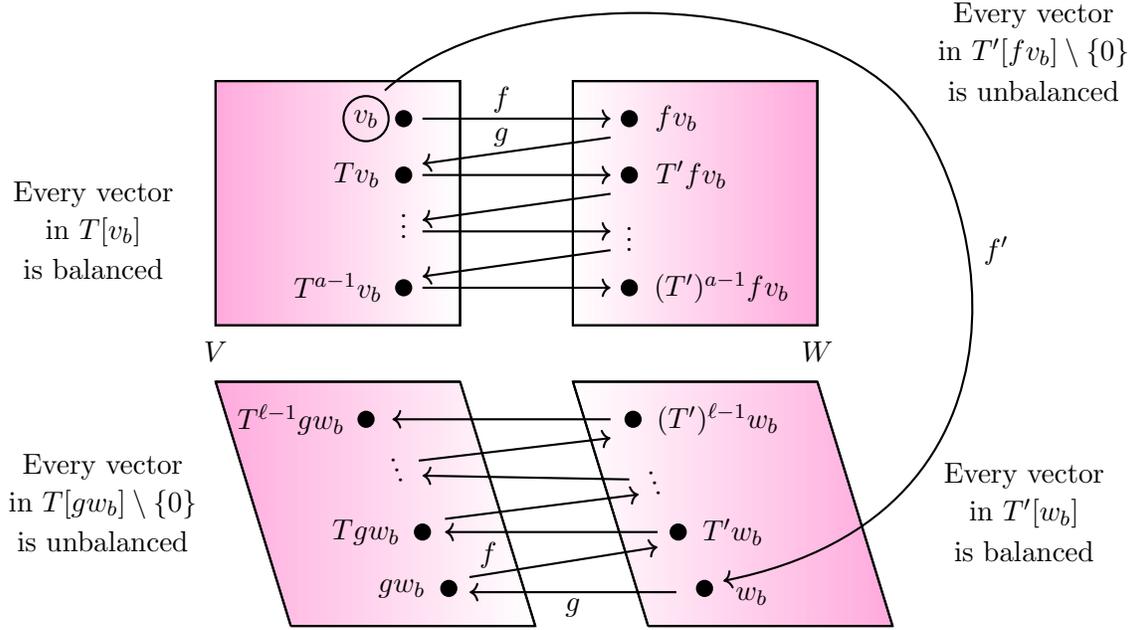

\begin{remark} The difference 
\[
|(v_b,w_b)|- |(v_u,w_u)|  
\]
depends on the isomorphism type of the pair $(f,g)$. We can see this for small dimensions $(m,n)$. Take $m = n =1.$ 
If $(f,g)=(0,0)$, then there is one balanced vector in $V$, so that $|v_b|=1$,  and one balanced vector in $W$, with $|w_b|=1$. The other vectors are unbalanced, 
$|v_u|=q^m-1$, 
$|w_u|=q^n-1$. 
If $(f,g)=(1,0)$, 
every vector in $V$ is balanced, $|v_b|=q^m$ and $|v_u|=0$.  
\end{remark}

\begin{remark}
It would be interesting to investigate if there a relation between the numbers of balanced and unbalanced vectors in nilpotent pairs $(f,g)$, $(f',g')$ and in their direct sum $(f\oplus f',g\oplus g')$ since we see that the dimensions of various kernels above add correctly.
\end{remark}

\begin{example}
\label{ex_small_cases_count_nilp_pairs}
Let $m=0$. Then \eqref{eqn_num_nil_pairs_elegant_expression} reduces to 1.
Let $m=1$. Then \eqref{eqn_num_nil_pairs_elegant_expression} reduces to 
$q^n + q^{n-1} (q^n - 1)
= q^{n-1} (q^n + q -1)$.
For $m=2$, the summation reduces to 
\[ 
q^{2n} 
+ q^{2n-1} (q+1)(q^n-1)
+ q^{2n-2} (q^n-1)(q^n-q) 
= q^{3n-2}(q^n+q^2-1).
\] 
For $m=3$, the summation simplifies as 
\begin{align*}
q^{3n} 
&+ q^{3n-1} \frac{(q^3-1)(q^n-1)}{q-1} 
 + q^{3n-2} \frac{(q^3-1)(q^3-q) (q^n-1)(q^n-q)}{(q^2-1)(q^2-q)}  \\ 
&+  q^{3n-3} (q^n-1)(q^n-q)(q^n-q^2) 
= 
q^{5n-3}(q^n+q^3-1). 
\end{align*} 
\end{example}

\begin{corollary}
\label{cor_prob_nil_pairs}
The probability that $(f,g)$ is nilpotent is 
\begin{equation}
\label{eqn_probability_nilpot_pair}
\dfrac{q^m + q^n - 1}{q^{m+n}} 
= q^{-m} + q^{-n} - q^{-m-n}.
\end{equation}
\end{corollary}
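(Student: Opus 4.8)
The plan is to divide the count of nilpotent pairs, which is already supplied by Theorem~\ref{thm_dim_nilpotent_pairs}, by the total number of pairs $(f,g)$. First I would record that a linear map $f\in\Hom(V,W)$ is determined by an $n\times m$ matrix over $\mathbb{F}_q$, so $|\Hom(V,W)| = q^{mn}$, and likewise $|\Hom(W,V)| = q^{mn}$; hence the number of all pairs $(f,g)$ with $f:V\to W$ and $g:W\to V$ equals $|\Hom(V,W)\times\Hom(W,V)| = q^{2mn}$.

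Next I would invoke Theorem~\ref{thm_dim_nilpotent_pairs}, which gives $\mathcal{N}_{m,n} = q^{2mn-m-n}(q^m+q^n-1)$ for the number of pairs $(f,g)$ such that $gf$ (equivalently $fg$) is nilpotent. The probability in question is then the ratio
\[
\frac{\mathcal{N}_{m,n}}{q^{2mn}} = \frac{q^{2mn-m-n}(q^m+q^n-1)}{q^{2mn}} = \frac{q^m+q^n-1}{q^{m+n}},
\]
and expanding the fraction term by term yields $q^{-m} + q^{-n} - q^{-m-n}$, which is exactly~\eqref{eqn_probability_nilpot_pair}.

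There is no real obstacle here: all the substantive content lives in Theorem~\ref{thm_dim_nilpotent_pairs} (and ultimately in Theorem~\ref{thm_gen_nilpotent_two_vs} and the finite-field count of Theorem~\ref{thm_nilpotent_pairs_fin_field}), so the corollary is a one-line arithmetic consequence. The only point worth a sentence of care is confirming the size of the ambient sample space, namely that the pairs $(f,g)$ range over the product $\Hom(V,W)\times\Hom(W,V)$ with the uniform distribution, so that the denominator is indeed $q^{mn}\cdot q^{mn}$ rather than, say, a single hom-space.
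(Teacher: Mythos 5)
Your proposal is correct and matches the paper's own argument: the paper likewise divides the count $\mathcal{N}_{m,n}=q^{2mn-m-n}(q^m+q^n-1)$ from Theorem~\ref{thm_dim_nilpotent_pairs} by $|\Hom(V,W)\times\Hom(W,V)|=q^{2mn}$ and simplifies. Your extra remark about the sample space being the full product of hom-spaces is the same observation the paper makes, just spelled out a bit more explicitly.
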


\begin{proof}
This follows immediately by~\eqref{eqn_num_nil_pairs_elegant_expression} and since the cardinality of $\Hom(V,W) \times \Hom(W,V)$ is $q^{2mn}$.
\end{proof}

\begin{example}
\label{ex_probability_small_cases}
Let $m=0$. Then \eqref{eqn_probability_nilpot_pair} reduces to 1.
Let $m=1$. Then \eqref{eqn_probability_nilpot_pair} reduces to 
$\frac{1}{q^n}+ \frac{q^n - 1}{q^{n+1}}=\frac{q^n + q -1}{q^{n+1}}$.
For $m=2$, the summation reduces to 
\[ 
\frac{1}{q^{2n}} +  \frac{(q^2-1)(q^n-1)}{(q-1) q^{2n+1}} + \frac{(q^n-1)(q^n-q)}{q^{2n+2}} 
= \frac{q^n+q^2-1}{q^{n+2}}.
\] 
For $m=3$, the summation simplifies as 
\begin{align*}
\frac{1}{q^{3n}} +  \frac{(q^3-1)(q^n-1)}{(q-1) q^{3n+1}} 
&+ \frac{(q^3-1)(q^3-q) (q^n-1)(q^n-q)}{(q^2-1)(q^2-q) q^{3n+2}}  
+  \frac{(q^n-1)(q^n-q)(q^n-q^2)}{q^{3n+3}} \\
&= 
\frac{q^n+q^3-1}{q^{n+3}}. 
\end{align*} 
\end{example}

\begin{remark}
We can combine Theorem~\ref{thm_gen_nilpotent_two_vs} and Theorem~\ref{thm_dim_nilpotent_pairs} to compute the probability $\mathsf{P}$ that, for a random nilpotent pair $(V,W)$, a randomly chosen $v\in V$ is balanced:
\[
\mathsf{P}(v \ \mathrm{is\ balanced})
    = \frac{|\mathcal{N}(V,W,v_b)|}{|\mathcal{N}(V,W)| |V|} 
    = \frac{q^{2mn}}{q^{2mn - n}(q^m + q^n - 1)}
    = \frac{q^n}{q^m + q^n - 1}.
\]   
\end{remark}

\subsection{Probability in the limit}
\label{subsection_limiting_case}
We continue to work over $\mathbb{F}_q$.
In the following theorem, 
we investigate the limiting quantity of the probability of a nilpotent pair as 
$\dim W\to \infty$ while $\dim V$ remains fixed.

\begin{theorem}
\label{thm_m_fixed_n_large}
For $m$ fixed and in the limit of large $n$, the probability of a nilpotent pair goes to $q^{-m}$.
\end{theorem}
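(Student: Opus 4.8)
The plan is to read off the answer directly from the closed form already established. By Corollary~\ref{cor_prob_nil_pairs}, for vector spaces $V,W$ over $\mathbb{F}_q$ of dimensions $m$ and $n$, the probability that a pair $(f,g)\in\Hom(V,W)\times\Hom(W,V)$ is nilpotent equals
\[
\frac{q^m+q^n-1}{q^{m+n}} \;=\; q^{-m}+q^{-n}-q^{-m-n}.
\]
So the first (and essentially only) step is to invoke this identity, treating $m$ as a fixed constant and regarding the expression as a function of $n$.

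Next I would take the limit termwise. Since $q$ is a prime power, $q\ge 2$, so $q^{-n}\to 0$ and $q^{-m-n}\to 0$ as $n\to\infty$, while $q^{-m}$ is constant in $n$. Hence
\[
\lim_{n\to\infty}\left(q^{-m}+q^{-n}-q^{-m-n}\right)=q^{-m},
\]
which is the claimed limiting probability. One may also phrase the rate of convergence: the difference between the probability and its limit is $q^{-n}-q^{-m-n}=q^{-n}(1-q^{-m})$, which is $O(q^{-n})$, so the convergence is geometric.

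There is no real obstacle here: all of the combinatorial content has already been absorbed into Theorem~\ref{thm_dim_nilpotent_pairs} and its corollary, and the theorem is simply the asymptotic shadow of that exact formula. If one wishes, it is worth remarking on the conceptual meaning: in the regime where $\dim W$ is huge compared to $\dim V$, the constraint of nilpotency is asymptotically dominated by the behavior on the smaller space $V$, and $q^{-m}=|V|^{-1}$ is exactly the Fine--Herstein/Leinster probability that a single random endomorphism of an $m$-dimensional space over $\mathbb{F}_q$ is nilpotent; thus the limit recovers the classical one-space count, which is a satisfying consistency check rather than an additional difficulty.
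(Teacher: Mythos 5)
Your proof is correct, and it is in fact the route the paper itself flags in the remark immediately following its proof: take the closed-form probability $(q^m+q^n-1)/q^{m+n}=q^{-m}+q^{-n}-q^{-m-n}$ from Corollary~\ref{cor_prob_nil_pairs} and let $n\to\infty$ with $m$ fixed. The paper's own proof, however, does not use the closed form: it starts from the summation formula of Theorem~\ref{thm_nilpotent_pairs_fin_field} (proved internally via the rank count of Lemma~\ref{lemma_number_rank_r_linear_maps}), expands the $q$-binomials, and compares leading exponents, observing that the term of rank $r$ contributes $q^{-(m-r)(n-r)-r}$, which for $0\le r\le m\ll n$ is maximized at $r=m$, giving the limit $q^{-m}$. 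The trade-off is this: your argument is shorter, gives the explicit geometric error $q^{-n}(1-q^{-m})$, and makes the conceptual link to the Fine--Herstein/Leinster probability transparent; but it leans on Theorem~\ref{thm_dim_nilpotent_pairs}, whose proof this paper defers to the citation \cite{CIKLR25}, so within this paper it is not self-contained. The paper's asymptotic analysis is self-contained given its own Theorem~\ref{thm_nilpotent_pairs_fin_field}, and it additionally reveals which rank stratum dominates in the limit (the full-rank maps $f$ of rank $m$), information invisible in the closed-form computation. Both are valid proofs of the statement.
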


This is the same as the probability computed by Leinster~\cite{Lei21}. The idea is that, as $\dim W$ goes to infinity, probably that a pair of operators $(f,g)$ is nilpotent approaches Leinster's probability that an operator on $V$ is nilpotent.

We will now prove Theorem~\ref{thm_m_fixed_n_large}.

\begin{proof}
Let $m$ be fixed and assume $n\gg m \geq 0$. Then \eqref{eqn_probability_nilpot_pair} simplifies as 
\begin{equation}
\label{eqn_fixed_m}
	\left(\sum_{r=0}^{m}  \qbinom{m}{r}{q} \qbinom{n}{r}{q} q^{-r} 
    \prod_{i=0}^{r-1}(q^r - q^i)  \right)\Big/ q^{mn}.
\end{equation} 
Expanding the above expression, we have 
\begin{align*}
\sum_{r=0}^{m}\frac{\displaystyle{\prod_{i=0}^{r-1}} (q^m-q^i) \displaystyle{\prod_{i=0}^{r-1}}(q^n-q^i) }{\displaystyle{\prod_{i=0}^{r-1}} (q^r-q^i)} \frac{1}{q^{mn+r}}. 
\end{align*}
The leading term for a given $r$ is 
\[
q^{mr+nr-r^2}q^{-mn-r}=q^{r(m+n)-mn-r(r+1)}=q^{-(m-r)(n-r)-r}
\]
which, when $0\le r \le m \ll n$ takes the largest value $q^{-m}$ when $r=m$. A simple manipulation completes the proof of the theorem.  
\end{proof}

\begin{remark}
Alternatively, as a proof of Theorem~\ref{thm_m_fixed_n_large}, we could use the elegant expression~\eqref{eqn_probability_nilpot_pair}  
to see that 
$\lim_{n\rightarrow \infty} (q^{-m}+ q^{-n}-q^{-m-n}) = q^{-m}$.  
\end{remark}

In Proposition~\ref{prop_m_equals_n_large}, we investigate the limiting quantity for \eqref{eqn_probability_nilpot_pair} for $m = n\gg 0$.
\begin{proposition}
\label{prop_m_equals_n_large}
Let $m=n$. Then 
\[
\lim_{n\rightarrow \infty} 
	\left(\sum_{r=0}^{n}  \left(\qbinom{n}{r}{q}\right)^2 
    q^{-r} 
    \prod_{i=0}^{r-1}(q^r - q^i)  \right)\Big/ q^{n^2} = 0.
\]
\end{proposition}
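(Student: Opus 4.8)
The plan is to identify the expression under the limit with a probability that has already been evaluated in closed form elsewhere in the paper. First I would observe that dividing the formula for $\mathcal{N}_{m,n}$ in Theorem~\ref{thm_nilpotent_pairs_fin_field} by $q^{2mn}=|\Hom(V,W)\times\Hom(W,V)|$ and then setting $m=n$ yields exactly
\[
\frac{\mathcal{N}_{n,n}}{q^{2n^{2}}}
=\frac{1}{q^{n^{2}}}\sum_{r=0}^{n}\left(\qbinom{n}{r}{q}\right)^{2}q^{-r}\prod_{i=0}^{r-1}(q^{r}-q^{i}),
\]
so the quantity whose limit we must compute is precisely the probability that a random pair $(f,g)$ with $\dim V=\dim W=n$ is nilpotent.

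Next I would apply Corollary~\ref{cor_prob_nil_pairs} (equivalently Theorem~\ref{thm_dim_nilpotent_pairs}) with $m=n$, which says this probability equals
\[
\frac{q^{n}+q^{n}-1}{q^{2n}}=2q^{-n}-q^{-2n}.
\]
Since $q\ge 2$, both $q^{-n}$ and $q^{-2n}$ tend to $0$ as $n\to\infty$, and the proposition follows. Because the whole statement is then a one-line consequence of the already-established elegant formula, there is essentially no obstacle; the only thing to check carefully is matching the normalization (the factor $q^{-mn}$ in the proposition versus the $q^{-2mn}$ that turns $\mathcal{N}_{m,n}$ into a probability) when passing between Theorem~\ref{thm_nilpotent_pairs_fin_field} and the form displayed in the proposition.

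If instead one wanted a self-contained argument bypassing the closed formula, I would bound the sum term by term using the leading-exponent computation from the proof of Theorem~\ref{thm_m_fixed_n_large}: with $m=n$ the $r$-th summand of $q^{-n^{2}}\sum_{r}(\cdots)$ has order $q^{-(n-r)^{2}-r}$ up to a factor bounded uniformly in $n$ (coming from the convergent product $\prod_{j\ge 1}(1-q^{-j})^{-1}$ that controls the $q$-binomials). Writing $s=n-r$, this is $q^{-n}q^{-s(s-1)}$, so summing over all $r$ gives at most $q^{-n}\sum_{s\ge 0}q^{-s(s-1)}=O(q^{-n})\to 0$. The mild obstacle along this route is making the ``bounded factor'' claim precise, and invoking Corollary~\ref{cor_prob_nil_pairs} is exactly what avoids it; for that reason I would present the closed-formula argument as the proof and relegate the direct estimate to a remark.
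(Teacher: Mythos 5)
Your proposal is correct, but it takes a different route from the paper's own proof. The paper proves Proposition~\ref{prop_m_equals_n_large} directly from the sum formula~\eqref{eqn_nilpotent_pairs_Fq}: for each $r$ it compares leading exponents, getting $q^{2nr}$ in the numerator against $q^{n^2+r^2+r}$ in the denominator, and uses $n^2+r^2+r-2nr=(n-r)^2+r\ge 0$ to conclude each summand tends to $0$. Your primary argument instead recognizes the displayed quantity as $\mathcal{N}_{n,n}/q^{2n^2}$, i.e.\ the probability that a pair is nilpotent when $m=n$, and invokes Corollary~\ref{cor_prob_nil_pairs} (equivalently Theorem~\ref{thm_dim_nilpotent_pairs}) to get $2q^{-n}-q^{-2n}\to 0$; this is exactly the alternative the paper records in the remark following the proposition, so it is legitimate, with the caveat that it rests on the closed formula~\eqref{eqn_num_nil_pairs_elegant_expression}, whose proof the paper delegates to \cite{CIKLR25}, whereas the paper's direct estimate keeps the proposition self-contained given Theorem~\ref{thm_nilpotent_pairs_fin_field}. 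Your fallback estimate is in fact slightly more careful than the paper's argument: the paper only observes that each individual term vanishes as $n\to\infty$, while the number of terms grows with $n$, so strictly one needs a uniform bound and a summation as you do (bounding the $q$-binomial factors by the convergent product and summing $q^{-(n-r)^2-r}$ over $r$ to get $O(q^{-n})$); making the ``bounded factor'' step explicit is the only detail you would still have to write out if you chose that route.
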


\begin{proof}
Let $m=n$. 
Then \eqref{eqn_fixed_m} simplifies as 
\begin{align*}
\sum_{r=0}^{n}
\frac{\displaystyle{\prod_{i=0}^{r-1}} (q^n - q^i)^2 }{q^{n^2 + r} \displaystyle{\prod_{i=0}^{r-1}} (q^r-q^i)}.
\end{align*}
Let us consider one term in the summation at a time. 
The highest degree term in the numerator of the rational function is $q^{2nr}$. All other terms in the numerator are of lower degree with respect to $q$. 
The maximal degree term in the denominator is  $q^{n^2+r^2+r}$. We want to show $n^2+r^2+r-2nr\geq 0$. This would show that exponent of the leading term in the denominator is greater than the exponent of the leading term in the numerator; thus each term in the summation approaches zero as $n\rightarrow \infty$. 
We have $n^2+r^2+r-2nr = (n-r)^2 + r\geq 0$ since $0\leq r\leq n$. Thus this completes the proof.
\end{proof}

\begin{remark}
Alternatively, we can use 
\eqref{eqn_probability_nilpot_pair} to see that 
\[
\lim_{n\rightarrow \infty} 
    (2 q^{-n} - q^{-2n}) = 0
\]
to obtain Proposition~\ref{prop_m_equals_n_large}.
\end{remark}

\subsection{Nilpotent pairs and balanced vectors over a finite field}
\label{subsection_nilp_pairs_bal_vec_char_q}

Let $k=\mathbb{F}_q$. We resume using the notations from Section~\ref{subsection_nilp_pairs_bal_vec}. For a fixed $m$ and $n$, we want to enumerate the set of all nilpotent pairs $\NVW$ together with length $\ell$ balanced vectors, i.e., balanced vectors $v$ such that $\dim T[v]=\ell$. Let us denote the set of nilpotent pairs together with length $\ell$ balanced vectors as $\mathcal{N}(m,n;\ell)$. We write $|\mathcal{N}(m,n;\ell)|$ to denote the cardinality of the triple $\mathcal{N}(m,n;\ell)$. 
Write $\NVW$ to be the set of all nilpotent pairs between $V$ and $W$, where $\dim V = m$ and $\dim W = n$.

\begin{theorem}
\label{thm_cardinality_nilp_triple}
The cardinality of the set $\mathcal{N}(m,n;\ell)$ of nilpotent pairs with a length $\ell$ balanced vector in $V$ is
\[
\begin{split}
|\mathcal{N}(m,n;\ell)| &= \prod_{i=0}^{\ell-1} (q^m - q^i) \prod_{j=0}^{\ell-1} (q^n - q^j) q^{\ell(m-\ell)}q^{\ell(n-\ell)} \cdot \\ 
&\hspace{2mm} 
\cdot 
q^{2(m-\ell)(n-\ell)-(m-\ell)-(n-\ell)}(q^{m-\ell}+q^{n-\ell}-1).
\end{split}
\]
\end{theorem}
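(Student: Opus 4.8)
The plan is to count the triples $(f,g,v)$ where $(f,g)\in\mathcal{N}(V,W)$ and $v$ is a balanced vector of length exactly $\ell$, by choosing the subspaces $T[v]$ and $T'[fv]$ together with ordered bases on them first, and then counting the compatible pairs of linear maps on a complementary pair of subspaces. The key structural input is the block description in the proof of Theorem~\ref{thm_gen_nilpotent_two_vs}: once a balanced $v$ of length $\ell$ is fixed, the pair $(f,g)$ restricted to $T[v]$ and $T'[fv]$ is rigid (an identity block and a Jordan block $\J_\ell$ in the distinguished bases), and the remaining freedom is exactly a nilpotent pair $(f|_{T[v]^\perp},g|_{T'[fv]^\perp})$ on an $(m-\ell)$-dimensional and an $(n-\ell)$-dimensional space, together with the off-diagonal blocks $A$ (size $\ell\times(m-\ell)$) and $B$ (size $\ell\times(n-\ell)$) and the choices of complements $T[v]^\perp\subset V$, $T'[fv]^\perp\subset W$.

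First I would count the data of a length-$\ell$ balanced vector itself: specifying $v, Tv,\dots,T^{\ell-1}v$ amounts to specifying an ordered linearly independent $\ell$-tuple in $V$ spanning $T[v]$ (since $T$ acts as the shift on this basis), so $T[v]$ with its ordered basis is counted by $\prod_{i=0}^{\ell-1}(q^m-q^i)$; likewise the image chain $fv,T'fv,\dots,(T')^{\ell-1}fv$ in $W$ carries $\prod_{j=0}^{\ell-1}(q^n-q^j)$ choices. I should be careful here that once $v$ and its $T$-orbit are fixed, the chain in $W$ is $f(v),\dots$, which is determined by $f$, not chosen independently; the right way to organize this is Leinster's: the $A$, $B$ blocks are free, and the data of the two ordered bases is equivalent (by Corollary~\ref{corollary_on_order_bases}) to a pair of isomorphisms $T[v]\to T'[fv]$ and back, which is where one of the $\prod(q^n-q^j)$ factors comes from. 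Next I would account for the complement choices: by Lemma~\ref{lemma_canon_complement} a choice of complement of $T[v]$ in $V$ is a $\Hom(T[v]^\perp,T[v])$-torsor of size $q^{\ell(m-\ell)}$, and similarly $q^{\ell(n-\ell)}$ for $W$; these match the two middle factors $q^{\ell(m-\ell)}q^{\ell(n-\ell)}$. Finally, the residual nilpotent pair on spaces of dimensions $m-\ell$ and $n-\ell$ is counted by Theorem~\ref{thm_dim_nilpotent_pairs}, giving exactly $q^{2(m-\ell)(n-\ell)-(m-\ell)-(n-\ell)}(q^{m-\ell}+q^{n-\ell}-1)$.

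Multiplying these independent contributions yields the claimed formula. The main obstacle I expect is bookkeeping the overlap between "choosing ordered bases/isomorphisms on $T[v]$, $T'[fv]$" and "choosing the off-diagonal blocks $A$, $B$ and the complements": these must be shown to be genuinely independent coordinates on the set $\mathcal{N}(m,n;\ell)$ with no overcounting, which is precisely the content of the bijection in Theorem~\ref{thm_gen_nilpotent_two_vs} specialized to fixed length $\ell$. Concretely, I would verify that the map sending a triple $(f,g,v)$ to the tuple (ordered basis of $T[v]$, ordered basis of $T'[fv]$, complement of $T[v]$, complement of $T'[fv]$, residual nilpotent pair) is a bijection onto the product of those sets, using Lemma~\ref{lemma_characterization_of_pairs} and Lemma~\ref{lemma_bijection_isom} exactly as in the proof of Theorem~\ref{thm_gen_nilpotent_two_vs}, with the only change being that $\ell$ is held fixed rather than summed over. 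Once that bijection is in place, the cardinality computation is the product of the four displayed factors, and the proof is complete.
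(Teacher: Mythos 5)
Your overall skeleton matches the paper's count -- (number of chains in $V$) $\times$ (number of chains in $W$) $\times\, q^{\ell(m-\ell)}q^{\ell(n-\ell)} \times \mathcal{N}_{m-\ell,n-\ell}$, with Theorem~\ref{thm_dim_nilpotent_pairs} supplying the last factor -- but your justification of the two middle factors has a genuine gap. You attribute $q^{\ell(m-\ell)}q^{\ell(n-\ell)}$ to ``choices of complements'' of $T[v]$ and $T'[fv]$, and you propose to verify that the map $(f,g,v)\mapsto(\text{ordered bases},\ \text{complements},\ \text{residual nilpotent pair})$ is a bijection. That map is not well defined: a triple $(f,g,v)$ determines the two chains, but it determines neither a canonical complement of $T[v]$ in $V$ (nor of $T'[fv]$ in $W$), nor a residual pair living on such complements. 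The natural complement-based parametrization (choose the chains, choose complements $C\subset V$ and $D\subset W$, choose a nilpotent pair between $C$ and $D$, and let $f,g$ be block diagonal) is neither surjective nor injective onto $\mathcal{N}(m,n;\ell)$: for $V=W=\mathbb{F}_q^2$ with $f=\mathrm{id}$, $g$ the nilpotent Jordan block and $v=g(e_1)$, the vector $v$ is balanced of length $1$ but no pair of complements is preserved by $(f,g)$, so this triple is missed; while for $f=\mathrm{id}$, $g=0$ and any $v\neq 0$, every complement of $\langle v\rangle$ works, so that triple is hit $q$ times. The numerical agreement of the two parametrizations is exactly the nontrivial content of a Leinster-type bijection, so it cannot simply be asserted in the middle of the count.

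The fix is short, and it is what the paper does: after fixing the chains $v,Tv,\dots,T^{\ell-1}v$ in $V$ and $fv,T'fv,\dots,(T')^{\ell-1}fv$ in $W$ (giving the factors $\prod_{i=0}^{\ell-1}(q^m-q^i)\prod_{j=0}^{\ell-1}(q^n-q^j)$), write $f$ and $g$ with respect to one fixed, arbitrary pair of complements. The values on the chains are forced (identity block and shift block), the lower-left blocks vanish automatically because the chains map into each other, and the upper-right blocks $B$ of size $\ell\times(m-\ell)$ and $C$ of size $\ell\times(n-\ell)$ are completely unconstrained, since $gf$ and $fg$ are then block upper triangular and nilpotency of $(f,g)$ is equivalent to nilpotency of the residual pair $(f',g')$. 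This yields the factors $q^{\ell(m-\ell)}q^{\ell(n-\ell)}\,\mathcal{N}_{m-\ell,n-\ell}$ directly, with no appeal to Theorem~\ref{thm_gen_nilpotent_two_vs} or Lemma~\ref{lemma_canon_complement}. Alternatively, one can genuinely route through a fixed-$\ell$ refinement of Theorem~\ref{thm_gen_nilpotent_two_vs}; but then the subspaces, complements and isomorphisms arise on the ``all pairs'' side via the Fitting-type Lemma~\ref{lemma_characterization_of_pairs}, and that side must be counted separately -- as written, your argument conflates the two sides of that bijection.
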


\begin{proof}
There are $q^m-1$ choices for $v$ (since we need to throw out the zero vector), $q^m-q$ choices for $Tv$, and in general, $q^m- q^i$ choices for $T^i v$, where $0\leq i\leq \ell -1$. Similarly, there are $q^n - q^j$ choices for $(T')^j w$, where $0\leq j\leq \ell-1$. Let $V_{m-\ell}=T[v]^{\perp}$. It has dimension $m-\ell$, whose basis vectors can map to any linear combination of the basis vectors of $W$. Similarly, let $W_{n-\ell}=(T'[w])^{\perp}$. It has dimension $n-\ell$, whose basis vectors can map to any linear combination of the basis vectors of $V$. There is however a constraint since $(f,g)$ is a nilpotent pair. So the restricted maps between $V_{m-\ell}$ and $W_{n-\ell}$ must form nilpotent pairs, say $(f',g')$. 
In terms of matrices, we have 
\[
f = 
{\renewcommand{\arraystretch}{1.10}
\begin{blockarray}{ccccccc}
& v & Tv & \ldots & T^{\ell-2}v &T^{\ell-1}v & V_{m-\ell}\\
\begin{block}{c(ccccc|c)}
w & 1 & 0 &\ldots & 0 & 0 & \BAmulticolumn{1}{c}{\multirow{5}{*}{$B$}}\\
T'w & 0 & 1 &\ldots & 0 & 0 & \\
\vdots & \vdots & \vdots & \ddots & \vdots & \vdots &  \\
(T')^{\ell-2}w & 0 & 0 &\ldots & 1 & 0 & \\
(T')^{\ell-1}w & 0 & 0 &\ldots & 0 & 1 & \\
\cline{2-7}
W_{n-\ell} & 0 & 0 &\ldots & 0 & 0 & f' \\ 
\end{block}
\end{blockarray}
}
\]
while 
\[
g = 
{\renewcommand{\arraystretch}{1.10}
\begin{blockarray}{ccccccc}
& w & T'w & \ldots & (T')^{\ell-2}w &(T')^{\ell-1}w  & W_{n-\ell} \\
\begin{block}{c(ccccc|c)}
v & 0 & 0 & \ldots & 0 & 0 & \BAmulticolumn{1}{c}{\multirow{5}{*}{$C$}}\\
Tv &1  & 0 &\ldots  & 0 & 0 & \\
\vdots & \vdots & \vdots & \ddots & \vdots &\vdots  & \\
T^{\ell-2}v & 0 & 0 & \ldots & 0 & 0 & \\
T^{\ell-1}v & 0 & 0 & \ldots & 1 & 0 & \\
\cline{2-7}
V_{m-\ell} & 0& 0 &\ldots &0 &0  & g'\\
\end{block}
\end{blockarray}
}\: ,
\]
where $B$ is an $\ell\times (m-\ell)$-block and $C$ is an $\ell \times (n-\ell)$-block. 
This give us $q^{\ell(m-\ell)}$, $q^{\ell(n-\ell)}$, 
and 
$\mathcal{N}_{m-\ell,n-\ell}$ choices from the $B$ and $C$ blocks and nilpotent pair $(f',g')$, respectively.
We thus obtain a total of: 
\begin{equation} \label{eq_end_of_proof}
|\mathcal{N}(m,n;\ell)| = \prod_{i=0}^{\ell -1} (q^m - q^i) \prod_{j=0}^{\ell -1} (q^n - q^j) q^{\ell(m-\ell)}q^{\ell(n-\ell)} \mathcal{N}_{m-\ell,n-\ell}.
\end{equation} 
Applying Theorem~\ref{thm_dim_nilpotent_pairs} to  
$\mathcal{N}_{m-\ell,n-\ell}$, we are done.
\end{proof}

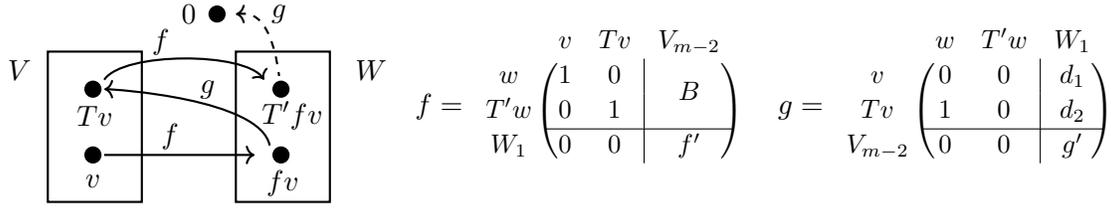
\begin{figure}
    \centering
\begin{tikzpicture}[scale=0.5,decoration={
    markings,
    mark=at position 0.5 with {\arrow{>}}}]

%%%%%%%%%%%%%%%%%%%%%%%%%%%%%%%%%%%%%%%%%%%%%%%%

\begin{scope}[shift={(0,0)}]
% \draw[thin,yellow] (0,0) grid (4,4);
% \draw[thin,green] (4,0) grid (8,4);

\node at (-0.75,3.5) {$V$};

\draw[thick] (0,0) rectangle (2.5,4);

\draw[thick,fill] (1.4,1.25) arc (0:360:2mm);

\node at (1.2,0.55) {$v$};

\node at (3.25,1.75) {$f$};

\draw[thick,->] (1.5,1.25) -- (5.5,1.25);

\draw[thick,fill] (1.4,3) arc (0:360:2mm);

\node at (1.25,2.3) {$Tv$};

\node at (3,4.25) {$f$};

\draw[thick,->] (1.5,3.25) .. controls (2.00,4) and (5.25,4) .. (5.75,3.25);

\node at (4.25,3) {$g$};

\draw[thick,<-] (1.5,3) .. controls (1.75,3) and (5.65,2.75) .. (5.90,1.5);

\draw[thick,dashed,<-] (5,5) .. controls (5.20,5) and (5.90,5.25) .. (6.15,3.35);

\node at (6.15,5) {$g$};

\draw[thick,fill] (4.70,5) arc (0:360:2mm);

\node at (3.75,5) {$0$};

\begin{scope}[shift={(5,0)}]

\node at (3.6,3.5) {$W$};

\draw[thick] (0,0) rectangle (2.5,4);

\draw[thick,fill] (1.4,1.25) arc (0:360:2mm);

\node at (1.25,0.5) {$fv$};

\draw[thick,fill] (1.4,3) arc (0:360:2mm);

\node at (1.5,2.3) {$T'fv$};

\end{scope}

\node at (14.0,2.5) {$f =$ \small 
$
{\renewcommand{\arraystretch}{1.10}
\begin{blockarray}{cccc}
  & v & Tv & V_{m-2}\\ 
\begin{block}{c(cc|c)}
w               & 1 & 0 & 
\BAmulticolumn{1}{c}{\multirow{2}{*}{$B$}} \\ 
T'w             & 0 & 1 &   \\ 
\cline{2-4}
W_1 & 0 & 0 &  f' \\ 
\end{block}
\end{blockarray}
}$};

\node at (23.75,2.5) {$g =$ \small 
${\renewcommand{\arraystretch}{1.10}
\begin{blockarray}{cccc}
  & w & T'w & W_1 \\ 
  \begin{block}{c(cc|c)}
v & 0 & 0 & d_1 \\ 
Tv & 1 & 0 & d_2 \\ 
\cline{2-4}
V_{m-2} & 0 & 0 & g' \\ 
  \end{block}
\end{blockarray}
}
$};

\end{scope}

%%%%%%%%%%%%%%%%%%%%%%%%%%%%%%%%%%%%%%%%%%%%%%%%

\end{tikzpicture}
    \caption{Computations for $\mathcal{N}(m,3;2)$ in Example~\ref{ex_m_3_2}, where $w=fv$,
    $V_{m-2} \simeq (T[v])^{\perp} \simeq \mathbb{F}_q^{m-2}$, $W_1\simeq (T'[fv])^{\perp} \simeq \mathbb{F}_q$, and $(f',g')$ is a nilpotent pair in 
    $\mathcal{N}_{m-2,1}$.}
    \label{fig_00008}
\end{figure}

\begin{example}
\label{ex_m_3_2}
Let $\dim V=m$, $\dim W=3$ and $\ell = 2$. See Figure~\ref{fig_00008}.
Then there are $q^m-1$ choices for $v$, $q^m-q$ choices for $Tv$, $q^3-1$ choices for $fv$, $q^3-q$ choices for $T'(fv)$, $q^{2(m-2)}$ choices for the $B$ block of $f$, $q^2$ choices for $d_1, d_2$, and $\mathcal{N}_{m-2, 3-2}$ choices for the nilpotent pair corresponding to the lower right blocks of $f$ and $g$. 
This results in
\begin{equation}
\begin{split}
|\mathcal{N}(m,3;2)| 
&= (q^m - 1)(q^m - q)(q^3-1)(q^3-q)q^{2(m-2)}q^2 \cdot \mathcal{N}_{m-2, 1}  \\ 
&=
(q^m - 1)(q^m - q)(q^3-1)(q^3-q)q^{2(m-2)}q^2(q^{2m-5}+q^{m-2}-q^{m-3}), 
\end{split}
\end{equation}
which holds for each $m\geq 0$.
\end{example}

\begin{figure}
    \centering
\begin{tikzpicture}[scale=0.5,decoration={
    markings,
    mark=at position 0.5 with {\arrow{>}}}]

%%%%%%%%%%%%%%%%%%%%%%%%%%%%%%%%%%%%%%%%%%%%%%%%%

\begin{scope}[shift={(0,0)}]
% \draw[thin,yellow] (0,0) grid (4,4);
% \draw[thin,green] (4,0) grid (8,4);

\node at (-0.75,3.5) {$V$};

\draw[thick] (0,0) rectangle (2.5,3.5);

\draw[thick,fill] (1.4,1.25) arc (0:360:2mm);

\node at (1.25,0.5) {$v$};

\node at (4,2.75) {$f$};

\draw[thick,->] (1.5,1.5) .. controls (2,2.25) and (5.25,2.25) .. (5.75,1.5);

\draw[thick,dashed,<-] (4.5,4.5) .. controls (4.75,4.5) and (5.75,4.75) .. (6,2);

\node at (5.75,4.5) {$g$};

\draw[thick,fill] (4.2,4.5) arc (0:360:2mm);

\node at (3.25,4.5) {$0$};

\node at (-1.5,2.5) {$\dim m$};

\node at (9,2.5) {$\dim 2$};

\begin{scope}[shift={(5,0)}]

\node at (3.75,3.5) {$W$};

\draw[thick] (0,0) rectangle (2.5,3.5);

\draw[thick,fill] (1.4,1.25) arc (0:360:2mm);

\node at (1.25,0.5) {$w$};

\end{scope}

\node at (16,1.5) {$f = \kbordermatrix{
  & v & V_{m-1} \\ 
w & 1 & B \\ 
W_{1} & 0 & f' 
}$};

\node at (24,1.5) {$g = \kbordermatrix{
  & w & W_{1} \\ 
v & 0 & c \\ 
V_{m-1} & 0 & g' 
}
$};

\end{scope}
%%%%%%%%%%%%%%%%%%%%%%%%%%%%%%%%%%%%%%%%%%%%%%%%

\begin{scope}[shift={(0,-6.5)}]
% \draw[thin,yellow] (0,0) grid (4,4);
% \draw[thin,green] (4,0) grid (8,4);

\node at (-0.75,3.5) {$V$};

\draw[thick] (0,0) rectangle (2.5,4);

\draw[thick,fill] (1.4,1.25) arc (0:360:2mm);

\node at (1.25,0.5) {$v$};

\node at (3.25,1.75) {$f$};

\draw[thick,->] (1.5,1.25) -- (5.5,1.25);

\draw[thick,fill] (1.4,3) arc (0:360:2mm);

\node at (3,4.25) {$f$};

\draw[thick,->] (1.5,3.25) .. controls (2.00,4) and (5.25,4) .. (5.75,3.25);

\node at (4.25,3) {$g$};

\draw[thick,<-] (1.5,3) .. controls (1.75,3) and (5.65,2.75) .. (5.90,1.5);

\draw[thick,dashed,<-] (5,5) .. controls (5.20,5) and (5.90,5.25) .. (6.15,3.35);

\node at (6.15,5) {$g$};

\draw[thick,fill] (4.70,5) arc (0:360:2mm);

\node at (3.75,5) {$0$};

\begin{scope}[shift={(5,0)}]

\node at (3.75,3.5) {$W$};

\draw[thick] (0,0) rectangle (2.5,4);

\draw[thick,fill] (1.4,1.25) arc (0:360:2mm);

\node at (1.25,0.5) {$w$};

\draw[thick,fill] (1.4,3) arc (0:360:2mm);

\end{scope}

\node at (15.25,1.5) {$f =$ \small 
$\begin{blockarray}{cccc}
                & v & Tv & V_{m-2}\\ 
    \begin{block}{c(cc|c)}
w               & 1 & 0  & \BAmulticolumn{1}{c}{\multirow{2}{*}{$D$}} \\ 
T'w             & 0 & 1  &  \\ 
    \end{block}
\end{blockarray}$
};

\node at (24,1.5) {$g =$ \small 
${\renewcommand{\arraystretch}{1.10} 
\begin{blockarray}{ccc}
  & w & T'w  \\ 
\begin{block}{c(cc)}
v & 0 & 0  \\ 
Tv & 1 & 0 \\
\cline{2-3}
V_{m-2} & 0 & 0 \\
\end{block}
\end{blockarray}
}
$};

\end{scope}

%%%%%%%%%%%%%%%%%%%%%%%%%%%%%%%%%%%%%%%%%%%%%%%%

\end{tikzpicture}
    \caption{Top row: computations for $\mathcal{N}(m,2;1)$ for Example~\ref{ex_f_g_l_m2}, where $w=fv$,
    $V_{m-1} \simeq \Span\{v\}^{\perp} \simeq \mathbb{F}_q^{m-1}$, $W_{1} \simeq \Span\{ w\}^{\perp} \simeq \mathbb{F}_q$,
    $B$ is a $1\times (m-1)$ block, $c$ is a single matrix coordinate, and $(f',g')$ is a nilpotent pair in $\mathcal{N}_{m-1, 1}$. 
    Second row: computations for $\mathcal{N}(m,2;2)$ for Example~\ref{ex_f_g_l_m2}, where $V_{m-2} \simeq (T[v])^{\perp} \simeq \mathbb{F}_q^{m-2}$, and $D$ is a $2\times (m-2)$ block.}
    \label{fig_00007}
\end{figure}
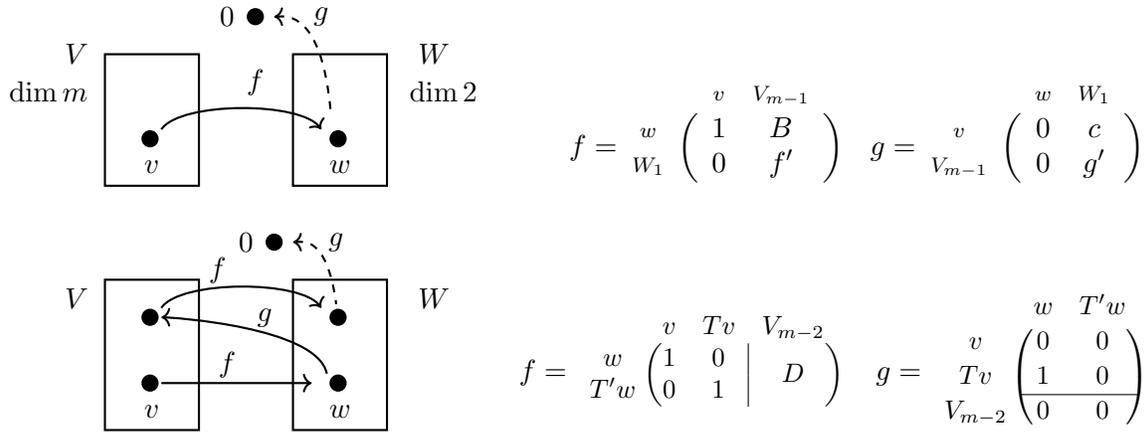
 
\begin{example}
\label{ex_f_g_l_m2}
Let $n=2$. 
Then $|\mathcal{N}(m,2;0)| = \mathcal{N}_{m,2} = q^{4m-2}+q^{3m}-q^{3m-2}$ by Theorem~\ref{thm_dim_nilpotent_pairs}. 
Next, 
$|\mathcal{N}(m,2;1)| 
= (q^m-1)(q^2-1)q^{m-1}q \cdot \mathcal{N}_{m-1, 1}
= (q^m-1)(q^2-1)q^{m}(q^{2(m-1)-1}+q^{m-1}-q^{m-2})$ by the first row in Figure~\ref{fig_00007} and Theorem~\ref{thm_dim_nilpotent_pairs}, 
where 
$\mathcal{N}_{m-1, 1}$ is the cardinality of the set of nilpotent pairs for the vector spaces $V_{m-1}$ and $W_{1}$. So there are $q^m-1$ choices for $v$, $q^2-1$ choices for $w$, $q^{m-1}$ choices for $B$ since $B$ is a $1\times (m-1)$ block, $q$ choices for $c$, and $|(m-1,1)|$ choices for the nilpotent pair $(f',g')$.

Next, we have 
$|\mathcal{N}(m,2;2)| = (q^m-1)(q^m-q)(q^2-1)(q^2-q)q^{2(m-2)}$ by the second row in Figure~\ref{fig_00007} since there are $q^m-1$ choices for $v$, $q^2-1$ choices for $w$, $q^m-q$ choices for $Tv$, $q^2-q$ choices for $T'w$, and $q^{2(m-2)}$ choices for the $D$ block.

This gives us $|\mathcal{N}(m,2;0)| +|\mathcal{N}(m,2;1)| +|\mathcal{N}(m,2;2)| = q^{4m}=|\Hom(V,W)|^2$, as expected.
\end{example}

% \MeeSeong{We learned that $\Delta$-matroids are a generalization of matroids and matroids include planar graphs. Perhaps discuss the correspondence between nilpotent matrices and $\Delta$-matroids? Make this correspondence precise. Or add this as a remark?}

\section{Appendix}
\label{section_appendix}
In this section, we provide two \texttt{Mathematica} codes for Proposition~\ref{prop_nilp_bool_semiring}, which produce the set of all pairs of $n\times n$ matrices that are nilpotent over the Boolean semiring. See Figure~\ref{fig_code_boolean}.

\begin{figure}[h]
\centering
\begin{verbatim}
n = 3;
powersOfMatrices = Tuples[{0, 1}, n^2] // Map[Partition[#, n] &];
pairs = Select[
   Table[{mat, Unitize[MatrixPower[mat, n]]}, {mat, powersOfMatrices}], 
   Last[#] === ConstantArray[0, {n, n}] &];
MatrixForm[pairs]
Length[pairs]
\end{verbatim}
\caption{This \texttt{Mathematica} code produces the set of all pairs of the form 
$(x,x^n)  \in \M_n(\mathbb{B})^2$ such that $(x,x^n) = (x,0)$, i.e., 
$x$ is a Boolean nilpotent operator.}
\label{fig_code_boolean}
\end{figure}

If we want to produce only the first coordinate in each pair of matrices in Figure~\ref{fig_code_boolean}, use the code in Figure~\ref{fig_nilpotent_mat_only}.

\begin{figure}[ht]
    \centering
\begin{verbatim}
n = 3;
powersOfMatrices = Tuples[{0, 1}, n^2] // Map[Partition[#, n] &];
nilpotentMatrices = 
  Select[powersOfMatrices, 
   Unitize[MatrixPower[#, n]] === ConstantArray[0, {n, n}] &];
nilpotentMatrices
Length[nilpotentMatrices]
\end{verbatim}
    \caption{This \texttt{Mathematica} code returns only  $n\times n$ nilpotent matrices over $\mathbb{B}$.}
    \label{fig_nilpotent_mat_only}
\end{figure}

%%%%%%%%%%%%%%%%%%%%%
%%
%%   REFERENCES 
%%
%%%%%%%%%%%%%%%%%%%%

\FloatBarrier

\bibliographystyle{amsalpha} 
\bibliography{nilpotent_finite}

@article {Lei21,
    AUTHOR = {Leinster, Tom},
     TITLE = {The probability that an operator is nilpotent},
   JOURNAL = {Amer. Math. Monthly},
  FJOURNAL = {American Mathematical Monthly},
    VOLUME = {128},
      YEAR = {2021},
    NUMBER = {4},
     PAGES = {371--375},
}

@article {Dol84,
    AUTHOR = {Dolgachev, I. and Goldstein, N.},
     TITLE = {On the {S}pringer resolution of the minimal unipotent
              conjugacy class},
   JOURNAL = {J. Pure Appl. Algebra},
  FJOURNAL = {Journal of Pure and Applied Algebra},
    VOLUME = {32},
      YEAR = {1984},
    NUMBER = {1},
     PAGES = {33--47},
}

@article {Ste76,
    AUTHOR = {Steinberg, Robert},
     TITLE = {On the desingularization of the unipotent variety},
   JOURNAL = {Invent. Math.},
  FJOURNAL = {Inventiones Mathematicae},
    VOLUME = {36},
      YEAR = {1976},
     PAGES = {209--224},
}

@article {CIKLR25,
    AUTHOR = {Chen, Weixi and Im, Mee Seong and Khovanov, Mikhail and Lillja, Catherine and Rugo, Nicolas},
     TITLE = {Pairs of eventually constant maps and nilpotent pairs},
   JOURNAL = {arXiv preprint \href{https://arxiv.org/abs/2512.03367}{arXiv:2512.03367}},
      YEAR = {2025},
     PAGES = {1--15},
}

@article {Spr76_trig,
    AUTHOR = {Springer, Tonny A.},
     TITLE = {Trigonometric sums, {G}reen functions of finite groups and
              representations of {W}eyl groups},
   JOURNAL = {Invent. Math.},
  FJOURNAL = {Inventiones Mathematicae},
    VOLUME = {36},
      YEAR = {1976},
     PAGES = {173--207},
}

@incollection {Gin97_geom,
    AUTHOR = {Ginzburg, Victor},
     TITLE = {Geometric methods in the representation theory of {H}ecke
              algebras and quantum groups},
 BOOKTITLE = {Representation theories and algebraic geometry ({M}ontreal,
              {PQ}, 1997)},
    SERIES = {NATO Adv. Sci. Inst. Ser. C: Math. Phys. Sci.},
 PUBLISHER = {Kluwer Acad. Publ., Dordrecht},
      YEAR = {1998},
    VOLUME = {514},
     PAGES = {127--183},
}

@article {FSS18,
    AUTHOR = {Fedorov, Roman and Soibelman, Alexander and Soibelman, Yan},
     TITLE = {Motivic classes of moduli of {H}iggs bundles and moduli of
              bundles with connections},
   JOURNAL = {Commun. Number Theory Phys.},
  FJOURNAL = {Communications in Number Theory and Physics},
    VOLUME = {12},
      YEAR = {2018},
    NUMBER = {4},
     PAGES = {687--766},
}

@incollection {BGGH18,
    AUTHOR = {Bradlow, Steven and Garc\'ia-Prada, Oscar and Gothen, Peter
              and Heinloth, Jochen},
     TITLE = {Irreducibility of moduli of semi-stable chains and
              applications to {${\rm U}(p,q)$}-{H}iggs bundles},
 BOOKTITLE = {Geometry and physics. {V}ol. {II}},
     PAGES = {455--470},
 PUBLISHER = {Oxford Univ. Press, Oxford},
      YEAR = {2018},
}

@article {HH22,
    AUTHOR = {Hausel, Tam\'as and Hitchin, Nigel},
     TITLE = {Very stable {H}iggs bundles, equivariant multiplicity and
              mirror symmetry},
   JOURNAL = {Invent. Math.},
  FJOURNAL = {Inventiones Mathematicae},
    VOLUME = {228},
      YEAR = {2022},
    NUMBER = {2},
     PAGES = {893--989},
}

@article {FH58,
    AUTHOR = {Fine, N. J. and Herstein, I. N.},
     TITLE = {The probability that a matrix be nilpotent},
   JOURNAL = {Illinois J. Math.},
  FJOURNAL = {Illinois Journal of Mathematics},
    VOLUME = {2},
      YEAR = {1958},
     PAGES = {499--504},
}

@article {AS90,
    AUTHOR = {Moh'd Z. Abu-Sbeih},
     TITLE = {On the number of spanning trees of Kn and Km, n},
   JOURNAL = {Discrete Mathematics},
    VOLUME = {84},
    ISSUE = {2},
      YEAR = {1990},
     PAGES = {205--207},
}

@article{DK25,
      AUTHOR = {Deepthi, Nayana Shibu and Kumar, Chanchal},  
      TITLE = {Combinatorial {I}dentities {U}sing the {M}atrix {T}ree {T}heorem}, 
      JOURNAL = {arXiv preprint \href{https://arxiv.org/abs/2504.21319}{arXiv:2504.21319}},
      YEAR = {2025},
      PAGES = {1--13},
}

@article{Pak09,
      AUTHOR = {Pak, Igor},  
      TITLE = {Tree bijections}, 
      JOURNAL = {IPAM workshop, \href{https://www.math.ucla.edu/~pak/lectures/bij-trees-talk1.pdf}{https://www.math.ucla.edu/\urltilde pak/lectures/bij-trees-talk1.pdf}},
      YEAR = {2009},
      PAGES = {1--23},
}

@article {Butler1972,
author = {Butler, Kim Ki-Hang},
title = {The number of idempotents in (0,1)-matrix semigroups},
journal = {Linear Algebra and its Applications},
volume = {5},
number = {3},
year = {1972},
pages = {233-246},
}

@misc{oeisA003024,
  author = {{OEIS Foundation Inc.}},
  title = {The {O}n-{L}ine {E}ncyclopedia of {I}nteger {S}equences {A}003024},
  year = {2025},
  note = {Founded in 1964 by N.J.A. Sloane \href{https://oeis.org/A003024}{https://oeis.org/A003024}},
  url = {https://oeis.org/A003024}
}

@article {FS58,
    AUTHOR = {Fiedler, Miroslav and Sedláček, Jiří},
     TITLE = {On W-bases of directed graphs},
   JOURNAL = {Časopis pro pěstování matematiky},
    VOLUME = {83},
      YEAR = {1958},
     PAGES = {214-225},
}

@article {ILW_Proc_AMS,
    AUTHOR = {Im, Mee Seong and Lai, Chun-Ju and Wilbert, Arik},
     TITLE = {Irreducible components of two-row {S}pringer fibers for all
              classical types},
   JOURNAL = {Proc. Amer. Math. Soc.},
  FJOURNAL = {Proceedings of the American Mathematical Society},
    VOLUME = {150},
      YEAR = {2022},
    NUMBER = {6},
     PAGES = {2415--2432},
}

@article {ILWquiver,
    AUTHOR = {Im, Mee Seong and Lai, Chun-Ju and Wilbert, Arik},
     TITLE = {A study of irreducible components of {S}pringer fibers using
              quiver varieties},
   JOURNAL = {J. Algebra},
  FJOURNAL = {Journal of Algebra},
    VOLUME = {591},
      YEAR = {2022},
     PAGES = {217--248},
}

@MISC{ILW19,
    author = {Im, Mee Seong and Lai, Chun-Ju and Wilbert, Arik},
    title = {Irreducible components of two-row {S}pringer fibers and {N}akajima quiver varieties},
    howpublished = {\url{https://arxiv.org/abs/1910.03010}},
    year = {2019}
}

@book{CCPS11,
    TITLE = {Combinatorial {O}ptimization},
   AUTHOR = {Cook, William J. and Cunningham, William H. and Pulleyblank, William R. and Schrijver, Alexander},
   SERIES = {Wiley Series in Discrete Mathematics and Optimization},
     YEAR = {2011},
   VOLUME = {33},
PUBLISHER = {John Wiley \& Sons}
}

@book{BM76,
    TITLE = {Graph {T}heory with {A}pplications},
   AUTHOR = {Bondy, John A. and Murty, Uppaluri  S.R.},
     YEAR = {1976},
PUBLISHER = {American Elsevier Publishing Company},
    PAGES = {1--264},
}

@article {A_Joyal81,
    AUTHOR = {Joyal, André},
     TITLE = {Une théorie combinatoire des séries formelles},
   JOURNAL = {Advances in Mathematics 42
    \href{https://www.sciencedirect.com/science/article/pii/0001870881900529}},
      YEAR = {1981},
     PAGES = {16},
}

@article {SW12,
    AUTHOR = {Stroppel, Catharina and Webster, Ben},
     TITLE = {2-block {S}pringer fibers: convolution algebras and coherent
              sheaves},
   JOURNAL = {Comment. Math. Helv.},
  FJOURNAL = {Commentarii Mathematici Helvetici. A Journal of the Swiss
              Mathematical Society},
    VOLUME = {87},
      YEAR = {2012},
    NUMBER = {2},
     PAGES = {477--520},
}

@book {CG97,
    AUTHOR = {Chriss, Neil and Ginzburg, Victor},
     TITLE = {Representation theory and complex geometry},
    SERIES = {Modern Birkh\"auser Classics},
      NOTE = {Reprint of the 1997 edition},
 PUBLISHER = {Birkh\"auser Boston Inc.},
   ADDRESS = {Boston, MA},
      YEAR = {2010},
     PAGES = {x+495},
}

@article {MR3836769,
    AUTHOR = {Im, Mee Seong},
     TITLE = {The regular semisimple locus of the affine quotient of the
              cotangent bundle of the {G}rothendieck-{S}pringer resolution},
   JOURNAL = {J. Geom. Phys.},
  FJOURNAL = {Journal of Geometry and Physics},
    VOLUME = {132},
      YEAR = {2018},
     PAGES = {84--98},
}

@book {MR3312842,
    AUTHOR = {Im, Mee Seong},
     TITLE = {On semi-invariants of filtered representations of quivers and
              the cotangent bundle of the enhanced {G}rothendieck-{S}pringer
              resolution},
      NOTE = {Thesis (Ph.D.)--University of Illinois at Urbana-Champaign},
 PUBLISHER = {ProQuest LLC, Ann Arbor, MI},
      YEAR = {2014},
     PAGES = {134},
}

@article {Im_Scrimshaw_parabolic,
    AUTHOR = {Im, Mee Seong and Scrimshaw, Travis},
     TITLE = {The regularity of almost-commuting partial
              {G}rothendieck-{S}pringer resolutions and parabolic analogs of
              {C}alogero-{M}oser varieties},
   JOURNAL = {J. Lie Theory},
  FJOURNAL = {Journal of Lie Theory},
    VOLUME = {31},
      YEAR = {2021},
    NUMBER = {1},
     PAGES = {127--148},
}

@incollection {Q,
    AUTHOR = {Quillen, Daniel},
     TITLE = {Higher algebraic {$K$}-theory. {I}},
 BOOKTITLE = {Algebraic {$K$}-theory, {I}: {H}igher {$K$}-theories ({P}roc.
              {C}onf., {B}attelle {M}emorial {I}nst., {S}eattle, {W}ash.,
              1972)},
    SERIES = {Lecture Notes in Math.},
    VOLUME = {Vol. 341},
     PAGES = {85--147},
 PUBLISHER = {Springer, Berlin-New York},
      YEAR = {1973},
}

@book {Jac89,
    AUTHOR = {Jacobson, Nathan},
     TITLE = {Basic algebra. {II}},
   EDITION = {Second},
 PUBLISHER = {W. H. Freeman and Company, New York},
      YEAR = {1989},
     PAGES = {xviii+686},
}

\end{document}